\newcommand{\subscript}[2]{$#1 _ #2$}
\newcommand{\concatenate}[2]{#1.#2}
\newenvironment{tight_itemize}{
\begin{itemize}
  \setlength{\itemsep}{0pt}
  \setlength{\parskip}{0pt}
}{\end{itemize}}
\newenvironment{tight_enumerate}{
\begin{enumerate}
  \setlength{\itemsep}{0pt}
  \setlength{\parskip}{0pt}
}{\end{enumerate}}
\DeclareMathOperator*{\argmin}{arg\,min}
\DeclareMathOperator*{\supess}{ess\,sup}
\newcommand*{\transp}[2][-1mu]{\ensuremath{\mskip1mu\prescript{\smash{\mathrm t \mkern#1}}{}{\mathstrut#2}}}
\DeclareMathOperator{\Tr}{Tr}
\newcommand{\norm}[1]{{\vert\kern-0.25ex\vert #1 
  \vert\kern-0.25ex\vert}}
\newcommand{\bignorm}[1]{{\big\vert\kern-0.25ex\big\vert #1 
  \big\vert\kern-0.25ex\big\vert}}
\newcommand{\opnorm}[1]{{\vert\kern-0.25ex\vert\kern-0.25ex\vert #1 
  \vert\kern-0.25ex\vert\kern-0.25ex\vert}}
\newcommand{\1}{\mathbbm{1}}
\newcommand{\lint}{[\![} 
\newcommand{\rint}{]\!]} 
\newcommand{\setN}{\mathbb{N}}
\newcommand{\salA}{\mathcal{A}}
\newcommand{\salC}{\mathcal{C}}
\newcommand{\salH}{\mathcal{H}}
\newcommand{\salL}{\mathcal{L}}
\newcommand{\salS}{\mathcal{S}}
\newcommand{\salX}{\mathcal{X}}
\newcommand{\zproba} { \mathbb{P} }
\newcommand{\ieproba}[3] { \mathbb{P}_{ #1 }^{ #2 }  ( #3  ) }
\newcommand{\zEX} { \mathbb{E} }
\newcommand{\EX}[1] { \mathbb{E}  [ #1  ] }
\newcommand{\ieEX}[3] { \mathbb{E}_{ #1 }^{ #2 }  [ #3  ] }
\DeclareMathOperator{\Cov}{Cov}
\DeclareMathOperator{\Var}{Var}
\newcommand{\gaussian}[2]{\mathcal{N}( #1, #2 )}
\newtheorem{theorem}{Theorem}
\newtheorem{definition}{Definition}
\newtheorem{assumption}{Assumption}
\newtheorem{proposition}{Proposition}
\newtheorem{lemma}{Lemma}
\begin{document}

\date{\today}

\title[Estimation of the Hurst parameter under high-frequency asymptotic]{Optimal estimation of the rough Hurst parameter in additive noise}

\author{Grégoire Szymanski}

\address{Grégoire Szymanski, Ecole Polytechnique, CMAP,
route de Saclay,
91128 Palaiseau
}
\email{gregoire.szymanski@polytechnique.edu}

\begin{abstract} 
We estimate the Hurst parameter $H \in (0,1)$ of a fractional Brownian motion from discrete noisy data, observed along a high frequency sampling scheme.
When the intensity $\tau_n$ of the noise is smaller in order than $n^{-H}$ we establish the LAN property with optimal rate $n^{-1/2}$. Otherwise, we establish that the minimax rate of convergence is  $(n/\tau_n^2)^{-1/(4H+2)}$ even when $\tau_n$ is of order 1. Our construction  of an optimal procedure relies on a Whittle type construction possibly pre-averaged, together with techniques developed in Fukasawa {\it et al.} \cite{fukasawa2019volatility}. We establish in all cases a central limit theorem with explicit variance, extending the classical results of Gloter and Hoffmann \cite{gloter2007estimation}.
\end{abstract}

\maketitle

\noindent \textbf{Mathematics Subject Classification (2020)}: 62F12, 62M09, 62M10, 62M15.

\noindent \textbf{Keywords}: Scaling exponent; High frequency data; Fractional Brownian motion; Whittle estimator; LAN Property.

\tableofcontents

\section{Introduction}

\subsection{Setting}
A fractional Brownian motion $W^H$ with Hurst index $H$ with $0 < H< 1$ is the unique self-similar Gaussian process with stationary increments such that 
$$\Cov(W^H_t, W^H_s) = \frac{1}{2}(t^{2H} + s^{2H} - |t-s|^{2H}).$$
Suppose we observe at discrete time a blurred version of the process $X_t = \sigma W^H_t$, where $\sigma > 0$: we have data at times $i/n, i= 0, \dots, n$ where each observation is contaminated by a noise. More precisely, we observe
\begin{align}
\label{eq:intro:model}
X_i^n = \sigma W^H_{i/n} + \tau_n \varepsilon_{i}^n,\;\;0 \leq i \leq n,
\end{align}
where the $\varepsilon_{i}^n$ are independent standard Gaussian variables independent from $W^H$ and $\tau_n$ is the noise level. 

The increments $\mathbf{Y}^n = (X_{i}^n-X_{i-1}^n)_{1 \leq i \leq n}$  of the observations $(X_i^n)_{0 \leq i \leq n}$ form a stationary Gaussian sequence, and in some parts of the paper,  we will actually consider a more general framework, where we observe
a stationary sequence $\mathbf{Y}^n$ having spectral density 
\begin{align*}
f_{H, \sigma}^{n}(\lambda) = \frac{\sigma^2}{n^{2\gamma(H)}} f_H(\lambda) + \tau_n^2 l(\lambda)
\end{align*}
where $\gamma(H)$ is explicit and $f_H$ and $l$ are explicit functions. In the case of \eqref{eq:intro:model}, the self similarity of the fractional Brownian motion gives $\gamma(H) = H$.\\

From these observations, our goal is to recover $H$ and $\sigma$.

\subsection{Main results and organization of the paper}

Several asymptotics are considered, depending on whether $\tau_n n^{\gamma(H)} \to 0$ or not.\\

In Section \ref{subsec:case1:estimator} we show how the method of \cite{fukasawa2019volatility} can readily be extended to the case $\tau_n n^{\gamma(H)} \to 0$: we obtain a consistent estimator $(\widehat{H}_n, \widehat{\sigma}_n)$ of $(H,\sigma)$ such that moreover:
\begin{align*}
\begin{pmatrix}
\sqrt{n}
( \widehat{H}_n - H)
\\
\frac{\sqrt{n}
}{\log n}
(
\widehat{\sigma}_n - \sigma
)
\end{pmatrix}
\to 
\begin{pmatrix}
X
\\
\sigma \gamma '( H ) X
\end{pmatrix}
\end{align*}
in distribution,  where $X$ is a Gaussian variable with explicit variance.  We thus have a central limit theorem with degenerate covariance matrix and unusual convergence rates. Extending the techniques of \cite{brouste2018lan}, we prove that these models satisfy the LAN property (Local Asymptotic Normality) in the sense of Ibragimov and Hasminskii. This property enables us to prove that this convergence rate together with the asymptotic variances are indeed optimal in a minimax sense, thanks to Hajek's convolution theorem (we refer to the classical textbook \cite{ibragimov2013statistical}).\\

In the second part of the paper, we relax the hypothesis $\tau_n n^{\gamma(H)} \to 0$ and treat in particular the most interesting case when $\tau_n$ of order $1$. The problem becomes substantially more intricate and is closer in spirit to the works of Gloter and Hoffmann \cite{gloter2007estimation}, and more generally statistics of random processes under additive noise (see for instance \cite{jacod2009microstructure, hoffmann2012adaptive} and the references therein). We restrict ourselves to the original model \eqref{eq:intro:model} and put $\tau_n=1$ for simplicity (and with no significant loss of generality). 
We use a pre-averaging technique suitably tuned by a two steps procedure in order to build a consistent estimator $(\widehat{H}^{\mathrm{opt}}_n, \widehat{\sigma}^{\mathrm{opt}}_n)$ that moreover satisfies
\begin{align*}
\begin{pmatrix}
n^{1/(4H+2)}
( \widehat{H}^{\mathrm{opt}}_n - H)
\\
\frac{n^{1/(4H+2)}
}{\log n}
(
\widehat{\sigma}^{\mathrm{opt}}_n - \sigma
)
\end{pmatrix}
\to 
\begin{pmatrix}
c_1(H) X
\\
c_2(H) X
\end{pmatrix}
\end{align*}
with explicit (and non-degenerate) constants $c_i(H)$. We recover in particular the speed $n^{1/(4H+2)}$ for every $0 < H < 1$ and we strictly extend the techniques developed in \cite{gloter2007estimation} to show that this rate is indeed optimal even for $H < 1/2$. The same procedure with an arbitrary $\tau_n$ would yield to the convergence rate $(n/\tau_n^2)^{1/(4H+2)}$.

\subsection{Relation to other works}

The estimation the Hurst parameter of a diffusion is a long-standing problem. Long-range dependence in Gaussian series were studied in depth by Fox and Taqqu in \cite{fox1987central, fox1986large} through Whittle estimators. Fractional Brownian motion with Hurst index $H>\frac{1}{2}$ shares these properties and is therefore encompassed in this framework. Istas and Lang  \cite{istas1997quadratic} put forward a new strategy by using  instead a more flexible estimator based on energy levels or quadratic variations across times scales. These were studied more precisely in \cite{coeurjolly2001estimating} in the specific case of the fractional Brownian motion with Hurst index $0<H<1$.\\ 

However, there is no observation noise in all these approaches, which is perhaps unrealistic in practice.  In \cite{gloter2007estimation}, model \eqref{eq:intro:model} is studied precisely with $\tau_n = 1$ and under the crucial assumption $H > 1/2$. A wavelet estimator based on energy levels thresholded as an optimal scale is shown to converge with the unusual rate $n^{-1/(4H+2)}$ and this rate is proved to be asymptotically minimax. The question whether this result holds in greater generality has been left open since. This is due in particular to the fact that when $H<1/2$, the Riemann approximations of the wavelet coefficients use by Gloter and Hoffmann are not sufficiently accurate. Our method overcomes this defect.\\

Closely related to the work of \cite{gloter2007estimation} is the estimation of the smoothness of the volatility in finance, as initiated by \cite{rosenbaum2008estimation}. 
A classic way to model the behaviour of the price of a financial asset $S$ is to use a continuous process of the form
\begin{align*}
dS_t = S_t ( \mu_t dt + \sigma_t dB_t).
\end{align*}
where $\mu_t$ is the drift and $\sigma_t$ the volatility. This diffusion is driven by a Brownian Motion $B$. The main subject of interest in financial markets is the volatility. Even if it was first thought to be constant (see \cite{black1973pricing}, it is now widely accepted that it is better modelled as a stochastic process, for both financial and empirical reasons. The groundbreacking result of Gatheral, Jaisson and Rosenbaum \cite{gatheral2018volatility} established that $\sigma_t$ follows a rough dynamic.  In \cite{gatheral2018volatility}, the authors propose the volatility model
\begin{align}
\label{eq:intro:rfsv}
\sigma_t = \exp \big(m + \int_{-\infty}^t e^{-\alpha(t-s)}dW_s^H\big)
\end{align}
where the integral is a pathwise Riemann–Stieltjes integral as explained in \cite{cheridito2003fractional}. Taking $H \approx 0.1$ is consistent to a number of empirical evidences that cannot be fitted within the Markov framework previously used in the literature. Since then, increasing evidence has been found for rough volatility, see \cite{bennedsen2016decoupling, jusselin2020noarbitrage, livieri2018rough} and some alternative models have been proposed, see \cite{bayer2016pricing, eleuch2019roughening}. However, as noted in \cite{fukasawa2019volatility}, the fits $H \approx 0.1$ obtained in \cite{gatheral2018volatility} should not be understood as statistical estimators, but rather as calibrated best fits in order to model the data.

But inferring the Hurst parameter from market data is a bit more complex since volatility is not directly observable. Instead, we can only access the asset price at discrete times, say at time $i/n$, and a proxy of the instant volatility is used, such as
\begin{align*}
\sigma^{2}_{i/n} \approx n (S_{(i+1)/n} - S_{i/n})^2.
\end{align*}

Using this proxy lead to a multiplicative noise instead of the additive noise of model \eqref{eq:intro:model}. In the long memory case, i.e. when $H>\frac{1}{2}$, Rosenbaum developed in \cite{rosenbaum2008estimation} an efficient estimator  in model \eqref{eq:intro:rfsv} extending the previous work of Gloter and Hoffmann \cite{ gloter2007estimation}. This estimator also has the unusual rate $n^{-1/(4H+2)}$ and this rate is optimal in this model. Several attempts were made in these directions in the past years, see {\it e.g.} \cite{fukasawa2019volatility} or  \cite{bolko2020GMM} in order to extend these results to the rough case $0 < H < 1/2$, and in particular with a rough volatility index $H \approx 0.1$.  However, all these approaches have to make significant simplifications or extraneous assumptions and a complete picture yet needs to be found. Other approaches include the estimation of the smoothness of random curves from several independent observations (see \cite{golovkine2022learning}). Noisy continuous observations of a fractional Brownian motion could also be considered as in \cite{dozzi2013asymptotic, chigansky2022estimation}. When polluted with a (continuous) noise os size $\varepsilon$, the unusual rate $\varepsilon^{1/(4H+2)}$ also appears to be optimal (see \cite{chigansky2022estimation}).\\  

In this paper, we remain more modest, keeping-up to the additive case \eqref{eq:intro:model}, but we give a complete picture of the situation that considerably generalises the results of \cite{gloter2007estimation} by extending their work to rough exponents and by showing central limit theorems for our estimators.

\section{Fast decaying noise}
\label{sec:case1}

This corresponds to the case $\tau_n n^{\gamma(H)} \to 0$.

\subsection{Construction of the model}

Let $0 < \sigma_- < \sigma_+ < \infty$, $0 < H_- < H_+ < 1$ and $\Theta = [H_-, H_+] \times [\sigma_-, \sigma_+]$. We write $\theta = (H, \sigma)$ the elements of $\Theta$. For each $n$, consider a probability space $( \Omega^n, \salA^n )$ on which we define random variables $\mathbf{Y}^{n} = ( Y_1^{n}, \dots, Y_n^{n})$ and a family of probability measures $\zproba_{H, \sigma}^{n}$ for each $(H, \sigma) \in \Theta$ such that $\mathbf{Y}^{n}$ is a Gaussian process with spectral density function 
\begin{align}
\label{eq:decaying:psd}
f_{H, \sigma}^{n}(\lambda) = \frac{\sigma^2}{n^{2\gamma(H)}} f_H(\lambda) + \tau_n^2 l(\lambda)
\end{align}
under $\zproba_{H, \sigma}^{n}$, where $f_H$ and $l$ are two known spectral densities and $\gamma: [ H_-, H_+  ] \rightarrow (0,\infty)$ is increasing and differentiable. We also need the following regularity properties for $f_H$.

\begin{assumption}
\label{assumption:decaying:psd}
The spectral density $f_H$ satisfies the following properties:
\begin{enumerate}[label=(\subscript{H}{{\arabic*}}), ref=(\subscript{H}{{\arabic*}})]
  \setlength{\itemsep}{0pt}
  \setlength{\parskip}{0pt}
\item \label{assumption:decaying:psd:definition}
$\lambda \in [-\pi,\pi] \backslash \{ 0 \} \mapsto  f_H(\lambda)$ is even, nonnegative and integrable for every $H \in [H_- ,H_+]$.
\item\label{assumption:decaying:psd:regularity} $(\lambda, H) \mapsto f_H(\lambda) \in \salC^{3,2} (  [ H_-, H_+  ] \times [-\pi,\pi] \backslash \{ 0 \} )$.
\item \label{assumption:decaying:psd:identifiability}
If $( H_1, \sigma_1 ),( H_2, \sigma_2 )$ are two distinct elements of $ [ H_-, H_+  ] \times [\sigma_-,\sigma_+]$, then the set $\{\lambda \in [-\pi, \pi]:\; \sigma_1^2 f(\lambda, H_1) \neq \sigma_2^2 f(\lambda, H_2)\}$ has positive Lebesgue measure.
\item \label{assumption:decaying:psd:bounds}
There exists  a continuous and increasing function $\alpha :  [ H_-, H_+  ] \to (-1,1)$ such that
\begin{align*}
c_1  | \lambda  |^{-\alpha(H)}
\leq 
f(\lambda, H)
\leq 
c_2  | \lambda  |^{-\alpha(H)}.
\end{align*}
for some $c_1, c_2 >0$ and every 
$( H, \lambda ) \in  [ H_-, H_+  ] \times [-\pi, \pi] \backslash \{ 0 \}$.

\item \label{assumption:decaying:psd:bounds_derf}
For any $r > 0$, there exists $c_3 = c_3(r)$ such that \begin{align*}
 |
\partial_\lambda^\ell \partial_H^j f_H(\lambda)
|
\leq \frac{c_3(r)}{ | \lambda  |^{\alpha(H) +\ell + r}}.
\end{align*}
for every $( H, \lambda ) \in  [ H_-, H_+  ] \times [-\pi, \pi] \backslash \{ 0 \}$
and $j=0,1,2,3, \ell=0,1,2$.

\end{enumerate}
\end{assumption}

Assumption \ref{assumption:decaying:psd} is satisfied in most cases, see {\it e.g.} \cite{fukasawa2019asymptotically} when $Y_i^{n}$ is the increment of a fractional Brownian motion. Eventually, we require the noise spectral density $l$ to be controlled near $0$. This is used when applying results that control the convergence of Gaussian quadratic forms, 
see for instance \cite{fox1986large} and \cite{fox1987central}.

\begin{assumption}
\label{assumption:decaying:noise}
There exists $c_4 > 0$ such that
$| l(\lambda)| \leq c_4 \lambda ^2 $ for any $-\pi \leq \lambda \leq \pi$.
\end{assumption}

Assumption \ref{assumption:decaying:noise} can be relaxed (at a slight notational technical cost) as follows: we may allow $l$ to depend of $(H, \sigma)$ too (but not $c_4$). The construction of the estimator in section \ref{subsec:case1:estimator} then has to be changed accordingly.

\subsection{Construction of the estimator}
\label{subsec:case1:estimator}

As explained in \cite{brouste2018lan}, \cite{fukasawa2019asymptotically} and \cite{fukasawa2019volatility}, the effects of $H$ and $\sigma$ interplay in the behaviour of estimators asymptotically as $n \rightarrow \infty$: slight variations of $H$ can be offset by slight variations of $\sigma$, resulting in degenerate Fisher information matrix, see \cite{kawai2013fisher}. To correct these effects, we use techniques developed in \cite{fukasawa2019volatility} using a reparametrisation of our model. 
We introduce a new parameter $\nu$ defined as
\begin{align*}
\nu = n^{-\gamma(H)} \sigma \in  [ n^{-\gamma(H_+)}\sigma_-, n^{-\gamma(H_-)}\sigma_+ ].
\end{align*}
We will infer $(H, \sigma)$ through the reparametrisation $(H,\nu)$ and recover $\sigma$ via $\sigma = \nu n^{\gamma(H)}$. Therefore we define the spectral density of $\mathbf{Y}^{n}$ with respect to $( H, \nu )$ instead of $( H, \sigma )$ by setting:
\begin{align*}
g_{H, \nu}^{n}(\lambda) = \nu^2 f_H(\lambda) + \tau_n^2 l(\lambda),
\end{align*}
and introduce a Whittle (negative) contrast with respect to parameters $( H,\nu )$ defined as
\begin{align*}
U_n 
( 
H, \nu
)
=
\frac{1}{4\pi}
\int_{-\pi}^\pi
\log 
( 
g^n_{H,\nu}(\lambda)
)
+
\frac{
I_n( \lambda, \mathbf{Y}^n )
}{
g^n_{H,\nu}(\lambda)
}
d\lambda
\end{align*}
where 
$$I_n( \lambda, \mathbf{Y}^{n} ) = n^{-1}  | \sum_{k=1}^n e^{ik\lambda} Y_k^{n}  |^2$$ is the periodogram associated to observations $\mathbf{Y}^n$.  
We define an estimator of the parameters $( H, \nu )$ through the minimisation of the Whittle contrast
\begin{align}
\label{eq:decaying:wle}
( \widehat{H}_n, \widehat{\nu}_n ) = \argmin U_n (H, \nu)
\end{align}
where the minimum is taken over all $( H, \nu ) \in [H_-, H_+]\times  [ n^{-\gamma(H_+)}\sigma_-, n^{-\gamma(H_-)}\sigma_+ ]$. Since our parameter of interest is $\sigma = n^{\gamma( H)} \nu$, we naturally define $\widehat{\sigma}_n = n^{\gamma( \widehat{H}_n )}\widehat{\nu}_n$. 

\begin{theorem}
\label{thm:decaying}
Work under Assumptions \ref{assumption:decaying:psd} and \ref{assumption:decaying:noise} and suppose in addition that
$( n^{\gamma( H_+ )} \tau_n )_n$ is bounded.
Let $( H, \sigma) $ be in the interior of $\Theta$. Then 
\begin{align*}
\begin{pmatrix}
\sqrt{n}
( \widehat{H}_n - H)
\\
\frac{\sqrt{n}
}{\log n}
(
\widehat{\sigma}_n - \sigma
)
\end{pmatrix}
\to 
\begin{pmatrix}
X
\\
\sigma \gamma '( H ) X
\end{pmatrix}
\end{align*}
under $\zproba_{H, \sigma}^n$ in distribution, where $X$ is a centred Gaussian variable with variance
\begin{align*}
2
\bigg(			\frac{1}{2\pi}
			\int_{-\pi}^\pi 
			\Big(
			\frac{\partial_H f_{H} (\lambda) }
			{ f_{H}(\lambda) } \Big)^2 d\lambda
		-
			\Big(
			\frac{1}{2\pi}
			\int_{-\pi}^\pi 
	\frac{\partial_H f_{H} (\lambda) }{ f_{H}(\lambda) } d\lambda
			\Big)^2
\bigg)^{-1}.
\end{align*}
\end{theorem}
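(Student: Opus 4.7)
The plan is to analyse the Whittle contrast in the reparametrised coordinates $(H,\nu)$ and then transfer back to $(H,\sigma)$ via the deterministic map $\sigma = n^{\gamma(H)}\nu$, which is where the $\log n$ scaling and the degenerate limit come from. Write $\nu_n = n^{-\gamma(H)}\sigma$ for the true value of the auxiliary parameter. Under the assumption that $n^{\gamma(H_+)}\tau_n$ is bounded and since $H$ lies in the interior, we have $\tau_n n^{\gamma(H)} \to 0$, so the noise contribution $\tau_n^2 l(\lambda)$ in $g_{H,\nu}^n$ is small compared to $\nu_n^2 f_H$ in the relevant spectral range. This will make the analysis essentially a perturbation of the classical Whittle estimator.

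First I would establish consistency of $(\widehat{H}_n,\widehat{\nu}_n/\nu_n)$ towards $(H,1)$. The usual Whittle contrast argument applies: \ref{assumption:decaying:psd:identifiability} provides identifiability, \ref{assumption:decaying:psd:bounds} and \ref{assumption:decaying:psd:bounds_derf} ensure uniform control, and Assumption \ref{assumption:decaying:noise} together with $\tau_n n^{\gamma(H)} \to 0$ makes the $\tau_n^2 l$ term negligible in the limit. Then I would Taylor-expand the first-order condition $\nabla U_n(\widehat{H}_n,\widehat{\nu}_n)=0$ around $(H,\nu_n)$. The score is a standard Whittle score
\begin{align*}
-\partial_\theta U_n(H,\nu_n) = \frac{1}{4\pi}\int_{-\pi}^{\pi} \frac{\partial_\theta g^n_{H,\nu_n}(\lambda)}{g^n_{H,\nu_n}(\lambda)}\Big(\frac{I_n(\lambda,\mathbf{Y}^n)}{g^n_{H,\nu_n}(\lambda)}-1\Big)d\lambda,
\end{align*}
which is a centred Gaussian quadratic form (up to lower order). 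Using the Fox--Taqqu type CLT for such forms as in \cite{fox1987central} -- extended to the perturbed density $g^n_{H,\nu_n}$ via Assumption \ref{assumption:decaying:noise} and the uniform bounds \ref{assumption:decaying:psd:bounds}--\ref{assumption:decaying:psd:bounds_derf} -- I would show that $\sqrt{n}\,\partial_H U_n$ and $\sqrt{n}\,\nu_n\partial_\nu U_n$ are jointly asymptotically Gaussian with covariance given by the Whittle Fisher information matrix
\begin{align*}
J(H) = \frac{1}{4\pi}\int_{-\pi}^{\pi}\begin{pmatrix} (\partial_H f_H/f_H)^2 & (2/\nu_n)(\partial_H f_H/f_H)\\ (2/\nu_n)(\partial_H f_H/f_H) & 4/\nu_n^2\end{pmatrix}d\lambda.
\end{align*}
The Hessian, suitably rescaled, converges in probability to the same matrix.

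Inverting $J(H)$ yields $\sqrt{n}(\widehat{H}_n-H)\to X$ in distribution with variance given by the stated Schur complement $(J_{HH}-J_{H\nu}^2/J_{\nu\nu})^{-1}$, which after simplification is exactly the expression in the theorem. Jointly, $\sqrt{n}(\widehat{\nu}_n-\nu_n)/\nu_n$ converges to a Gaussian variable correlated with $X$, and in particular $\sqrt{n}(\widehat{\nu}_n-\nu_n) = O_p(\nu_n) = O_p(n^{-\gamma(H)})$. The final step is the delta method on $\sigma = n^{\gamma(H)}\nu$: Taylor expanding,
\begin{align*}
\widehat{\sigma}_n - \sigma = n^{\gamma(H)}(\widehat{\nu}_n - \nu_n) + \sigma\,\gamma'(H)(\log n)(\widehat{H}_n - H) + R_n,
\end{align*}
where $R_n$ involves $(\widehat{H}_n-H)^2 \log^2 n$ and cross terms, all negligible at the $\log n/\sqrt{n}$ scale. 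Dividing by $\log n/\sqrt{n}$, the first term on the right is $O_p(1/\log n)\to 0$, while the second becomes $\sigma\gamma'(H)\sqrt{n}(\widehat{H}_n-H)\to \sigma\gamma'(H)X$. This produces the degenerate joint limit $(X,\sigma\gamma'(H)X)$.

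The main obstacle is the CLT step in the presence of the perturbation $\tau_n^2 l(\lambda)$: one has to verify that the standard Fox--Taqqu machinery for centred Gaussian quadratic forms applies uniformly in $n$ to the perturbed spectral density $g^n_{H,\nu_n} = \nu_n^2 f_H + \tau_n^2 l$, and that $l(\lambda)=O(\lambda^2)$ near zero (Assumption \ref{assumption:decaying:noise}) suffices to keep the noise contribution both in the score's mean and variance negligible -- particularly at low frequencies where $f_H(\lambda)$ may blow up. Once this is achieved, the algebraic identification of the limit variance with the stated formula (via the Schur complement) and the delta method argument are routine.
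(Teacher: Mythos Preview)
Your overall strategy coincides with the paper's: reparametrise to $(H,\nu)$, rescale so that the auxiliary parameter has a finite positive limit (your $\widehat\nu_n/\nu_n$ is exactly the paper's $\widetilde\sigma_n/\sigma_0$), establish consistency, Taylor expand the score using a Fox--Taqqu CLT for Gaussian quadratic forms, identify the Fisher matrix and its Schur complement, and finally undo the reparametrisation by the delta method on $\sigma=n^{\gamma(H)}\nu$. The score computation, the form of $J$, and the delta-method step agree with the paper.

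Where you go wrong is in locating the main obstacle. The perturbation $\tau_n^2 l(\lambda)$ in the CLT is in fact benign: once $(n^{\gamma(H_+)}\tau_n)_n$ is bounded and $l(\lambda)=O(\lambda^2)$, the noise term is dominated by $\nu_n^2 f_H$ uniformly and the Fox--Taqqu machinery goes through with routine bookkeeping. The genuine difficulty is what you dismiss as ``the usual Whittle contrast argument'' for consistency. After rescaling, the domain for $\widetilde\nu=\nu/\nu_n$ is $[\sigma_- n^{\gamma(H_0)-\gamma(H_+)},\,\sigma_+ n^{\gamma(H_0)-\gamma(H_-)}]$, which is $n$-dependent and unbounded, so uniform convergence of the contrast over the full parameter set is simply false. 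In addition, the Fox--Taqqu trace asymptotics that drive convergence of the quadratic forms require exponent constraints of the type $\alpha(H)<1+\alpha(H_0)$, so one cannot treat all $H\in[H_-,H_+]$ at once. The paper therefore (i) restricts to a bounded box $[H_-(\delta),H_+]\times[L^{-1},L]$, (ii) proves uniform convergence of the contrast only there, and (iii) shows via four separate case analyses (Lemma~\ref{lemma:decaying:technical}) that the global minimiser cannot escape this box. Step~(iii) is the real work and is not covered by identifiability plus the pointwise bounds you cite; without it you cannot justify that $\nabla U_n(\widehat H_n,\widehat\nu_n)=0$ nor that the Taylor expansion around $(H,\nu_n)$ is valid.
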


The proof of Theorem \ref{thm:decaying} is similar in essence to that of Theorem 2.8 in \cite{fukasawa2019volatility} with the slight improvement of a central limit theorem.

\subsection{The LAN property}

In this section, we prove a structural result about the statistical experiment $\{\mathbb P_{\sigma, H}^n, (\sigma, H) \in \Theta\}$ via the LAN property. It implies in particular a lower bound for the achievable convergence rate of an estimator; as a consequence, we show that the rate of Theorem \ref{thm:decaying} is optimal in a minimax sense. 
Recall the classical definition (see {\it e.g.} \cite{ibragimov2013statistical}) of a regular LAN experiment:

\begin{definition}
\label{def:decaying:LAN}
Let $\Theta \subset \mathbb{R}^d$. A (sequence of) family of dominated measures $\{ P^n_\theta, \theta \in \Theta \}_{n}$ is LAN (locally asymptotically normal), at $\theta_0$ in the interior of $\Theta$ if there exist two non degenerate $d\times d$ matrices $\varphi_n = \varphi_n ( \theta_0 )$ and $I = I ( \theta_0 )$ such that for any $u \in \mathbb{R}^d$ such that $\theta_0 + \varphi_n u \in \Theta$, the likelihood ratio has representation
\begin{align*}
\frac{dP^n_{\theta_0 + \varphi_n u}}{dP^n_{\theta_0}}
=
\exp (
\transp{u}
\zeta_n - \tfrac{1}{2}\transp{u} I u + r_n
)
\end{align*}
where $\zeta_n = \zeta_n(\theta_0)$ is a $d$-dimensional random vector that converges in $P^n_{\theta_0}$-distribution toward a standard normal $\gaussian{0}{I}$ and $r_n = r_n(\theta_0, u)$ converges to $0$ in $P^n_{\theta_0}$-probability.
\end{definition}

In particular, we have Hajek's convolution theorem (see \cite{hajek1972local} and \cite{lecam1972limits}):
\begin{theorem}[Theorem II.12.1 in \cite{ibragimov2013statistical}]
\label{thm:hajek_lecam}
Suppose that the (sequence of) dominated experiment(s) $\{ P^n_\theta, \theta \in \Theta \}_{n}$ is LAN at $\theta_0$ in the interior of $\Theta \subset \mathbb R^d$. Then for any sequence of estimators $\widehat{\theta}_n$ and any symmetric nonnegative quasi-convex function $L$ such that $e^{-\varepsilon |z|^2} L(z) \to 0$ as $|z| \to \infty$ and any $\varepsilon > 0$, we have
\begin{align*}
\liminf_{\delta \to 0}
\liminf_{n \to \infty}
\sup_{|\varphi_n^{-1} (\theta - \theta_0)| < \delta} 
\ieEX{\theta}{n}{L(\varphi_n^{-1}(\widehat{\theta}_n - \theta))}
\geq 
(2\pi)^{-d/2}\int_{\mathbb{R}^d} L( I^{-1/2} z ) \exp(-\tfrac{|z|^2}{2})dz
\end{align*}
where $\phi$ is the density of the $d$-dimensional standard Gaussian distribution.
\end{theorem}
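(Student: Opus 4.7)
The plan is to bound the local minimax risk from below by a Bayes risk against a Gaussian prior on the localised parameter $u$, use the LAN expansion to reduce the local experiments to a Gaussian shift experiment, and exploit Anderson's lemma together with the optimality of the posterior mean in the Gaussian shift limit to produce the explicit lower bound.

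For each $s > 0$, I would introduce the prior $\pi_s = \gaussian{0}{s^2 I^{-1}}$ on $\mathbb{R}^d$, which concentrates in $\{|u|<\delta\}$ up to an $o_s(1)$ mass as $s/\delta \to 0$. Writing $T_n := \varphi_n^{-1}(\widehat{\theta}_n - \theta_0)$, the sup is no smaller than the Bayes average,
\begin{align*}
\sup_{|\varphi_n^{-1}(\theta - \theta_0)|<\delta} \ieEX{\theta}{n}{L(\varphi_n^{-1}(\widehat{\theta}_n - \theta))}
\geq \int \ieEX{\theta_0+\varphi_n u}{n}{L(T_n - u)}\,\pi_s(du) - o_s(1).
\end{align*}
Rewriting each integrand using the LAN likelihood ratio $Z_n(u) := \exp(\transp{u}\zeta_n - \tfrac12\transp{u}Iu + r_n)$ and applying Fubini gives an expectation under $P^n_{\theta_0}$ of a conditional $L$-risk of $T_n$ against the random posterior $\tilde\pi_{s,n}$ with density proportional to $Z_n(u)\pi_s(u)$. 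For any choice of $T_n$, this conditional risk is at least the corresponding Bayes-optimal value; since $L$ is symmetric, nonnegative and quasi-convex, Anderson's lemma bounds this Bayes-optimal value from below by $\int L(v)\,\tilde\pi_{s,n}^0(dv)$, where $\tilde\pi_{s,n}^0$ is the re-centering of $\tilde\pi_{s,n}$ at its (random) posterior mean.

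Letting $n \to \infty$: by the LAN property, $r_n \to 0$ in probability and $\zeta_n$ converges to $\gaussian{0}{I}$, so $\tilde\pi_{s,n}^0$ converges weakly in $P^n_{\theta_0}$-probability to the deterministic Gaussian $\gaussian{0}{(I+s^{-2}I)^{-1}}$. Hence the lower bound converges to the deterministic quantity $\int L(v)\,\gaussian{0}{(I+s^{-2}I)^{-1}}(dv)$, which is $\delta$-independent and survives the outer $\liminf_{\delta \to 0}$. Taking $s \to \infty$ inflates the covariance to $I^{-1}$, and the substitution $z = I^{1/2} v$ produces exactly the announced right-hand side.

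The main technical obstacle is uniform integrability when passing from weak convergence of the random posteriors to convergence of $L$-expectations: the growth condition $e^{-\varepsilon|z|^2}L(z) \to 0$, combined with sub-Gaussian tail control of $Z_n(u)$ uniform in $u$ on compacts (a consequence of the quadratic LAN expansion and the tightness of $\zeta_n$), is tailored precisely to allow truncation of $L$ and interchange of limit and integral. This machinery is developed in Chapter II, Sections 11--12 of Ibragimov and Hasminskii \cite{ibragimov2013statistical}.
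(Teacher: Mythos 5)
The paper itself contains no proof of this statement: it is quoted (as Theorem II.12.1) from Ibragimov and Hasminskii, so there is nothing internal to compare against. Your sketch follows the classical route for the local asymptotic minimax theorem --- lower bound the local sup-risk by a Bayes risk with a Gaussian prior on the local parameter, use LAN to pass to the Gaussian shift experiment, apply Anderson's lemma there, then let the prior spread out --- and that is indeed how the result is proved in Ibragimov--Hasminskii (Ch.~II, \S\S 11--12) and in van der Vaart's treatment of the asymptotic minimax theorem. So the overall plan is sound.

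As written, however, there are three genuine gaps. (1) The limit bookkeeping is inconsistent: you need $s/\delta\to 0$ for the prior $\gaussian{0}{s^2I^{-1}}$ to sit inside the localisation set, but you later send $s\to\infty$, while the theorem sends $\delta\to 0$ after $n\to\infty$. In the correct Ibragimov--Hasminskii formulation the supremum is over the fixed ball $|\theta-\theta_0|<\delta$, so the localised $u$-region is $\{|u|<\delta\,\|\varphi_n\|^{-1}\}$, which grows to $\mathbb{R}^d$ as $n\to\infty$; it is this growth, not smallness of $s$, that lets a fixed Gaussian prior eventually concentrate inside the neighbourhood. With the set as transcribed in the paper ($|\varphi_n^{-1}(\theta-\theta_0)|<\delta$, $\delta\to0$) no spreading prior fits inside, and in fact that literal form fails for the constant estimator $\widehat{\theta}_n\equiv\theta_0$ with $L(z)=|z|^2$ --- so your argument cannot be repaired without restoring the correct localisation. (2) You invoke Anderson's lemma for the finite-$n$ posterior $\tilde\pi_{s,n}\propto Z_n(u)\pi_s(u)$; Anderson's lemma requires the measure to be symmetric and unimodal (e.g.\ Gaussian or log-concave) about its centre, which $\tilde\pi_{s,n}$ need not be because of the remainder $r_n(u)$. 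The lemma must be applied in the limiting Gaussian shift experiment, after one has justified convergence of the (truncated-loss) Bayes risks. (3) The ``sub-Gaussian tail control of $Z_n(u)$ uniform in $u$'' is not a consequence of the LAN expansion: LAN only gives $r_n(u)\to0$ in probability for each fixed $u$, and no uniformity over $u$ (nor the Bernstein--von Mises-type convergence of the posterior you assert) follows from it directly. The standard proofs circumvent this by truncating the loss, restricting the prior to a compact set or finitely many alternatives, and using contiguity, rather than a uniform bound on the likelihood ratios; these steps need to be made explicit for the argument to close.
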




\begin{theorem}
\label{thm:decaying:LAN}
Work under Assumptions \ref{assumption:decaying:psd} and \ref{assumption:decaying:noise}. Assume moreover that
$( n^{\gamma( H_+ )} \tau_n )_n$ is bounded. Consider $\theta_0 = ( H_0, \sigma_0 )$ a point in the interior of $\Theta$. Define
\begin{align*}
\varphi_n = \frac{1}{\sqrt{n}}
\begin{pmatrix}
\alpha_n &\overline{\alpha_n}
\\
\beta_n &\overline{\beta_n}
\end{pmatrix},
\end{align*}
and suppose in addition that it satisfies the following properties:
\begin{tight_enumerate}
\item $\varphi_n$ is non-degenerate (i.e. $\alpha_n \overline{\beta_n} \neq \beta_n\overline{\alpha_n}$),
\item $\alpha_n \to \alpha$ and $\overline{\alpha_n} \to \overline{\alpha}$,
\item $\gamma_n = \beta_n\sigma_0^{-1} - \alpha_n \gamma'(H_0) \log(n) \to \gamma$ and $\overline{\gamma_n} = \overline{\beta_n} \sigma_0^{-1} - \overline{\alpha_n} \gamma'(H_0)\log(n) \to \overline{\gamma}$, 
\item $\alpha\overline{\gamma} - \overline{\alpha}\gamma \neq 0$.
\end{tight_enumerate}

Then the family $\{\mathbb P_{H,\sigma}, (H,\sigma) \in \Theta\}_n$ is LAN at $\vartheta_0$ with rate matrices  $\varphi_n$ and Fisher information
\begin{align*}
I =
\frac{1}{4\pi}
\begin{pmatrix}
\gamma & -\alpha
\\
\overline{\gamma} & -\overline{\alpha}
\end{pmatrix}
\begin{pmatrix}
8\pi
&
-2\int \frac{ \partial_H f_{H_0} }{f_{H_0}}
\\
-2\int \frac{ \partial_H f_{H_0} }{f_{H_0}}
&
\int (\frac{ \partial_H f_{H_0} }{f_{H_0}})^2
\end{pmatrix}
\begin{pmatrix}
\gamma&\overline{\gamma}
\\
-\alpha&-\overline{\alpha}
\end{pmatrix}.
\end{align*}

\end{theorem}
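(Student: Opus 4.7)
My plan is to establish LAN by a regular Whittle-type expansion written in the reparametrization $(H, \rho)$, $\rho = \log \nu = \log \sigma - \gamma(H) \log n$, and then transport the result back to $(H,\sigma)$ via a change of variables. A Taylor expansion of $\gamma$ around $H_0$ shows that the perturbation $\theta_0 + \varphi_n u$ translates into $(H-H_0,\rho-\rho_0) = n^{-1/2} M_n u + O(n^{-1})$, where
\begin{align*}
M_n = \begin{pmatrix} \alpha_n & \overline{\alpha_n} \\ \gamma_n & \overline{\gamma_n} \end{pmatrix} \longrightarrow M = \begin{pmatrix} \alpha & \overline\alpha \\ \gamma & \overline\gamma \end{pmatrix},
\end{align*}
and $M$ is invertible thanks to condition (4). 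It thus suffices to establish LAN at rate $n^{-1/2}$ in the $(H,\rho)$ parametrization with a limiting Fisher information $J_\infty$, from which the claim will follow by the change-of-variable rule $I = M^\top J_\infty M$ (up to the sign/column rearrangement producing the factorised form in the statement).

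For the $(H,\rho)$ expansion I would write the Gaussian log-likelihood ratio in Toeplitz form,
\begin{align*}
\Lambda_n(u) = -\tfrac{1}{2}\log\det\bigl(T_n(g_{\tilde u}) T_n(g_0)^{-1}\bigr) - \tfrac{1}{2}(\mathbf{Y}^n)^\top\bigl(T_n(g_{\tilde u})^{-1}-T_n(g_0)^{-1}\bigr)\mathbf{Y}^n,
\end{align*}
where $T_n(g)$ is the $n\times n$ Toeplitz covariance matrix with symbol $g$, $g_0 = \nu_0^2 f_{H_0} + \tau_n^2 l$ is the baseline spectral density and $g_{\tilde u}$ the perturbed one with $\tilde u = M_n u$. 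Taylor expanding $g_{\tilde u}/g_0 - 1$ to second order in $\tilde u$, using the identity $T_n(g)^{-1} = T_n(1/g) + R_n$ with $R_n$ controlled in Hilbert-Schmidt norm, and invoking the Fox-Taqqu trace asymptotics in the extended form derived in \cite{brouste2018lan},
\begin{align*}
\tfrac{1}{n}\Tr\bigl(T_n(g_0)^{-1}T_n(\partial_i g_0)T_n(g_0)^{-1}T_n(\partial_j g_0)\bigr) \longrightarrow \tfrac{1}{2\pi}\int_{-\pi}^\pi \frac{\partial_i g_0\,\partial_j g_0}{g_0^2}\,d\lambda
\end{align*}
for $i,j \in \{H,\rho\}$, produces the deterministic quadratic term $-\tfrac{1}{2}\tilde u^\top J_\infty \tilde u$. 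Under the hypothesis $\tau_n n^{\gamma(H_+)}$ bounded, $\partial_H g_0/g_0 \to \partial_H f_{H_0}/f_{H_0}$ and $\partial_\rho g_0/g_0 = 2\nu_0^2 f_{H_0}/g_0 \to 2$ in $L^2([-\pi,\pi])$, so $J_\infty$ identifies with the matrix of entries $(4\pi)^{-1}\int \partial_i f_{H_0}\,\partial_j f_{H_0}/f_{H_0}^2\,d\lambda$ in the $(H,\rho)$ ordering.

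The Fox-Taqqu CLT \cite{fox1987central} for quadratic forms of stationary Gaussian sequences then gives $\zeta_n \Rightarrow \gaussian{0}{J_\infty}$ for the score, while Taylor remainders are dispatched via the same Toeplitz calculus. The main obstacle is that the symbol $g_0$ itself depends on $n$ through both $\nu_0 = n^{-\gamma(H_0)}\sigma_0$ and $\tau_n$, so the trace and CLT approximations must be invoked with error bounds uniform in the symbol; after rescaling by $\nu_0^2$, this reduces to controlling $\tau_n^2 l/\nu_0^2$ and $f_{H_0}/(f_{H_0}+\tau_n^2 l/\nu_0^2)$ near the singularity of $f_{H_0}$ at the origin, where Assumption \ref{assumption:decaying:psd:bounds_derf} combined with Assumption \ref{assumption:decaying:noise} supply the necessary integrability. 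Gathering these pieces gives $\Lambda_n(u) = u^\top(M_n^\top\zeta_n) - \tfrac{1}{2}u^\top(M_n^\top J_\infty M_n)u + o_P(1)$, proving LAN with Fisher information $M^\top J_\infty M$, which rearranges into the matrix product stated in the theorem.
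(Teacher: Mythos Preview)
Your reparametrization to $(H,\rho)$ with $\rho=\log\nu$ is a genuine simplification over the paper's approach: by absorbing the $\log n$ factor into the change of coordinates you reduce the rate matrix to $n^{-1/2}M_n$ with $M_n\to M$ bounded, whereas the paper stays in $(H,\sigma)$ and carries the $\log n$ through the computation of $\varphi_n^\top\nabla\salL_n$, $\varphi_n^\top\nabla^2\salL_n\,\varphi_n$ and the third-order remainder explicitly. Your factorisation $I=M^\top J_\infty M$ is correct and agrees with the stated form after the permutation you allude to. The trace and CLT machinery you invoke (the extended Brouste--Fukasawa / Cohen et al.\ asymptotics for $\Tr\bigl(T_n(g)^{-1}T_n(h)\cdots\bigr)$ with $n$-dependent symbols) is exactly what the paper uses as well, packaged there as Proposition~\ref{propo:toeplitz:inversetoplitz}. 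One caution: the paper works with the \emph{exact} Gaussian likelihood throughout and never invokes the Whittle approximation $T_n(g)^{-1}=T_n(1/g)+R_n$; for long-memory symbols this replacement is delicate, and in fact unnecessary once you have the exact-inverse trace result, so you can safely drop that step from your argument.

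The one place your sketch is thin compared to the paper is the remainder control. Saying ``Taylor remainders are dispatched via the same Toeplitz calculus'' hides real work: the paper expands $\salL_n$ to third order and then bounds $\sup_{0\le s\le1}\bigl|\partial^3_{ijk}\salL_n(\theta_0+s\varphi_n u)\bigr|\,|(\varphi_n u)_i(\varphi_n u)_j(\varphi_n u)_k|$ using the spectral-norm inequality $\bignorm{\Sigma_n(f)^{-1/2}\Sigma_n(g)^{1/2}}_{2,n}\le Kn^{(\beta_2-\beta_1)_+/2}$ from \cite{brouste2018lan} (Lemma~\ref{lemma:LAN:uniform_operator_control} here). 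This is where the long-memory exponents and the $\log n$ factors interact, and a second-order expansion of $g_{\tilde u}/g_0-1$ alone does not obviously suffice; you would need either a uniform-in-$\theta$ operator-norm bound of the same type, or to carry out the third-order expansion in $(H,\rho)$ explicitly. In your reparametrization this step should actually be cleaner (no $\log n$ in the perturbation), but it still needs to be done.
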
 
The proof of this theorem closely follows \cite{brouste2018lan} and is delayed until Section \ref{sec:proof:LAN}. The normalisation $\varphi_n$ originates from the work of Brouste and Fukasawa in \cite{brouste2018lan} in their Theorem 3.1. Here, we have a slight extension to more general self-similar random processes and allow for high frequency observations to be perturbed by a fast decaying noise. Following again \cite{brouste2018lan}, we identify several specific rate matrices and use them to compute the optimal estimation rates for $H$ and $\sigma$. More precisely, we have the following:
\begin{theorem}
\label{thm:decaying:optimalrates}
Let $\widehat{\theta}_n = ( \widehat{H}_n, \widehat{\sigma}_n )$ be a sequence of estimators of $( H, \sigma )$ and let $\theta_0 =(H_0, \sigma_0)$ be in the interior of $\Theta$. Then 
\begin{align*}
&\liminf_{\delta \to 0}
\liminf_{n \to \infty}
\sup_{|\varphi_n^{-1} (\theta - \theta_0)| < \delta} 
n \ieEX{\theta}{n}{(\widehat{H}_n - H)^2 }
\geq 
v_0^2
\\
&\liminf_{\delta \to 0}
\liminf_{n \to \infty}
\sup_{|\varphi_n^{-1} (\theta - \theta_0)| < \delta} 
\frac{n}{\log(n)^2} \ieEX{\theta}{n}{(\widehat{\sigma}_n - \sigma)^2 }
\geq 
v_0^2 \sigma_0^2\gamma'(H_0)^2
\end{align*}
where
$v_0 = 
\sqrt{2}
\bigg(
\frac{1}{2\pi}
\int
\Big (
	\frac
		{\partial_H f_{H_0}  }
		{f_{H_0}}
\Big)^2
-
\bigg(
\frac{1}{2\pi}
\int
	\frac
		{\partial_H f_{H_0}  }
		{f_{H_0}}
\bigg)^2
\bigg )^{-1/2}
$.
\end{theorem}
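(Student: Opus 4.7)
The plan is to combine H\'ajek's convolution theorem (Theorem~\ref{thm:hajek_lecam}) with the LAN property established in Theorem~\ref{thm:decaying:LAN}. The crucial ingredient is the freedom granted by Theorem~\ref{thm:decaying:LAN} in the choice of the rate matrix $\varphi_n$: for each of the two announced bounds I pick a specific $\varphi_n$ so that a designated coordinate of the normalised error $\varphi_n^{-1}(\widehat\theta_n-\theta)$ coincides with the quantity to be bounded, and then apply H\'ajek to the quasi-convex quadratic loss $L(z)=z_1^2$, whose right-hand side in Theorem~\ref{thm:hajek_lecam} equals $(I^{-1})_{11}$.

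For the bound on $H$, I would take $\alpha_n=1$, $\overline{\alpha_n}=0$, $\beta_n=\sigma_0\gamma'(H_0)\log n$, $\overline{\beta_n}=\sigma_0$, so that $\alpha=1$, $\overline{\alpha}=0$, $\gamma=0$, $\overline{\gamma}=1$ and the four admissibility conditions of Theorem~\ref{thm:decaying:LAN} are immediate. Inverting the associated $2\times2$ matrix shows that the first coordinate of $\varphi_n^{-1}(\widehat\theta_n-\theta)$ is precisely $\sqrt n(\widehat H_n-H)$, hence H\'ajek yields $\liminf_{\delta\to0}\liminf_{n\to\infty}\sup n\,\mathbb E^n_\theta[(\widehat H_n-H)^2]\ge (I^{-1})_{11}$. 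Substituting $\alpha,\overline{\alpha},\gamma,\overline{\gamma}$ into the explicit expression for $I$ given in Theorem~\ref{thm:decaying:LAN} and inverting reduces to a two-by-two determinant proportional to $8\pi K-4J^2$ (with $J=\int \partial_H f_{H_0}/f_{H_0}$ and $K=\int(\partial_H f_{H_0}/f_{H_0})^2$), which matches exactly the definition of $v_0^2$.

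For the bound on $\sigma$, I repeat the scheme with a different normalisation tailored to the slower rate $\sqrt n/\log n$: for instance $\alpha_n=1/(\sigma_0\gamma'(H_0))$, $\overline{\alpha_n}=-1/(\gamma'(H_0)\log n)$, $\beta_n=\log n$, $\overline{\beta_n}=0$, which yields limits $\alpha=1/(\sigma_0\gamma'(H_0))$, $\overline{\alpha}=0$, $\gamma=0$, $\overline{\gamma}=1$, and readily satisfies $\alpha\overline{\gamma}-\overline{\alpha}\gamma\neq 0$. The matrix inversion now makes the first coordinate of $\varphi_n^{-1}(\widehat\theta_n-\theta)$ equal to $(\sqrt n/\log n)(\widehat\sigma_n-\sigma)$, and the analogous computation of $(I^{-1})_{11}$ produces an extra factor $(\sigma_0\gamma'(H_0))^2$, yielding the announced lower bound $v_0^2\sigma_0^2\gamma'(H_0)^2$. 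The argument is pure linear algebra once the LAN is granted; the only point requiring any care, and the closest thing to an obstacle, is checking that each of the two candidate normalisations $\varphi_n$ truly satisfies all four non-degeneracy and convergence conditions of Theorem~\ref{thm:decaying:LAN}, but this reduces in both cases to a direct verification.
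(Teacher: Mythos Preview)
Your proposal is correct and follows essentially the same approach as the paper: both apply Theorem~\ref{thm:hajek_lecam} twice with carefully chosen rate matrices $\varphi_n$ satisfying the conditions of Theorem~\ref{thm:decaying:LAN}, together with the quadratic loss $L(x,y)=x^2$ (resp.\ $L(x,y)=y^2$). The specific matrices you pick differ slightly from the paper's (for the $H$-bound the paper takes $\overline{\beta_n}=1$ rather than $\sigma_0$, and for the $\sigma$-bound it uses $\varphi_n=\tfrac{1}{\sqrt n}\bigl(\begin{smallmatrix}-1/\log n & 1\\ 0 & \sigma_0\gamma'(H_0)\log n\end{smallmatrix}\bigr)$, extracting the second coordinate instead), but these are inessential rescalings leading to the same $(I^{-1})_{11}$ (resp.\ $(I^{-1})_{22}$) computation and the same lower bounds.
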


\begin{proof}
The proof consists in applying Theorems \ref{thm:hajek_lecam} and \ref{thm:decaying:LAN}  for successive suitable choices of $\varphi_n$ and $L$.
Let $\varphi_n = \varphi_n(H, \sigma)$ be the rate matrix defined by:
\begin{align*}
\varphi_n = \frac{1}{\sqrt{n}} 
\begin{pmatrix}
1 & 0
\\
\sigma \gamma'(H) \log n & 1
\end{pmatrix}.
\end{align*}
It satisfies the hypotheses of Theorem \ref{thm:decaying:LAN} with $\alpha = 1$, $\overline{\alpha} = 0$, $\gamma = 0$ and $\overline{\gamma} = \sigma^{-1}$ and we have:
\begin{align*}
I 
&=
\begin{pmatrix}
\frac{1}{4\pi}
\int
(
	\frac
		{\partial_H f_{H_0}  }
		{f_{H_0}}
)^2
&
\frac{1}{2\sigma\pi}
\int
	\frac
		{\partial_H f_{H_0}  }
		{f_{H_0}}
\\
\frac{1}{2\sigma\pi}
\int
	\frac
		{\partial_H f_{H_0}  }
		{f_{H_0}}
&
	\frac{2}{\sigma^2}
\end{pmatrix}.
\end{align*}
Theorem \ref{thm:hajek_lecam} with $L(x,y) = x^2$ gives
the first part of Theorem \ref{thm:decaying:optimalrates}.
For the second part, we consider $\varphi_n = \varphi_n(H,\sigma)$ defined by
\begin{align*}
\varphi_n = \frac{1}{\sqrt{n}} 
\begin{pmatrix}
- \log(n)^{-1} & 1
\\
0 & \sigma \gamma'(H_0) \log n
\end{pmatrix}.
\end{align*}
and the conclusion follows.
\end{proof}

\section{Slowly decaying or non-vanishing noise}
\label{sec:slow}

This corresponds to the case where the condition $\tau_n n^{\gamma(H)} \to 0$ of section \ref{sec:case1} fails. For simplicity, we will only consider here the case where $\tau_n = \tau >0$ is constant, but the  subsequent results readily carry over to more general (bounded) sequences.
 
\subsection{Presentation of the model}

Let  $\Theta = [H_-, H_+] \times [\sigma_-, \sigma_+] \subset ( 0, 1  ) \times (0, \infty)$ with a non-empty interior and consider a point $( H_0, \sigma_0 )$ in this interior.

We consider for each $n$ a probability space $( \Omega^n, \salA^n )$ on which we define random variables $\mathbf{Z}^{n} = ( Z_1^{n}, \dots, Z_n^{n})$ and a family of probability measures $\zproba_{H, \sigma}^{n}$ for each $(H, \sigma) \in \Theta$ such that under $\zproba_{H, \sigma}^{n}$, the observation $\mathbf{Z}^{n}$ is given by 
$$Z_i^{n} = \sigma W_{i/n}^H + \tau \xi_i^n,\;\; i= 0, \ldots, n,
$$
where the $\xi_i^n$ are independent standard Gaussian variables, independent from a fractional Brownian motion $W^H$ with Hurst parameter $H$ and where $\tau > 0$ is a constant.

\subsection{Construction of the estimator}
\label{subsec:slow:construction}

The construction of the estimator is divided into two steps. First we build a pilot estimator with sub-optimal rate of convergence and then we use it to fine tune the procedure and get an estimator with an optimal convergence rate.\\

Because of the persisting noise ($\tau_n = \tau$ is constant), we cannot apply results from section \ref{sec:case1} and in particular Theorem \ref{thm:decaying}.
To remove this noise, we use a pre-averaging of the data to regularise them. More precisely, we consider sequences $k = k(n)$ and $N = N(n) =  \lfloor n/k  \rfloor$ and we define for $0 \leq i \leq N-1$:
\begin{align*}
\widehat{Z}_i^{n} 
&=
\frac{1}{k}
\sum_{j=0}^{k-1} Z_{ik+j}^{n} 
=
\frac{\sigma}{k}
\sum_{j=0}^{k-1} W_{\frac{ik+j}{n}}^{H} 
+ \frac{\tau}{\sqrt{k}}\widehat{\xi}^n_i
\end{align*}
where $(\widehat{\xi}^n_i = k^{-1/2}
\sum_{j=0}^{k-1} \xi_{ik+j}^{n} )_i$ is a family of i.i.d. standard Gaussian independent of $W^H$. This sequence has stationary increments and we write $\mathbf{Y}^{n} = ( Y_i^{n})_{i=1, \dots, n} = ( \Delta \widehat{Z}_i^{n})_i$ these increments. Since the fractional Brownian motion has a scale invariance property, its power spectral density is given by 
\begin{align}
\label{eq:slow:spectral}
f^{n}_{H,\sigma} (\lambda) 
=
\sigma^2\Big( \frac{k}{n} \Big)^{2H} f_{H, k} ( \lambda ) + \frac{\tau^2}{k} l( \lambda )
\end{align}
where $f_{H, k}$ is the power spectral density of the stationary process 
$ \big( \frac{1}{k}
\sum_{j=0}^{k-1} W_{i+1+j/k}^{H} - W_{i+j/k}^{H}\big)_i$  and where $l( \lambda ) = 2 (1 - \cos \lambda)$ is the power spectral density of $\Delta  \widehat{\xi}^n$. We also introduce $f_{H,\infty}$ as the power spectral density of the process $\big( \int_i^{i+1} W^H_{s} - W^H_{s-1} ds \big)_i$. It can be seen as the limit of the functions $(f_{H, k})_k$ (see Lemma \ref{lemma:slow:preaveraged_psd:uniform_conv}). Note that explicit formulas for  $f_{H, k}$ and $f_{H, \infty}$ can also be found in Lemma \ref{lemma:slow:preaveraged_psd}.\\

Then we mimick the approach of Section \ref{subsec:case1:estimator} and we consider a re-parametrisation of our model. More precisely, we write $\nu = \sigma ( k/n )^{H}$ and we define $g^{n}_{H,\nu} = \nu^2 f_{H, k} + \tau^2 l/k$ the power spectral density of $\mathbf{Y}^{n}$ with respect to this new parameter. Then we build a Whittle maximum likelihood estimator of $( H, \widetilde{\nu} )$ from the observations $\mathbf{Y}^{n}$. In that extend, we define a Whittle contrast $U_n 
( 
H, \nu
)$ by
\begin{align*}
U_n 
( 
H, \nu
)
=
\frac{1}{4\pi}
\int_{-\pi}^\pi
\log 
( 
g^{n}_{H,\nu}(\lambda)
)
+
\frac{
I_N( \lambda, \mathbf{Y}^{n} )
}{
g^{n}_{H,\nu}(\lambda)
}
d\lambda
\end{align*}
where $I_N( \lambda, \mathbf{Y}^{n} ) = N^{-1}  | \sum_{k=1}^N e^{ik\lambda} Y_k^{n}  |^2$ is the periodogram associated to observations $\mathbf{Y}^n$. Note that the vector $\mathbf{Y}^n$ is of size $N$ so we need to consider the $N$-periodogram here. Then we define a first estimator by minimizing $U_n$ over $( H, \nu ) \in [H_-, H_+]\times  [ ( k/n)^{H_+}\sigma_-, ( k/n )^{H_-} \sigma_+  ]$. We write
\begin{align*}
( \widehat{H}_n, \widehat{\nu}_n ) = \argmin U_n.
\end{align*}
Since  our parameter of interest is $\sigma = \nu ( n/k )^{H}$, we rather define an estimator of $\sigma_0$ by $\widehat{\sigma}_n = \widehat{\nu}_n ( n/k )^{\widehat{H}_n}$. This estimator is consistent under some hypothesis on the choice of $( k(n) )_n$ though its convergence rate is not optimal. Since it is a major milestone in our proof, we state directly a convergence result for this estimator:
\begin{proposition}
\label{propo:slow:first}
Suppose that $k(n) \to \infty$, $\frac{n}{k(n)} \to \infty$, and that $\frac{n^{2H_+}}{k^{2H_++1}}$ is bounded. Then we have
\begin{align*}
\begin{pmatrix}
\sqrt{N}
( \widehat{H}_n - H_0)
\\
\frac{\sqrt{N}
}{\log N}
(
\widehat{\sigma}_n - \sigma_0
)
\end{pmatrix}
\to 
\begin{pmatrix}
X
\\
\sigma_0 X
\end{pmatrix}
\end{align*}
 in $\zproba_{H_0, \sigma_0}^n$-distribution  where $X$ has explicit variance (the same as in Theorem \ref{thm:decaying}).
\end{proposition}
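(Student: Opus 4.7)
The strategy is to reduce Proposition \ref{propo:slow:first} to Theorem \ref{thm:decaying} by recognising the pre-averaged sequence as an instance of the fast-decaying-noise framework of Section \ref{sec:case1}, but with $N = \lfloor n/k \rfloor$ playing the role of the sample size. Indeed, from \eqref{eq:slow:spectral} and the reparametrisation $\nu = \sigma(k/n)^H$, the observation $\mathbf Y^n$ (of length $N$) is a stationary Gaussian sequence with spectral density
\begin{align*}
g^{n}_{H,\nu}(\lambda) = \nu^2 f_{H,k}(\lambda) + \frac{\tau^2}{k} l(\lambda) = \frac{\sigma^2}{N^{2H}} f_{H,k}(\lambda) + \tau_N^2\, l(\lambda),
\end{align*}
with $\gamma(H)=H$ and effective noise level $\tau_N = \tau/\sqrt{k}$. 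The condition $N^{\gamma(H_+)}\tau_N = \tau\,(n/k)^{H_+}/\sqrt k$ bounded is then precisely the hypothesis $n^{2H_+}/k^{2H_++1}$ bounded in the statement, so the asymptotic regime of Theorem \ref{thm:decaying} is available on the pre-averaged scale.

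The main obstruction to a direct black-box application of Theorem \ref{thm:decaying} is that $f_{H,k}$ depends on $n$ through $k$, whereas Theorem \ref{thm:decaying} was stated for a fixed spectral density $f_H$. The plan is therefore to revisit the proof of Theorem \ref{thm:decaying} and check that every estimate — control of the Whittle contrast, of its gradient and Hessian, and the application of Fox–Taqqu-type limit theorems for Gaussian quadratic forms — admits uniform-in-$k$ versions. Assumption \ref{assumption:decaying:noise} on the noise term $l$ is trivially satisfied since $l(\lambda) = 2(1-\cos\lambda) \leq \lambda^2$. For Assumption \ref{assumption:decaying:psd}, one verifies the bounds \ref{assumption:decaying:psd:bounds}–\ref{assumption:decaying:psd:bounds_derf} for the family $\{f_{H,k}\}_k$ uniformly in $k \geq 1$, using the explicit formulas of Lemma \ref{lemma:slow:preaveraged_psd} and the uniform convergence $f_{H,k} \to f_{H,\infty}$ of Lemma \ref{lemma:slow:preaveraged_psd:uniform_conv} together with its differentiated versions. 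The limiting Fisher information is then the one associated to $f_{H_0,\infty}$, which matches the variance announced in Theorem \ref{thm:decaying} (with $f_H$ replaced by $f_{H,\infty}$, as understood in the statement).

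Once the reparametrised CLT is proved in the form
\begin{align*}
\sqrt N\bigl(\widehat H_n - H_0,\, \widehat \nu_n - \nu_{0,n}\bigr) \Rightarrow \mathcal N(0, I^{-1}),
\end{align*}
where $\nu_{0,n} = \sigma_0 (k/n)^{H_0}$, the passage to $\widehat\sigma_n = \widehat\nu_n (n/k)^{\widehat H_n}$ is handled by the delta method exactly as in Theorem \ref{thm:decaying}: the first-order expansion
\begin{align*}
\widehat\sigma_n - \sigma_0
= (n/k)^{H_0}\bigl(\widehat\nu_n - \nu_{0,n}\bigr) + \sigma_0 \log(n/k)\,\bigl(\widehat H_n - H_0\bigr) + o_{\mathbb P}(\log N/\sqrt N)
\end{align*}
shows that the second term, of order $\log(N)/\sqrt N$, dominates the first, of order $(n/k)^{H_0}/\sqrt N = \sigma_0/(\nu_{0,n}\sqrt N)$ which is of order $1/\sqrt N$ since $\nu_{0,n}$ is deterministic. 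Joint convergence of $(\sqrt N(\widehat H_n - H_0), \sqrt N(\widehat\sigma_n-\sigma_0)/\log N)$ to the degenerate limit $(X, \sigma_0 X)$ follows. The technical heart of the argument is the uniform-in-$k$ verification of the spectral estimates; once granted, the remaining steps parallel \cite{fukasawa2019volatility} and the proof of Theorem \ref{thm:decaying}.
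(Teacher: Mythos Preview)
Your approach is essentially the same as the paper's: reduce to the framework of Theorem \ref{thm:decaying} with $N$ in place of $n$, and absorb the $k$-dependence of $f_{H,k}$ via the uniform controls of Lemma \ref{lemma:slow:preaveraged_psd}. The paper's own proof is just as brief and says exactly this.

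One slip worth flagging: the intermediate CLT you write, $\sqrt N(\widehat H_n - H_0,\,\widehat\nu_n - \nu_{0,n}) \Rightarrow \mathcal N(0,I^{-1})$, is not the right normalisation for the second component. Since $\nu_{0,n} = \sigma_0(k/n)^{H_0}\to 0$, the correct statement (mirroring the paper's rescaled parameter $\widetilde\sigma_n = (n/k)^{H_0}\widehat\nu_n$) is $\sqrt N\bigl(\widehat H_n - H_0,\,(n/k)^{H_0}\widehat\nu_n - \sigma_0\bigr)\Rightarrow\mathcal N(0,J^{-1})$. Under your stated CLT, $\widehat\nu_n - \nu_{0,n}$ would be $O_P(1/\sqrt N)$ and then $(n/k)^{H_0}(\widehat\nu_n-\nu_{0,n})$ would blow up, not be $O_P(1/\sqrt N)$ as you claim. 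With the correct rescaling the delta-method step goes through exactly as you describe.
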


The best choice of sequence $k(n)$ satisfying all the hypotheses of this result is given by $k(n) = n^{2H_+/(2H_++1)} (1+o(1))$, which gives $N = n^{1/(2H_++1)}(1+o(1))$. In that case, we obtain a convergence rate $n^{1/(4H_++2)}$ for the estimation of $H$.\\

We now use this consistent estimator $\widehat{H}_n$ of $H_0$ that is asymptotically Gaussian with convergence rate $v_n = n^{1/(4H_++2)}$ to fine-tune this procedure and build an estimator with convergence rate $v_n = n^{1/(4H_0+2)}$. Notice here that we could use another consistent pilot estimator $\widehat{H}_n$, as long as it has convergence rate $v_n \to \infty$.\\

Let $q=q(n)$ be a fixed sequence of positive real numbers and $m = m(n)$ be a sequence of positive integers. Some assumptions on these sequences will be precised later on. We also introduce $U$ a new random variable independent of $W^H$ and $\xi$, uniformly distributed on $[0,1]$. We first discretise the interval $[H_-, H_+]$ with mesh $m(n)^{-1}(H_+ - H_-)$ and we take an estimate of $H_0$ on this grid using $\widehat{H}_n$:
\begin{align*}
\widehat{h}_n = H_- + \frac{\widehat{i}_n}{m(n)}( H_+ - H_- ) \text{ where } \widehat{i}_n =  \Big \lceil m(n) \frac{\widehat{H}_n - H_- + q(n)}{H_+ - H_-} + U  \Big \rceil.
\end{align*}

The presence of $q(n)$ in this expression ensures that $\widehat{h}_n > H_0$ with high probability while $U$ is used to prevent some dependency issues between the pilot estimator $\widehat{H}_n$ and the following.\\

Then we preaverage our previous data, but this time, we choose a special random sequence $
k^{\mathrm{opt}}(n) =  \lfloor n^{2\widehat{h}_n/(2\widehat{h}_n+1)}  \rfloor
$. Recall in addition that $N^{\mathrm{opt}}(n) =  \lfloor n / k^{\mathrm{opt}} \rfloor$ is now also random. 
We define a new estimator based on the preaveraging:
\begin{align*}
( \widehat{H}_n^{\mathrm{opt}}, \widehat{\nu}_n^{\mathrm{opt}} ) = \argmin U_n^{\mathrm{opt}} (H, \nu) \;\text{ and }\;
\widehat{\sigma}^{\mathrm{opt}}_n = \widehat{\nu}_n( n/k)^{\widehat{H}^{\mathrm{opt}}_n}
\end{align*}
where $U_n^{\mathrm{opt}}$ is defined as $U_n$ but using $k^{\mathrm{opt}}(n)$ instead of $k(n)$ and where the minimum is taken over $( H, \eta ) \in [H_-, \widehat{h}_n] \times  [ ( k/n)^{H_+}\sigma_-, ( k/n )^{H_-} \sigma_+  ]$. We obtain
\begin{theorem}
\label{thm:slow:optimal}
Suppose that $\log(n) q(n) \to \delta^* \in (0,\infty)$, $m(n)^{-1}\log n \to 0$ and $m(n)^{-1}v_n \to \infty$. Write
$$
\gamma^*
=
\exp
(
-2\delta^*/(2H_0+1)
)
.$$
Then 
\begin{align*}
\begin{pmatrix}
n^{1/(4H_0+2)}
( \widehat{H}^{\mathrm{opt}}_n - H_0)
\\
\frac{n^{1/(4H_0+2)}
}{\log n}
(
\widehat{\sigma}^{\mathrm{opt}}_n - \sigma_0
)
\end{pmatrix}
\to 
\begin{pmatrix}
(\gamma^*)^{-1/(4H_0+2)} X
\\
\frac{(\gamma^*)^{-1/(4H_0+2)} \sigma_0}{2H_0+1}  X
\end{pmatrix}
\end{align*}
under $\zproba_{H_0, \sigma_0}^n$ in distribution, where $X$ is a centred Gaussian variable with variance
\begin{align*}
	\frac{
	\frac{4\pi}{\sigma_0^4}
		\int_{-\pi}^\pi 
		\frac{f_{H_0, \infty}(\lambda)^2}
		{f^\ast_{H_0,\sigma_0}(\lambda)} 
		d\lambda
	}{
	(	
	\int_{-\pi}^\pi 
	\frac{f_{H_0, \infty}(\lambda)^2}
	{f^\ast_{H_0,\sigma_0}(\lambda)} 
	d\lambda
	)
	(
	\int_{-\pi}^\pi 
	\frac{\partial_H f_{H_0, \infty} (\lambda)^2}
	{f^\ast_{H_0,\sigma_0}(\lambda)} 
	d\lambda
	)
	-
	(
	\int_{-\pi}^\pi 
	\frac{\partial_H f_{H_0, \infty} (\lambda)f_{H_0, \infty}(\lambda)}
	{f^\ast_{H_0,\sigma_0}(\lambda)} 
	d\lambda
	)^2
	},
\end{align*}
where we denote
$$f^\ast_{H_0,\sigma_0}(\lambda) = ( \sigma_0^2 f_{H_0, \infty}(\lambda) + \gamma^* \tau^2 l(\lambda) )^2.$$
\end{theorem}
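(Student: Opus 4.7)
The plan is to reduce Theorem \ref{thm:slow:optimal} to a Whittle-type central limit theorem with a \emph{deterministic} preaveraging window, using the pilot estimator only to identify the right window size and the randomization by $U$ to decouple the second stage from the pilot. The key observation is that the choice $k^{\mathrm{opt}} = \lfloor n^{2\widehat h_n/(2\widehat h_n+1)} \rfloor$ makes the rescaled noise level
\[
\frac{n^{2H_0}}{(k^{\mathrm{opt}})^{2H_0+1}} \;=\; n^{-2(\widehat h_n - H_0)/(2\widehat h_n+1)}
\]
converge in probability to $\gamma^* = \exp(-2\delta^*/(2H_0+1))$, so that after multiplication by $(n/k^{\mathrm{opt}})^{2H_0}$ the spectral density $g^n_{H_0,\sigma_0}$ behaves asymptotically like $\sigma_0^2 f_{H_0,\infty}(\lambda) + \gamma^* \tau^2 l(\lambda) = \sqrt{f^\ast_{H_0,\sigma_0}(\lambda)}$, which is exactly the density appearing in the asymptotic variance.

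First I would make this precise. From Proposition \ref{propo:slow:first} one has $\widehat H_n = H_0 + O_{\mathbb P}(v_n^{-1})$ with $v_n = n^{1/(4H_++2)}$, and the assumptions $m(n)^{-1} v_n \to \infty$, $m(n)^{-1} \log n \to 0$, $\log(n) q(n) \to \delta^*$ force the chain $v_n^{-1} \ll m(n)^{-1} \ll 1/\log n \sim q(n)$. A direct computation on the definition of $\widehat h_n$ (the ceiling of a $U$-perturbed discretization with step $1/m(n)$) then yields $\widehat h_n - H_0 = q(n) + O_{\mathbb P}(m(n)^{-1} + v_n^{-1}) = q(n)(1+o_{\mathbb P}(1))$, hence $\widehat h_n > H_0$ with probability tending to one and $\log(n)(\widehat h_n - H_0) \to \delta^*$ in probability, implying the stated limit for $n^{2H_0}/(k^{\mathrm{opt}})^{2H_0+1}$.

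Next I would prove a deterministic-window version of the theorem: for any sequence $k_n \to \infty$ with $n/k_n \to \infty$ and $n^{2H_0}/k_n^{2H_0+1} \to c \in [0,\infty)$, the Whittle estimator based on preaveraging with window $k_n$ satisfies a joint CLT at rate $\sqrt{N_n}$ with asymptotic variance obtained from the inverse Fisher information of the limiting spectral density $\sigma_0^2 f_{H_0,\infty}(\lambda) + c\tau^2 l(\lambda)$, together with the customary $\log N_n$ rescaling for the $\widehat\sigma_n$ component. The proof follows the same architecture as Theorem \ref{thm:decaying} and Proposition \ref{propo:slow:first} (expansion of the score and Hessian of $U_n$, combined with the Fox--Taqqu \cite{fox1986large, fox1987central} CLTs for Gaussian quadratic forms), the only new ingredients being (i) Lemma \ref{lemma:slow:preaveraged_psd:uniform_conv} to replace $f_{H_0,k_n}$ by $f_{H_0,\infty}$ uniformly in $\lambda$, and (ii) the elementary limits $\log(n/k_n)/\log N_n \to 1$ and $\log N_n \sim \log(n)/(2H_0+1)$, which both simplify the reparametrisation $\sigma = \nu(n/k_n)^H$ and produce the factor $1/(2H_0+1)$ that replaces the $\gamma'(H_0)$ of Theorem \ref{thm:decaying}.

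To conclude I would apply this deterministic result with $k_n = k^{\mathrm{opt}}(n)$, after conditioning on $\widehat h_n$ (legitimate because $U$ is independent of $W^H$ and $\xi$, and $\widehat h_n$ lives on a finite grid of size $m(n)+1$ that concentrates on $O(1)$ values by Step 1). Slutsky then yields the stated joint limit, the prefactor $(\gamma^*)^{-1/(4H_0+2)}$ coming from $\sqrt{N^{\mathrm{opt}}} \sim n^{1/(4H_0+2)} (\gamma^*)^{1/(4H_0+2)}$. The principal obstacle is the data-dependence of $k^{\mathrm{opt}}$: the Whittle contrast $U_n^{\mathrm{opt}}$ is a functional of $\mathbf Z^n$ whose preaveraging depth is itself a functional of $\mathbf Z^n$ via $\widehat H_n$. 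Handling this cleanly will require either a careful tower-property argument on the partition $\{\widehat h_n = h\}_h$ — which hinges on the deterministic-window CLT above holding uniformly over a neighborhood of admissible $k$-values so that conditional and unconditional limits coincide — or a direct uniform-in-$k$ stability bound for the Whittle minimizer; this is the main technical point that must be executed with care, the rest of the proof being a controlled variant of the arguments behind Proposition \ref{propo:slow:first}.
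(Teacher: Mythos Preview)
Your overall architecture matches the paper's: a deterministic-window CLT (the paper's Proposition~\ref{propo:move}) followed by a reduction of the random window $k^{\mathrm{opt}}$ to a deterministic one. Your Step~1 (showing $\log(n)(\widehat h_n - H_0)\to\delta^*$) and Step~2 (the deterministic-window analogue of Proposition~\ref{propo:slow:first} with $n^{2H_0}/k_n^{2H_0+1}\to c$) are exactly what the paper does.

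The gap is in your final step. You propose to ``condition on $\widehat h_n$'' and justify this by independence of $U$; but $\widehat h_n$ also depends on $\widehat H_n$, hence on the data $\mathbf Z^n$, so conditioning on it does change the law of the second-stage observations, and the deterministic-window CLT cannot be applied on each conditional slice. Your alternative suggestion of a uniform-in-$k$ stability bound is the hard route and is not what the paper does.

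The paper's resolution is sharper and avoids both difficulties. It introduces the auxiliary index
\[
i_U \;=\; \Big\lceil m(n)\,\frac{H_0 - H_- + q(n)}{H_+-H_-} + U \Big\rceil,
\]
which depends only on $U$ and on the \emph{true} parameter $H_0$ (so it is not an estimator, just an analysis device), and shows (Lemma~\ref{lemma:optimal:2}) that $\mathbb P(\widehat i_n = i_U)\to 1$. The point is that $i_U$ takes exactly \emph{two} deterministic values $i_0$ and $i_0+1$, according to whether $U\le c_n$ or not, and both of the corresponding grid points $h_0(n),h_1(n)$ satisfy $\log(n)(h_j(n)-H_0)\to\delta^*$. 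One then applies the deterministic-window CLT (Proposition~\ref{propo:move}) to each of the two resulting estimators $(\widehat H_n^j,\widehat\sigma_n^j)$; since $U$ is independent of $\mathbf Z^n$ and both limits coincide, the conclusion follows. So the randomization by $U$ is not there to ``legitimize conditioning'' in a vague sense: its precise role is to guarantee that, with high probability, $\widehat i_n$ coincides with an index that is \emph{independent of the data} and takes only finitely many (in fact two) deterministic values.
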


\subsection{Optimality}

In the previous subsection, we have found a consistent estimator of $H_0$ and $\sigma_0$ converging respectively at rate $n^{1/(4H_0+2)}$ and
$n^{1/(4H_0+2)} \log(n)^{-1}$. The optimality of this convergence rate is proven in \cite{gloter2007estimation} when $H > 1/2$. In Theorem \ref{thm:slow:optimality} below, we  extend this result to $H\leq 1/2$. First, we say that $v_n = v_n(H)$ is a lower rate of convergence over $\Theta$ for the estimation of $H$ if there exists $c > 0$ such that 
\begin{align*}
\liminf_{n \to \infty}
\inf_{\widehat{H}}
\sup_{( H,\sigma ) \in \Theta}
\ieproba{H,\sigma}{n}
{
v_n^{-1}
 |
\widehat{H} - H
 |
\geq c
}
> 0
\end{align*}
where the infimum is taken over all estimators $\widehat{H}$. The same definition holds for $\sigma$ instead of $H$. 

\begin{theorem}
\label{thm:slow:optimality}
The rate $v_n = v_n(H) = n^{-1/(4H+2)}$ is a lower rate of convergence for the estimation of $0 < H < 1$ and $v_n = v_n(H) = n^{-1/(4H+2)}\log(n)$ is a lower rate of convergence for the estimation of $\sigma$.
\end{theorem}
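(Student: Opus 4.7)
The plan is to invoke Le Cam's two-point method applied along the direction in which the Fisher information becomes degenerate in $\log n$. Fix $\theta_0 = (H_0, \sigma_0)$ in the interior of $\Theta$, set $\delta_n = c_0 n^{-1/(4H_0+2)}$ for a small $c_0 > 0$, and define $\theta_1 = (H_0 + \delta_n, \sigma_1)$ with $\sigma_1 = \sigma_0 + \sigma_0 \delta_n \log n /(2H_0+1) + \widetilde{\eta}\, \delta_n$ for a finite constant $\widetilde{\eta}$ tuned to minimise the KL divergence. The standard two-point inequality
\begin{align*}
\inf_{\widehat{H}} \sup_{\theta \in \Theta} \mathbb{P}^n_\theta\bigl(|\widehat{H} - H| \ge \delta_n/2\bigr) \ge \tfrac{1}{2}\bigl(1 - \|\mathbb{P}^n_{\theta_0} - \mathbb{P}^n_{\theta_1}\|_{TV}\bigr),
\end{align*}
and its analogue for $\widehat{\sigma}$ (with $|\sigma_1 - \sigma_0| \asymp n^{-1/(4H_0+2)} \log n$), then yield both parts of the theorem, provided the total variation between the two Gaussian measures stays bounded away from $1$.

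Everything reduces to estimating $\mathrm{KL}(\mathbb{P}^n_{\theta_0}\,\|\,\mathbb{P}^n_{\theta_1})$. The increments of $\mathbf{Z}^n$ under $\mathbb{P}^n_{\theta}$ form a centred stationary Gaussian sequence with spectral density $f^n_{H,\sigma}(\lambda) = \sigma^2 n^{-2H} f_H(\lambda) + \tau^2 l(\lambda)$, and the Whittle-type asymptotic
\begin{align*}
\mathrm{KL}(\mathbb{P}^n_{\theta_0}\,\|\,\mathbb{P}^n_{\theta_1}) = \frac{n}{8\pi} \int_{-\pi}^\pi \Bigl(\frac{f^n_{\theta_0}(\lambda)}{f^n_{\theta_1}(\lambda)} - 1 \Bigr)^{\!2} d\lambda + o(1)
\end{align*}
linearises the KL via the signal-to-total ratio $r(\lambda) := \sigma_0^2 n^{-2H_0} f_{H_0}(\lambda)/f^n_{\theta_0}(\lambda)$ and the transition frequency $\lambda^* \asymp n^{-2H_0/(2H_0+1)}$ (at which $r(\lambda^*) \asymp 1/2$). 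Since $\partial_H \log f^n_{\theta_0} = r(-2\log n + \partial_H \log f_{H_0})$ and $\partial_\sigma \log f^n_{\theta_0} = 2r/\sigma_0$ share the common factor $r$ responsible for the degeneracy, the tuned choice of $\sigma_1$ exactly cancels the constant-in-$\lambda$ $\log n$-contribution at $\lambda = \lambda^*$, leaving a residual of order $r(\lambda) \log(\lambda/\lambda^*)$. The substitution $\lambda = \lambda^* w$ (under which $r(\lambda^* w) \to (1+ w^{1+2H_0})^{-1}$ uniformly) reduces the integrand to a fixed integrable function of $w$, so that the integral is of order $\delta_n^2 \lambda^*$. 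Hence $\mathrm{KL} = O(n \delta_n^2 \lambda^*) = O(c_0^2)$, arbitrarily small for $c_0$ small, which yields the total-variation bound through Pinsker's inequality.

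The main obstacle is rigorously justifying the Whittle-type asymptotic for the Gaussian KL in the rough regime $H < 1/2$, precisely the difficulty that prevented the extension of \cite{gloter2007estimation} beyond long-range dependence. The Toeplitz-operator estimates of Fox and Taqqu \cite{fox1986large, fox1987central} were developed for spectral densities diverging at zero ($H > 1/2$); here one encounters both the vanishing regime ($H < 1/2$) and the $n$-dependence of the signal prefactor $\sigma^2 n^{-2H}$. A uniform-in-$n$ quantitative control of the trace $\mathrm{tr}(T_n(f_{\theta_1})^{-1} T_n(f_{\theta_0}))$ and of $\det T_n(f_\theta)$ is required, with error terms small compared to the $O(1)$ scale of the leading contribution. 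I expect to handle this by working directly with the additive covariance decomposition $\Sigma_\theta = \sigma^2 \Gamma_H + \tau^2 \mathrm{Id}$ and using Sherman--Morrison--Woodbury-type resolvent identities together with Hilbert--Schmidt bounds on $\Gamma_H$, in the same spirit as the pre-averaging device underpinning the upper bound in Section \ref{subsec:slow:construction}.
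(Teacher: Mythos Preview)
Your approach is genuinely different from the paper's, and the gap you flag at the end is real and not easily closed.

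The paper does not attempt a direct KL or total-variation bound between the two Gaussian experiments via spectral methods. Instead it follows the wavelet-coupling strategy of Gloter and Hoffmann \cite{gloter2007estimation}: on an auxiliary probability space one builds two processes $\xi^{0,n},\xi^{1,n}$ from the Meyer wavelet expansion of fractional Brownian motion, sharing the same innovation sequence, whose laws approximate $\sigma_i W^{H_i}$ in total variation, together with a measurable transformation $T^n$ that couples them. The total variation between the noisy experiments is then controlled through the conditional law given the signal path, which is a trivial product-Gaussian computation (the paper's Lemma~\ref{lemma:optimal:bounds}). The only new ingredient needed to push the argument below $H=1/2$ is a H\"older-norm tightness estimate for $\xi^{1,n}(\omega)-\xi^{0,n}(T^n\omega)$ (point~(v) of Lemma~\ref{lemma:optimal:key}), which compensates for the fact that, when the paths are rougher than $1/2$-H\"older, one cannot replace the discrete sum $\sum_k(f(k/n)-g(k/n))^2$ by $n\|f-g\|_2^2$ without an extra $n^{1-\alpha}\|f-g\|_{\mathcal H^\alpha}^2$ correction. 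This estimate follows from elementary decay bounds on the wavelet functions and is short.

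Your spectral route is natural and the heuristic scaling (transition frequency $\lambda^*\asymp n^{-2H_0/(2H_0+1)}$, integrand collapsing to a fixed function after rescaling) is correct, but the obstacle you name is substantial. The Whittle approximation to the Gaussian KL carries error terms of the form $\log\det \Sigma_n(f^n_\theta)-\tfrac{n}{2\pi}\int\log f^n_\theta$ and $\mathrm{tr}(\Sigma_n(f^n_{\theta_1})^{-1}\Sigma_n(f^n_{\theta_0}))-\tfrac{n}{2\pi}\int f^n_{\theta_0}/f^n_{\theta_1}$; for an $n$-dependent spectral density with a noise floor $\tau^2 l(\lambda)$ and a signal component of size $n^{-2H}|\lambda|^{1-2H}$, neither the strong Szeg\H{o} theorem nor the Fox--Taqqu estimates apply, and you need these errors to be $o(1)$, not merely $o(n)$. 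The Sherman--Morrison--Woodbury idea does not obviously help, since $\Gamma_H$ is full rank with slowly decaying eigenvalues, and Hilbert--Schmidt bounds on $\Gamma_H$ typically yield errors of the wrong order in $n$. I do not know of a place where this direct computation has been carried out for the noisy fBm model with $H<1/2$; the wavelet construction exists precisely to sidestep it.
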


\section{Proof of Theorem \ref{thm:decaying}}
\label{sec:decaying:proof}

\subsection{Outline of the proof}
\label{subsec:decaying:outline}

Recall from Equation \eqref{eq:decaying:wle} that $( \widehat{H}_n, \widehat{\sigma}_n )$ is defined through the parametrisation $( H, \nu )$ by
\begin{align*}
( \widehat{H}_n, \widehat{\nu}_n ) = \argmin U_n
\text{ and } \widehat{\sigma}_n = n^{\gamma( \widehat{H}_n )}\widehat{\nu}_n
\end{align*}
where the minimum is taken over $[H_-, H_+] \times  [ \sigma_-n^{-\gamma( H_+)}, \sigma_+n^{-\gamma( H_-)}  ]$.
But $\sigma_0n^{-\gamma( H_0)}$ depends on $n$ and converges to $0$. Therefore, following \cite{fukasawa2019volatility}, we introduce a rescaled process $\mathbf{\widetilde{Y}}^{n}$ by
\begin{align*}
\mathbf{\widetilde{Y}}^{n} = n^{\gamma( H_0)} {\mathbf{Y}}^{n} = ( n^{\gamma( H_0)} Y_1^{n}, \dots, n^{\gamma( H_0)}Y_n^{n}).
\end{align*}
This process $\mathbf{\widetilde{Y}}^n$ is unobservable since $H_0$ is not known a priori. Its spectral density is given by:
\begin{align*}
\widetilde{g}^{n}_{H,\widetilde \nu}(\lambda) = \widetilde{\nu}^2 f_H(\lambda) + n^{2\gamma( H_0)} \tau_n^2 l(\lambda) \;\text{ with }\; \widetilde{\nu} := n^{\gamma( H_0)} \nu.
\end{align*}
We also define the Whittle oracle likelihood with respect to $( H, \widetilde{\nu})$ with unobserved process $\widetilde{\mathbf{Y}}^{n}$:
\begin{align}
\label{eq:decaying:Utilde}
\widetilde{U}_n 
( 
H, \widetilde \nu
)
=
\frac{1}{4\pi}
\int_{-\pi}^\pi
\log 
( 
\widetilde{g}^{n}_{H,\widetilde \nu}(\lambda)
)
+
\frac{
I_n\big( \lambda, \mathbf{\widetilde{Y}}^n \big) 
}{
\widetilde{g}^{n}_{H,\widetilde \nu}(\lambda)
}
d\lambda.
\end{align}
It is defined for $( H, \widetilde \nu ) \in \widetilde \Theta^{n} =  [ H_-, H_+  ] \times   [ \sigma_-n^{\gamma(H_0)-\gamma(H_+)}, \sigma_+n^{\gamma(H_0)-\gamma(H_-)}  ]$. The main properties of $\widetilde{U}_n$ are summarized in the following proposition:
\begin{proposition}
\label{propo:decaying:likelihood}
Let $K$ be a compact interval of $(0,\infty)$ and $\delta > 0$. Let also $\nabla$ stand for $\begin{pmatrix}
\partial_H \\ \partial_{\widetilde \nu}
\end{pmatrix}$. We define 
$H_-(\delta) = \inf  \{ H \in  [H_-, H_+  ] :  \alpha(H) \geq \alpha(H_0) - 1 + \delta  \}
$ for $\delta > 0$ and
\begin{align}
\label{eq:decaying:tilde_U_infty}
\widetilde{U}_\infty
( 
H, \widetilde \nu
)
=
\frac{1}{4\pi}
\int_{-\pi}^\pi
\log 
( 
\widetilde \nu^2 f_H(\lambda)
)
+
\frac{
\sigma_0^2 f_{H_0}(\lambda)
}{
\widetilde \nu^2 f_H(\lambda)
}
d\lambda.
\end{align}
Then we have:
\begin{enumerate}[label=(\roman*), ref=\concatenate{\ref{propo:decaying:likelihood}}{{(\roman*)}}]
  \setlength{\itemsep}{0pt}
  \setlength{\parskip}{0pt}
\item \label{propo:decaying:conv_like}
$\widetilde{U}_n \to \widetilde{U}_\infty$ uniformly on $ [ H_-(\delta ), H_+  ] \times K$ under $\zproba_{( H_0, \sigma_0 )}^{n}$.
\item \label{propo:decaying:conv_nabla_like} 
$\sqrt{n} \nabla \widetilde{U}_n (H_0,\sigma_0) \to \gaussian{0}{J}
$ in distribution under $\zproba_{( H_0, \sigma_0 )}^{n}$, where $J = \nabla^2 \widetilde{U}_\infty (H_0,\sigma_0)$.
\item \label{propo:decaying:conv_nabla2_like}
$\nabla^2\widetilde{U}_n \to \nabla^2\widetilde{U}_\infty$ uniformly on $ [ H_-(\delta ), H_+  ] \times K$ under $\zproba_{( H_0, \sigma_0 )}^{n}$.
\end{enumerate}
\end{proposition}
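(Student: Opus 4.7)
The three claims all amount to analyzing integrals of the forms
$$\int_{-\pi}^\pi \phi_n(\lambda, H, \widetilde\nu)\, d\lambda \quad\text{and}\quad \int_{-\pi}^\pi \phi_n(\lambda, H, \widetilde\nu)\, I_n(\lambda, \widetilde{\mathbf Y}^n)\, d\lambda,$$
where $\phi_n$ is built from $\widetilde g^n_{H,\widetilde\nu}$ and its derivatives of order up to $2$ in each variable. The plan is to split each quantity into its deterministic part and its quadratic-form part in the Gaussian vector $\widetilde{\mathbf Y}^n$, and exploit the fact that under $\zproba^n_{H_0,\sigma_0}$ the spectral density of $\widetilde{\mathbf Y}^n$ equals $\sigma_0^2 f_{H_0}(\lambda) + n^{2\gamma(H_0)}\tau_n^2 l(\lambda)$, whose noise contribution vanishes: since $(n^{\gamma(H_+)}\tau_n)_n$ is bounded and $\gamma(H_0)<\gamma(H_+)$, we have $n^{2\gamma(H_0)}\tau_n^2 \to 0$, and Assumption \ref{assumption:decaying:noise} provides $l(\lambda)\leq c_4\lambda^2$, so the noise never alters the $\lambda=0$ singularity of $f_H$.

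For (i), I would write $\widetilde U_n = A_n + B_n$, splitting the $\log \widetilde g^n$ integral from the periodogram integral. Uniform convergence $A_n \to A_\infty$ on $[H_-(\delta),H_+]\times K$ follows from dominated convergence using the two-sided bound \ref{assumption:decaying:psd:bounds} on $f_H$ together with the decay of the noise term. For $B_n$, I would decompose as $\EX{B_n} + (B_n - \EX{B_n})$: the bias part converges uniformly via the standard approximation of the expected periodogram by the spectral density, and the centered part is a Gaussian quadratic form whose variance is shown to vanish uniformly using Fox--Taqqu-type bounds \cite{fox1986large}. The constraint $\alpha(H) \geq \alpha(H_0) - 1 + \delta$ is exactly what guarantees integrability of $f_{H_0}/f_H$ near $\lambda=0$, hence finiteness of the limiting quantities. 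The argument for (iii) is entirely parallel, now applied to integrals involving $\partial^2 \log \widetilde g^n$ and $\partial^2(1/\widetilde g^n)$, whose integrability is given by the derivative bounds \ref{assumption:decaying:psd:bounds_derf}.

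For (ii), I would compute $\nabla \widetilde U_n(H_0,\sigma_0)$ explicitly as a linear combination of deterministic terms and integrals $\int \psi_n(\lambda) I_n(\lambda,\widetilde{\mathbf Y}^n)\, d\lambda$ with $\psi_n$ built from $f_{H_0}$ and its first derivatives. After centering by the expected periodogram, each component becomes a quadratic form in the Gaussian vector $\widetilde{\mathbf Y}^n$ to which the CLT of Fox and Taqqu \cite{fox1987central} applies, once one checks the required integrability of $\psi_n \cdot f_{H_0}$ and $\psi_n^2 \cdot f_{H_0}^2$ on $[-\pi,\pi]$, which follows from Assumption \ref{assumption:decaying:psd}. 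The asymptotic covariance is identified with $J = \nabla^2 \widetilde U_\infty(H_0,\sigma_0)$ by direct computation on the limit density $\sigma_0^2 f_{H_0}$, using the classical Whittle identity relating the score variance to the Hessian of the limiting contrast.

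The main technical obstacle throughout is the uniform control of the $\lambda=0$ singularity of $f_H$ and of its derivatives as $H$ varies, particularly near the boundary where $\alpha(H)$ approaches $\alpha(H_0) - 1$; the margin $\delta>0$ in the definition of $H_-(\delta)$ is precisely what allows uniform domination in the $L^1$ bounds. A secondary subtlety is that $\widetilde g^n$ depends on $n$ through the noise term, so one must verify that every estimate is uniform in $n$; this follows from the vanishing of $n^{2\gamma(H_0)}\tau_n^2$ together with the quadratic control on $l$, which ensures the noise contribution is absorbed into lower-order remainders.
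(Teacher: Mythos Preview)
Your plan matches the paper's: split into the deterministic log-integral and the periodogram quadratic form, control the former by direct bounds from Assumption~\ref{assumption:decaying:psd} together with $n^{2\gamma(H_0)}\tau_n^2\to 0$, and handle the latter via Fox--Taqqu convergence of Toeplitz quadratic forms (the paper isolates this as a separate proposition on $Q_n(\varphi^n,\widetilde{\mathbf Y}^n)$, with a compactness/covering argument plus a mean-value bound on $\partial_{H,\widetilde\nu}\varphi^n$ to upgrade pointwise $L^2$ convergence to uniform convergence in probability). Part~(iii) is indeed parallel to~(i), and the paper omits it for the same reason.

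The one place your sketch is thin is~(ii). After centering the quadratic form by its expectation, there remains a deterministic residual
\[
\frac{1}{2\pi}\int_{-\pi}^{\pi}\frac{\partial\,\widetilde g^n_{H_0,\sigma_0}(\lambda)}{\widetilde g^n_{H_0,\sigma_0}(\lambda)}\,d\lambda \;-\; \ieEX{H_0,\sigma_0}{n}{Q_n\bigl(\widetilde{\mathbf Y}^n, h^n_\partial\bigr)},\qquad h^n_\partial=\frac{\partial\,\widetilde g^n_{H_0,\sigma_0}}{(\widetilde g^n_{H_0,\sigma_0})^2},
\]
which must be shown to be $o(n^{-1/2})$. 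This is a Fej\'er-kernel bias term and does not fall out of any ``classical Whittle identity''; the paper handles it by writing both sides as Fourier series, estimating the decay of the Fourier coefficients of $\widetilde g^n_{H_0,\sigma_0}$ and of $h^n_\partial$ via Lemma~\ref{lemma:fukasawa:E12} (this is where the $\partial_\lambda$-bounds in \ref{assumption:decaying:psd:bounds_derf} are used), and summing the tail and Ces\`aro remainder explicitly to get $O(n^{\delta-1})$. Your invocation of ``standard approximation of the expected periodogram by the spectral density'' is the right instinct but you should be aware that the required rate is the non-obvious part here.
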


The proofs of the first two points are gathered in section \ref{subsec:decaying:proof_score}, while the third one, being very similar to the first one, is omitted. 

Notice that $
( \widehat{H}_n, n^{\gamma ( H_0 )} \widehat{\nu}_n )  = 
\argmin \widetilde U_n
$ on $ \Theta^{n} $ so the properties of $( \widehat{H}_n, \widehat{\nu}_n )  $ and therefore those of $( \widehat{H}_n, \widehat{\sigma}_n )  $ can be deduced from $( \widehat{H}_n, n^{\gamma ( H_0 )} \widehat{\nu}_n )$. We write $\widetilde{\sigma}_n = n^{\gamma ( H_0 )} \widehat{\nu}_n$ for conciseness. Unfortunately, we cannot study directly $\widetilde{\sigma}_n$ because the minimisation on the whole set $\widetilde \Theta^{n}$ involves some technical difficulties:
\begin{tight_itemize}
\item The set $\widetilde \Theta^{n}$ is compact for each $n$, however, it depends on $n$ and it is not bounded uniformly in $n$: $\cup_n \widetilde \Theta^{n} =  [ H_-, H_+  ] \times (0, \infty)$. We correct this by restricting the values for $\widetilde \nu$ to a compact uniform in $n$ and we will deal separately with the remaining part of the set $\widetilde \Theta^{n}$.
\item The proof relies on convergence of some quadratic forms, such as those studied by Fox and Taqqu (for instance see \cite{fox1986large} and \cite{fox1987central}). These results depend crucially on the control we have at $0$ on the spectral density of the process considered. Therefore, we have to split the space of eligible $H$ into two subcategories: those which can be handled with these tools and those which cannot. The frontier lays at $\alpha(H) = \alpha(H_0) - 1$ and justifies the definition of $H_-(\delta)$ in Proposition \ref{propo:decaying:likelihood}.
\end{tight_itemize}

In view of these remarks, let $L > \max ( \sigma_0, \sigma_0^{-1} )$ and $\delta > 0$. We restrict the space $\widetilde{\Theta}^{n}$ to 
\begin{align}
\label{eq:decaying:boundedspace}
\widetilde{\Theta}^{n}( \delta, L ) 
=
 [ H_-(\delta ), H_+  ] \times 
(   [ \sigma_-n^{\gamma(H_0)-\gamma(H_+)}, \sigma_+n^{\gamma(H_0)-\gamma(H_-)}  ] \cap  [ L^{-1},  L ]
).
\end{align}
We also introduce a new couple $\widehat{\theta}^{(1)}_n (\delta, L) = ( \widehat{H}^{(1)}_n(\delta, L),\widehat{\sigma}^{(1)}_n(\delta, L) )$ by
\begin{align}
\label{eq:decaying:boundedest}
\widehat{\theta}^{(1)}_n (\delta, L) = ( \widehat{H}^{(1)}_n(\delta, L),\widehat{\sigma}^{(1)}_n(\delta, L) )
=
\argmin_{( H, \widetilde{\nu} ) \in \widetilde{\Theta}^{n}( \delta, L ) }
\widetilde{U}_n ( H, \widetilde{\nu} )
\end{align}
where the minimum is taken on $\widetilde{\Theta}^{n}( \delta, L ) $. In the following we will write $
\widehat{\theta}^{(1)}_n$ for $\widehat{\theta}^{(1)}_n(\delta, L)$ when the context is clear. The proof of Theorem \ref{thm:decaying} is now reduced to the three following steps:
\begin{tight_itemize}
\item \textit{\textbf{Step 1.}} We show that for any $\delta > 0$ and for $L$ large enough, $\widehat{\theta}^{(1)}_n (\delta, L) \to ( H_0, \sigma_0 )$ in distribution under $\zproba_{H_0, \sigma_0}^{n}$, using mostly Proposition \ref{propo:decaying:conv_like}.
\item \textit{\textbf{Step 2.}} We show a central limit theorem for $(\widehat{H}_n, \widetilde{\sigma}_n )$ by technical computations.
\item \textit{\textbf{Step 3.}} We get back to $( \widehat{H}_n, \widehat{\sigma}_n )$ by elementary manipulation.
\end{tight_itemize}

\subsection{Completion of the proof of Theorem \ref{thm:decaying}}
\label{subsec:decaying:completion}

\paragraph{Step 1.}
First take $n$ is large enough so that $\widetilde{\Theta}^{n}( \delta, L ) 
=
 [ H_-(\delta ), H_+  ] \times 
 [ L^{-1},  L ]
$ does not depend on $n$. For $\varepsilon > 0$, we write $\widetilde{\Theta}_\varepsilon^{n}( \delta, L ) = \{( H, \widetilde{\nu} ) 
		\in 
		\widetilde{\Theta}^{n}( \delta, L ) : \norm{( H, \widetilde{\nu} ) - ( H_0, \sigma_0 )} \geq \varepsilon \}$. Then we have:
\begin{align*}
\ieproba{H_0, \sigma_0}{n}{
	(
			\widehat{H}^{(1)}_n(\delta, L), 
			\widehat{\sigma}^{(1)}_n(\delta, L)
	)
	\in
	\widetilde{\Theta}_\varepsilon^{n}( \delta, L )
}
\leq 
\zproba_{H_0, \sigma_0}^{n}
\Bigg(
	\inf_{
		( H, \widetilde{\nu} ) 
		\in 
		\widetilde{\Theta}_\varepsilon^{n}( \delta, L )
		}
	\widetilde{U}_n ( H, \widetilde{\nu} ) 
	\leq 
	\widetilde{U}_n (H_0, \sigma_0)
	\Bigg).
\end{align*}
Using Proposition \ref{propo:decaying:conv_like}, this converges toward:
\begin{align*}
\zproba_{H_0, \sigma_0}^{n}
\Bigg(
	\inf_{
		( H, \widetilde{\nu} ) 
		\in 
\widetilde{\Theta}_\varepsilon^{n}( \delta, L )
	} 
	\widetilde{U}_\infty ( H, \widetilde{\nu} ) 
	\leq 
	\widetilde{U}_\infty (H_0, \sigma_0)
	\Bigg)
\end{align*}
which is null as the inequality $\log x < x - 1$ for $x \neq 1$ implies :
\begin{align*}
\inf_{
		( H, \widetilde{\nu} ) 
		\in 
\widetilde{\Theta}_\varepsilon^{n}( \delta, L )	} 
	\widetilde{U}_\infty ( H, \widetilde{\nu} ) 
	<
	\widetilde{U}_\infty (H_0, \sigma_0).
\end{align*}

\paragraph{Step 2}

We start by showing that $( \widehat{H}_n ,
\widetilde{\sigma}_n ) \to ( H_0 ,
\sigma_0 ) $ in distribution  under $\zproba_{H_0, \sigma_0}^{n}$. Take $n$ big enough so that $\widetilde{\Theta}^{n}( \delta, L ) 
=
 [ H_-(\delta ), H_+  ] \times 
 [ L^{-1},  L ]
$. By Step $1$, it is enough to show that
\begin{align*}
\zproba_{H_0, \sigma_0}^{n}
\big(
	\big(
			\widehat{H}^{(1)}_n(\delta, L) ,
			\widehat{\sigma}^{(1)}_n(\delta, L)
	\big)
			\neq 
		\big(
			\widehat{H}_n
		, \widetilde{\sigma}_n
		\big)
\big) \to 0
\end{align*}
for some $\delta > 0$ and $L$ big enough. But this quantity
is necessarily bounded by
\begin{align*}
\zproba_{H_0, \sigma_0}^{n}
\Bigg(
	\inf_{
		( H, \widetilde{\nu} ) 
		\in 
		\widetilde{\Theta}^{n}
		\backslash
		\widetilde{\Theta}^{n}( \delta, L ) 
	} 
	\widetilde{U}_n ( H, \widetilde{\nu} ) 
\leq
	\inf_{
		( H, \widetilde{\nu} ) 
		\in 
		\widetilde{\Theta}^{n}( \delta, L ) 
	} 
	\widetilde{U}_n ( H, \widetilde{\nu} ) 
\Bigg).
\end{align*}
By definition,
$
\inf_{
		( H, \widetilde{\nu} ) 
		\in 
		\widetilde{\Theta}^{n}( \delta, L ) 
	} 
	\widetilde{U}_n ( H, \widetilde{\nu})
	=
	\widetilde{U}_n ( \widehat{\theta}^{(1)}_n (\delta, L)
)
$. Moreover, by the triangle inequality:
\begin{align*}
 \Big|
	\widetilde{U}_n \big( \widehat{\theta}^{(1)}_n \big)
	-
	\widetilde{U}_\infty \big( H_0, \sigma_0 \big)
 \Big|
\leq 
 \Big|
	\widetilde{U}_n \big( \widehat{\theta}^{(1)}_n \big)
	-
	\widetilde{U}_\infty \big( \widehat{\theta}^{(1)}_n \big)
\Big |
+
 \Big|
	\widetilde{U}_\infty \big( \widehat{\theta}^{(1)}_n \big)
	-
	\widetilde{U}_\infty \big( H_0, \sigma_0 \big)
\Big |.
\end{align*}
The first term converges to $0$ by Proposition \ref{propo:decaying:conv_like} and the second is negligible as well by Step 1 and continuity of $\widetilde{U}_\infty$. Thus for any $\varepsilon > 0$, we have
\begin{align*}
\zproba_{H_0, \sigma_0}^{n}
\Big(
	 \big|
		\widetilde{U}_n ( \widehat{\theta}^{(1)}_n )
		-
		\widetilde{U}_\infty ( H_0, \sigma_0 )
	 \big|
	> 
	\varepsilon
\Big)
\to 
0.
\end{align*}
Then we write $		\widetilde{\Theta}^{n}
		\backslash
		\widetilde{\Theta}^{n}( \delta, L ) 
$
as the union of four disjoint subspaces and we deal with each of them separately in the following lemma that guarantees the convergence $( \widehat{H}_n ,
\widetilde{\sigma}_n ) \to ( H_0 ,
\sigma_0 ) $:

\begin{lemma}
\label{lemma:decaying:technical}
Let $\varepsilon > 0$ be fixed For any set $A \subset [H_-, H_+] \times (0,\infty)$, we write 
\begin{align*}
\salS( A )
=
\zproba_{H_0, \sigma_0}^{n}
\bigg(
	\inf_{
		( H, \widetilde{\nu} ) 
		\in 
		A
	} 
	\widetilde{U}_n ( H, \widetilde{\nu} ) 
\leq
	\widetilde{U}_\infty ( H_0, \sigma_0 ) + \varepsilon
\bigg).
\end{align*}
\begin{enumerate}[label=(\roman*), ref=\concatenate{\ref{lemma:decaying:technical}}{{(\roman*)}}]
  \setlength{\itemsep}{0pt}
  \setlength{\parskip}{0pt}
\item \label{lemma:decaying:technical:1}
For $L>1$ and $\varepsilon > 0$ fixed, $\salS(  [ H_-; H_-(\delta)  ]
		\times 
		 [ L^{-1}, L  ] ) \to 0$ for all $\delta$ small enough.

\item \label{lemma:decaying:technical:2}
For $\varepsilon > 0$ fixed, $
\salS( 
		 [ H_-( \delta ), H_+  ] \times  [ \sigma_-n^{\gamma ( H_0 )-\gamma ( H_+)}, L^{-1}  ]
)\to
0
$ for all $L$ large enough.

\item \label{lemma:decaying:technical:3}
For $L>1$ and $\varepsilon > 0$ fixed $
\salS( 
		 [ H_-,  H_-( \delta )  ] \times  [ \sigma_-n^{\gamma ( H_0 )-\gamma ( H_+)}, L^{-1}  ]
)\to
0
$ for all $\delta$ small enough.

\item \label{lemma:decaying:technical:4}
For $\varepsilon > 0$ fixed, $
\salS( 
		 [ H_-, H_+  ]  \times  [ L, \sigma_+n^{\gamma ( H_0 )-\gamma ( H_-)}  ]
)\to
0
$ for all $L$ large enough.
\end{enumerate}
\end{lemma}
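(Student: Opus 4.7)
The plan is to bound $\widetilde U_n$ from below on each of the four regions by a quantity that exceeds $\widetilde U_\infty(H_0,\sigma_0)+\varepsilon$ with probability tending to $1$. Three ingredients will recur: the elementary inequality $\log x+y/x\geq \log y+1$ for $x,y>0$ (equivalently $\phi(u):=u-1-\log u\geq 0$), the uniform bounds $c_1|\lambda|^{-\alpha(H)}\leq f_H(\lambda)\leq c_2|\lambda|^{-\alpha(H)}$ and $l(\lambda)\leq c_4\lambda^2$ of Assumptions \ref{assumption:decaying:psd:bounds} and \ref{assumption:decaying:noise}, and the Fox--Taqqu type limits for Gaussian quadratic forms that already underlie Proposition \ref{propo:decaying:likelihood}.

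For case (iv) the logarithmic term alone suffices. Since $\widetilde g^n_{H,\widetilde\nu}\geq\widetilde\nu^2 f_H$ and $I_n\geq 0$,
\begin{align*}
\widetilde U_n(H,\widetilde\nu)\;\geq\;\log\widetilde\nu+\frac{1}{4\pi}\int_{-\pi}^\pi\log f_H(\lambda)\,d\lambda,
\end{align*}
and the integral is uniformly bounded in $H$ by Assumption \ref{assumption:decaying:psd:bounds}, so for $L$ large the right-hand side dominates the threshold deterministically. For case (ii), where $H\geq H_-(\delta)$ and $\widetilde\nu\leq L^{-1}$, I would use the periodogram term. The restriction $H\geq H_-(\delta)$ keeps the relevant spectral ratios integrable near $0$, so a Fox--Taqqu type estimate yields, uniformly on this region,
\begin{align*}
\frac{1}{4\pi}\int\frac{I_n(\lambda,\widetilde{\mathbf Y}^n)}{\widetilde g^n_{H,\widetilde\nu}(\lambda)}\,d\lambda\;=\;\frac{1}{4\pi}\int\frac{\widetilde g^n_{H_0,\sigma_0}(\lambda)}{\widetilde g^n_{H,\widetilde\nu}(\lambda)}\,d\lambda+o_{\mathbb P}(1).
\end{align*}
Applying the elementary inequality pointwise with $x=\widetilde g^n_{H,\widetilde\nu}$ and $y=\widetilde g^n_{H_0,\sigma_0}$ gives
\begin{align*}
\widetilde U_n(H,\widetilde\nu)\;\geq\;\widetilde U_\infty(H_0,\sigma_0)+\frac{1}{4\pi}\int\phi\!\left(\frac{\widetilde g^n_{H,\widetilde\nu}(\lambda)}{\widetilde g^n_{H_0,\sigma_0}(\lambda)}\right)d\lambda+o_{\mathbb P}(1),
\end{align*}
where I have used that $\widetilde U_\infty(H_0,\sigma_0)=\tfrac{1}{4\pi}\int[\log\widetilde g^n_{H_0,\sigma_0}+1]\,d\lambda+o(1)$, since $n^{2\gamma(H_0)}\tau_n^2$ is bounded and $l(\lambda)\leq c_4\lambda^2$. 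For $\widetilde\nu\leq L^{-1}$ the ratio inside $\phi$ is uniformly small on a set of positive measure, and since $\phi(u)\to\infty$ as $u\downarrow 0$ the integral exceeds $\varepsilon$ for $L$ large.

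Cases (i) and (iii) both involve $H\leq H_-(\delta)$, where the singularity gap $\alpha(H_0)-\alpha(H)\geq 1-\delta$ makes $\widetilde g^n_{H_0,\sigma_0}/\widetilde g^n_{H,\widetilde\nu}$ non-integrable near $0$ in the noise-free limit, so Fox--Taqqu does not apply directly. My plan is to truncate the integration at some small $\eta>0$: on $|\lambda|\geq\eta$ I run the case (ii) argument and pick up a nonnegative contribution; on $|\lambda|\leq\eta$ I compute the expected value of $I_n/\widetilde g^n_{H,\widetilde\nu}$ from the explicit pointwise bounds of Assumptions \ref{assumption:decaying:psd:bounds} and \ref{assumption:decaying:noise}, and control the deviation from expectation via a Fox--Taqqu type variance estimate combined with Markov's inequality. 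The expected contribution diverges as $\eta\downarrow 0$ because the integrand behaves like $|\lambda|^{-1+\delta}$ near $0$ in case (i), and even more strongly when $\widetilde\nu$ is small in case (iii).

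The hardest point will be case (iii), in which the singularity gap and the non-compactness of the $\widetilde\nu$-range interact: the three parameters $\delta$, $\eta$ and $L$ must be tuned in the right order to prevent one source of divergence from offsetting the other. The natural strategy is to fix $\delta$ small first to exploit the singularity gap, then choose $L$ large to handle the small-$\widetilde\nu$ regime on the complementary frequency range $|\lambda|\geq\eta$, before finally taking $\eta$ small and letting $n\to\infty$ to absorb the $o_{\mathbb P}(1)$ residuals.
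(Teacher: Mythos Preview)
Your handling of case (iv) is correct and matches the paper. The other three cases have real problems.

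For (ii), the step ``a Fox--Taqqu type estimate yields, uniformly on this region, $\int I_n/\widetilde g^n_{H,\widetilde\nu}=\int \widetilde g^n_{H_0,\sigma_0}/\widetilde g^n_{H,\widetilde\nu}+o_{\mathbb P}(1)$'' is not justified: the region allows $\widetilde\nu$ to go down to $\sigma_-n^{\gamma(H_0)-\gamma(H_+)}\to 0$, and the quadratic form $\int I_n/\widetilde g^n_{H,\widetilde\nu}$ blows up like $\widetilde\nu^{-2}$, so the $o_{\mathbb P}(1)$ cannot be uniform. The uniform version of Fox--Taqqu (Proposition \ref{propo:decaying:conv_quad:uniform}) requires the compact $K$ that you do not have here.

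For (i) and (iii) your truncation scheme breaks: you write that on $|\lambda|\le\eta$ ``the expected contribution diverges as $\eta\downarrow 0$ because the integrand behaves like $|\lambda|^{-1+\delta}$'', but $|\lambda|^{-1+\delta}$ is \emph{integrable} at $0$, so $\int_{|\lambda|\le\eta}|\lambda|^{-1+\delta}\,d\lambda=2\eta^\delta/\delta\to 0$ as $\eta\to 0$ for fixed $\delta$. With your declared order of limits (fix $\delta$, then send $\eta\to 0$) the small-frequency contribution vanishes instead of diverging, and the $\phi$-contribution from $|\lambda|\ge\eta$ is bounded, so you never exceed the threshold by $\varepsilon$.

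The paper's device avoids both issues by the same trick: replace the $(H,\widetilde\nu)$-dependent denominator $\widetilde g^n_{H,\widetilde\nu}$ by a single \emph{fixed} weight and apply Fox--Taqqu only once. In (ii) one uses $f_H(\lambda)+(\text{noise})\le c|\lambda|^{-\alpha(H)}\le c'|\lambda|^{-\alpha(H_+)}$ uniformly in $H$, getting $\widetilde U_n(H,\widetilde\nu)\ge (c/\widetilde\nu^2)\int|\lambda|^{\alpha(H_+)}I_n\,d\lambda+\log\widetilde\nu+C^*$; the right-hand side depends on $(H,\widetilde\nu)$ only through $\widetilde\nu$ and on the randomness only through the single random variable $X_n:=\int|\lambda|^{\alpha(H_+)}I_n\,d\lambda$, so one minimises over $\widetilde\nu$ explicitly and then shows $X_n$ is bounded away from $0$ in probability. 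In (i) and (iii) the analogue is $|\lambda|^{\alpha(H)}\ge c|\lambda|^{\alpha(H_0)-1+\delta}$ for $H\le H_-(\delta)$, giving the fixed weight $|\lambda|^{\alpha(H_0)-1+\delta}$; Proposition \ref{propo:decaying:conv_quad:simple} applies because $\beta=1-\alpha(H_0)-\delta<1-\alpha(H_0)$, and the limit $\int|\lambda|^{\alpha(H_0)-1+\delta}f_{H_0}(\lambda)\,d\lambda\ge c\int_0^\pi\lambda^{-1+\delta}\,d\lambda\sim c/\delta$ diverges as $\delta\to 0$. The divergence comes from $\delta\to 0$ over the \emph{full} range $[-\pi,\pi]$, not from shrinking a truncation window.
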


The proofs of these results are delayed until section \ref{subsec:decaying:technical}. We now turn to the the proof of the asymptotic behavior of $( \widehat{H}_n, \widetilde{\sigma}_n )$. By Taylor formula, we have:
\begin{align*}
\nabla \widetilde U_n (H, \widetilde \nu)
=
\widetilde U_n (H_0, \widetilde \nu_0)
+
\int_0^1 
\nabla^2 \widetilde U_n ( 
H_0 + u (H - H_0),
\widetilde \nu_0 + u (\widetilde \nu - \widetilde \nu_0)
)
\cdot
\begin{pmatrix}
H - H_0
\\
\widetilde \nu - \widetilde \nu_0
\end{pmatrix}
du
\end{align*}
whenever $( H, \widetilde \nu ) \in \widetilde{\Theta}^{n}$. We apply this to $( H, \widetilde \nu ) = ( \widehat{H}_n, n^{ \gamma( H_0 )} \widehat{\nu}_n ) =
\argmin \widetilde U_n$ on $\widetilde{\Theta}^{n}$. Thus $\nabla \widetilde U_n ( \widehat{H}_n, n^{ \gamma( H_0 )} \widehat{\nu}_n )= 0$ whenever $( \widehat{H}_n, n^{ \gamma( H_0 )} \widehat{\nu}_n )$ is in the interior of 
$\widetilde{\Theta}^{n}$. We already know that $( \widehat{H}_n, n^{ \gamma( H_0 )} \widehat{\nu}_n ) \to ( H_0, \sigma_0)$ which is in this interior so it is asymptotically realised.

Moreover, Proposition \ref{propo:decaying:conv_nabla2_like}, continuity of $\nabla^2 \widetilde U_\infty $ and $( \widehat{H}_n, n^{ \gamma( H_0 )} \widehat{\nu}_n ) \to ( H_0, \sigma_0)$ show that
$
\nabla^2 \widetilde U_n ( 
H_0 + u (\widehat{H}_n - H_0),
\widetilde \nu_0 + u (\widetilde \sigma_n - \sigma_0)
)
-
\nabla^2 \widetilde U_\infty ( 
H_0,\sigma_0
)
$ is bounded in $\zproba_{H_0, \sigma_0}^{n}$-probability uniformly for $0 \leq u \leq 1$. Recall $J = \nabla^2 \widetilde U_\infty ( 
H_0,\sigma_0
)$, so that by Proposition \ref{propo:decaying:conv_nabla_like}, we have:
\begin{align}
\label{eq:decaying:CLT_oracle}
\sqrt{n} 
\begin{pmatrix}
\widehat{H}_n - H_0
\\
 \widetilde \sigma_n - \sigma_0
\end{pmatrix}
\to 
J^{-1} \gaussian{0}{J} \sim \gaussian{0}{J^{-1}} \text{ under } \zproba_{H_0, \sigma_0}^{n}.
\end{align}

\paragraph{Step 3.}

We are ready to conclude the proof of Theorem \ref{thm:decaying}. By \eqref{eq:decaying:CLT_oracle}, the convergence $\gamma ( \widehat{H}_n ) -  \gamma( H_0 ) \to \gaussian{0}{\gamma' ( H_0 )^2 ( J^{-1} )_{1,1}}$ holds in distribution. Thus we have:
\begin{align*}
\frac{\sqrt{n}}{\log n} \big( \widehat{\sigma}_n - \sigma_0 \big)
&=
\frac{\sqrt{n}}{\log n} \big( \widetilde{\sigma}_n - \sigma_0 \big)
+
\frac{\sqrt{n}}{\log n} \big( \widehat{\sigma}_n -  \widetilde{\sigma}_n \big)
\\
&=
\frac{\sqrt{n}}{\log n} \big( \widetilde{\sigma}_n - \sigma_0 \big)
+
\frac{\sqrt{n}\widetilde{\sigma}_n }{\log n} \big( n^{\gamma ( \widehat{H}_n ) -  \gamma( H_0 )} - 1 \big) 
\\
&=
\frac{\sqrt{n}}{\log n} \big( \widetilde{\sigma}_n - \sigma_0 \big)
+
\frac{\sqrt{n}\widetilde{\sigma}_n }{\log n} \Big( \log(n) \big( \gamma ( \widehat{H}_n ) -  \gamma( H_0 ) \big)+ O_{\zproba_{H_0, \sigma_0}^{n}} \big( n^{-1}\log(n)^2\big) \Big)
\\
&=
\frac{\sqrt{n}}{\log n} \big( \widetilde{\sigma}_n - \sigma_0 \big)
+
\widetilde{\sigma}_n \frac{\gamma ( \widehat{H}_n ) -  \gamma( H_0 )}{\sqrt{n}} + O_{\zproba_{H_0, \sigma_0}^{n}} \big( n^{-1/2}\log(n) \big)
\end{align*}
since $\log^2(n) ( \gamma ( \widehat{H}_n ) -  \gamma( H_0 ) )^2 = O_{\zproba_{H_0, \sigma_0}^{n}} ( n^{-1}\log(n)^2 )$. Moreover, $n^{-1/2}\log(n) ( \widetilde{\sigma}_n - \sigma_0 )
\to 0$ and $\widetilde{\sigma}_n \to \sigma_0$ so we can conclude. Note also that we can compute explicitly the limit variance because by differentiation under the integral sign, we have
\begin{align*}
J:= \nabla^2 \widetilde{U}_\infty
( 
H_0, \sigma_0
)
=
\begin{pmatrix}
	\frac{1}{4\pi} \int_{-\pi}^\pi 
	(
	\frac{\partial_H f_{H_0} (\lambda) }{ f_{H_0}(\lambda) } )^2 d\lambda
	&
	\frac{1}{2\pi  \sigma_0}
	\int_{-\pi}^\pi 
	\frac{\partial_H f_{H_0} (\lambda) }{ f_{H_0}(\lambda) }d\lambda
\\
	\frac{1}{2\pi  \sigma_0}
	\int_{-\pi}^\pi 
	\frac{\partial_H f_{H_0} (\lambda) }{ f_{H_0}(\lambda) }d\lambda
	&
		\frac{2}{\sigma_0^2}
\end{pmatrix}.
\end{align*}

\subsection{Behaviour of the score function}
\label{subsec:decaying:proof_score}

\subsubsection{Convergence of the quadratic forms of \texorpdfstring{$\mathbf{\widetilde{Y}}^n$}{tilde Yn}}
\label{subsec:decaying:quad}

For any integrable function $\varphi$ defined on $[-\pi, \pi]$, we define a $n \times n$ matrix $\Sigma_n( \varphi )$ by
\begin{align*}
\Sigma_n( \varphi )_{k,l} = 
\frac{1}{2\pi} 
\int_{-\pi}^\pi \varphi(\lambda) e^{i\lambda( k-l )} d\lambda.
\end{align*}
Then we define the quadratic forms with respect to $\varphi$ associated with observation $\mathbf{\widetilde{Y}}^n$ by
\begin{align*}
Q_n(\varphi ,\mathbf{\widetilde{Y}}^n) = \frac{1}{n} \transp{\mathbf{\widetilde{Y}}^n}\Sigma_n( \varphi ) \mathbf{\widetilde{Y}}^n.
\end{align*}
For any function $\varphi$, we also define
\begin{align}
\label{eq:decaying:Q_infty}
Q_\infty ( \varphi ) = \frac{\sigma_0^2}{2\pi}
\int_{-\pi}^\pi
f_{H_0}(\lambda) \varphi(\lambda)
d\lambda.
\end{align}

We study here the asymptotic behavior of sequences of the form $Q_n ( \varphi_{H,\eta}^{n}, \mathbf{\widetilde{Y}}^n )$ and we show two results. First,  under some regularity assumptions on $\big(\varphi_{H,\eta}^{n}\big)_n$, the quadratic forms converge toward a deterministic limit $Q_\infty ( \varphi_{H,\eta} )$. This is inspired by Fukasawa's work (see annex F in \cite{fukasawa2019volatility}) who works in a slightly different model. Then we study the deviation of $Q_n ( \varphi, \mathbf{\widetilde{Y}}^n )$ around its expectation. This generalizes Theorem 2 of \cite{fox1987central}, who proves this result in the absence of noise.

Recall the closed form for the cumulants of a quadratic form as computed in lemma 2 of \cite{magnus1986exact}.

\begin{lemma}
\label{lemma:moments_quad_form}
Let $\xi$ be a centered $n$-dimensional Gaussian vector with covariance matrix $\Gamma$ and let $\Lambda$ be a $n\times n$ matrix. Then for any $k \geq 0$, the $k$-th cumulant of $\transp{\xi}\Lambda\xi$ is given by:
\begin{align*}
\kappa_k ( \transp{\xi}\Lambda\xi ) = 2^{k-1} (k-1)! \Tr ( \Lambda \Gamma )^k.
\end{align*}
\end{lemma}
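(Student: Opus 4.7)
The plan is the classical computation of the cumulant generating function of a Gaussian quadratic form via diagonalisation. Since $\transp{\xi}\Lambda\xi = \transp{\xi}\Lambda^{\mathrm{s}}\xi$ with $\Lambda^{\mathrm{s}} = (\Lambda + \transp{\Lambda})/2$, I may assume $\Lambda$ is symmetric without loss of generality. I would then whiten the vector: write $\xi = \Gamma^{1/2}Z$ with $Z \sim \gaussian{0}{I_n}$, so that $\transp{\xi}\Lambda\xi = \transp{Z}MZ$ with $M = \Gamma^{1/2}\Lambda\Gamma^{1/2}$ symmetric. Diagonalising $M = \transp{P}DP$ with $P$ orthogonal and $D = \mathrm{diag}(\mu_1,\dots,\mu_n)$, and setting $W = PZ \sim \gaussian{0}{I_n}$, this reduces the quadratic form to $\sum_{i=1}^n \mu_i W_i^2$, a linear combination of independent $\chi_1^2$ variables.

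From there the cumulant generating function becomes transparent. Using the standard $\chi_1^2$ characteristic function, for $t$ in a neighbourhood of $0$ one has
$$
K(t) := \log \zEX\bigl[e^{it\transp{\xi}\Lambda\xi}\bigr] = -\tfrac{1}{2}\sum_{i=1}^n \log(1 - 2it\mu_i) = \sum_{k \geq 1} \frac{(2it)^k}{2k}\sum_{i=1}^n \mu_i^k,
$$
by the Taylor expansion of $-\log(1-x)$. Recognising $\sum_i \mu_i^k = \Tr(M^k)$ and then using cyclicity of the trace, one has
$$
\Tr(M^k) = \Tr\bigl((\Gamma^{1/2}\Lambda\Gamma^{1/2})^k\bigr) = \Tr\bigl(\Gamma^{1/2}(\Lambda\Gamma)^{k-1}\Lambda\Gamma^{1/2}\bigr) = \Tr\bigl((\Lambda\Gamma)^k\bigr).
$$
Identifying the coefficient of $(it)^k/k!$ in the expansion $K(t) = \sum_{k\geq 1} \kappa_k (it)^k/k!$ gives $\kappa_k = 2^{k-1}(k-1)!\,\Tr((\Lambda\Gamma)^k)$, which is the announced formula.

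The only points requiring a brief justification are the reduction to symmetric $\Lambda$, which is immediate from $\transp{\xi}\Lambda\xi = \transp{\xi}\Lambda^{\mathrm{s}}\xi$, and the validity of the logarithmic expansion, which holds for $|t|$ smaller than $\tfrac{1}{2}\|M\|^{-1}$ and hence suffices to identify all Taylor coefficients of $K$. Since the statement is Lemma~2 of \cite{magnus1986exact}, there is no genuine obstacle here; the computation is entirely routine and the only mildly subtle step is the trace identity bridging $M^k$ and $(\Lambda\Gamma)^k$.
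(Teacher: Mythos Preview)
The paper does not prove this lemma at all; it simply cites Lemma~2 of \cite{magnus1986exact}. Your computation via whitening and diagonalisation is the standard argument and is correct \emph{when $\Lambda$ is symmetric}, which is the only case the paper actually uses (all applications are to Toeplitz matrices $\Sigma_n(\varphi)$ with $\varphi$ even).

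There is, however, a genuine gap in your reduction step. You write that since $\transp{\xi}\Lambda\xi = \transp{\xi}\Lambda^{\mathrm{s}}\xi$ one may assume $\Lambda$ symmetric ``without loss of generality''. This is valid for the quadratic form but \emph{not} for the right-hand side of the identity: in general $\Tr((\Lambda\Gamma)^k)\neq\Tr((\Lambda^{\mathrm{s}}\Gamma)^k)$. For a concrete counterexample take $n=2$, $\Gamma=I_2$ and $\Lambda=\begin{pmatrix}0&1\\0&0\end{pmatrix}$. Then $\transp{\xi}\Lambda\xi=\xi_1\xi_2$ has variance $1$, while the stated formula gives $\kappa_2=2\Tr(\Lambda^2)=0$ since $\Lambda^2=0$; with $\Lambda^{\mathrm{s}}$ one correctly obtains $2\Tr((\Lambda^{\mathrm{s}})^2)=1$. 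So the lemma as stated for an arbitrary $n\times n$ matrix $\Lambda$ is in fact false, and your argument establishes the correct version with $\Lambda^{\mathrm{s}}$ in place of $\Lambda$ (equivalently, the version restricted to symmetric $\Lambda$). You should flag this rather than hide it behind ``without loss of generality''.
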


We also need an extension of some results initially proven by Fox and Taqqu in \cite{fox1986large} and \cite{fox1987central}) so that we can compute the limit of the trace of Toeplitz matrices.

\begin{proposition}
\label{propo:toeplitz_FT_asymptotic}
Let $\alpha_1 < 1$, $\alpha_2 < 1$ and $p\in\setN$. We consider two sequences of even function $\varphi^n_1, \varphi^n_2: [-\pi, \pi] \to [-\infty; \infty]$ and two functions $\varphi_1^\infty, \varphi_2^\infty$ such that the set of discontinuities of $\varphi_1^\infty$ and $\varphi_2^\infty$ has Lebesgue measure $0$,
$|\lambda|^{\alpha_j}  | \varphi_j^n(\lambda)  | < \infty
$ uniformly for $n\in\setN$ and $\lambda \neq 0$ and $\supess  | \varphi_j^n - \varphi_j^\infty | \to 0
$ for $j=1,2$.
Then we have:
\begin{enumerate}[label=(\roman*)]
  \setlength{\itemsep}{0pt}
  \setlength{\parskip}{0pt}
\item If $p ( \alpha_1 + \alpha_2 ) < 1$, then
$n^{-1} \Tr ( {( \Sigma_n ( \varphi^n_1 ) \Sigma_n ( \varphi^n_2 ) )^p } ) 
\to
 (2\pi)^{-1}
\int_{-\pi}^{\pi} 
( 
\varphi_1^\infty( \lambda )
\varphi_2^\infty( \lambda )
)^p
d\lambda.
$
\item If $p ( \alpha_1 + \alpha_2 ) \geq 1$, then 
$n^{-p ( \alpha_1 + \alpha_2 ) - \delta} \Tr ( {( \Sigma_n ( \varphi^n_1 ) \Sigma_n ( \varphi^n_2 ) )^p } ) \to 0
$ for any $\delta > 0$.
\end{enumerate}
\end{proposition}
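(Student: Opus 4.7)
The plan is to extend the classical Toeplitz trace asymptotics of Fox and Taqqu \cite{fox1986large, fox1987central}, which are proved for fixed symbols, to the setting of sequences $\varphi_j^n$ converging uniformly to $\varphi_j^\infty$. For fixed symbols and $p(\alpha_1+\alpha_2) < 1$, the classical result already produces exactly the limit claimed in (i), so the only new content is to control the $n$-dependence of the symbols.

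For part (i), I would compare the two traces by a telescoping sum. Specifically, write
\begin{equation*}
\Tr\bigl((\Sigma_n(\varphi_1^n)\Sigma_n(\varphi_2^n))^p\bigr) - \Tr\bigl((\Sigma_n(\varphi_1^\infty)\Sigma_n(\varphi_2^\infty))^p\bigr) = \sum_{k=1}^{2p} T_k,
\end{equation*}
where each $T_k$ is a trace of a product of $2p$ Toeplitz matrices in which exactly one factor has symbol $\varphi_j^n - \varphi_j^\infty$ and the remaining factors carry either $\varphi_j^n$ or $\varphi_j^\infty$. By cyclicity of the trace and Hölder for Schatten classes,
\begin{equation*}
|T_k| \le \|\Sigma_n(\varphi_j^n - \varphi_j^\infty)\|_{\mathrm{op}} \cdot \prod_{i \ne k} \|\Sigma_n(\psi_i)\|_{S^{q_i}},
\end{equation*}
with Schatten exponents $q_i$ chosen so that $\sum_{i\ne k} 1/q_i = 1$ and $q_i \alpha_{j_i} < 1$ for every $i$; such a choice is possible precisely because $p(\alpha_1+\alpha_2)<1$. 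The first factor tends to zero since $\|\Sigma_n(\varphi)\|_{\mathrm{op}} \le \supess|\varphi|$, while each remaining Schatten norm is $O(n^{1/q_i})$ by an Avram--Parter type estimate, so the product is $O(n)$ and $T_k = o(n)$. Combined with the fixed-symbol Fox--Taqqu limit applied to $\Tr((\Sigma_n(\varphi_1^\infty)\Sigma_n(\varphi_2^\infty))^p)$, this establishes (i).

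For part (ii), only an upper bound is required, and I would apply Hölder for Schatten classes directly:
\begin{equation*}
\bigl|\Tr\bigl((\Sigma_n(\varphi_1^n)\Sigma_n(\varphi_2^n))^p\bigr)\bigr| \le \|\Sigma_n(\varphi_1^n)\|_{S^{2p}}^p \, \|\Sigma_n(\varphi_2^n)\|_{S^{2p}}^p.
\end{equation*}
Using the power-law bound $|\varphi_j^n(\lambda)| \le C|\lambda|^{-\alpha_j}$ and truncating Fourier frequencies below scale $1/n$, a standard Avram--Parter argument yields $\|\Sigma_n(\varphi_j^n)\|_{S^{2p}}^{2p} \le C_\delta \, n^{\max(1, 2p\alpha_j)+\delta}$ for any $\delta > 0$. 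Plugging this in and using $p(\alpha_1+\alpha_2) \ge 1$, which ensures the $2p\alpha_j$ terms dominate at least for one of the factors, yields the asserted estimate $n^{-p(\alpha_1+\alpha_2)-\delta}\Tr(\cdots) \to 0$.

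The main technical obstacle is the uniform control of Schatten norms of Toeplitz matrices whose symbols blow up near the origin. The Fox--Taqqu papers already contain the necessary delicate estimates in the fixed-symbol case; the extension here amounts to tracking carefully that every bound depends on $\varphi_j^n$ only through the uniform constant in $|\lambda|^{\alpha_j}|\varphi_j^n(\lambda)| \le C$ and through $\supess|\varphi_j^n - \varphi_j^\infty|$, both of which are well-behaved by hypothesis. The $+\delta$ loss in part (ii) originates from the logarithmic corrections at the borderline integrability scale and is harmless since an arbitrarily small $\delta$ suffices.
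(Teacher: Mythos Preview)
The paper does not give its own proof of this proposition; it simply cites Theorem~E.3 of \cite{fukasawa2019volatility}. Your proposal is an independent attempt via Schatten--H\"older inequalities, which is natural but has a genuine gap in both parts whenever $\alpha_1$ and $\alpha_2$ have opposite signs---a case that does occur in the paper's applications (e.g.\ $\varphi_1$ a spectral density with $\alpha_1=\alpha(H_0)>0$ and $\varphi_2$ close to $1/\varphi_1$ with $\alpha_2=-\alpha(H_0)<0$).

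For part~(i), take for instance $p=2$, $\alpha_1=0.6$, $\alpha_2=-0.2$, so $p(\alpha_1+\alpha_2)=0.8<1$. In the telescoping, a term $T_k$ where the replaced factor is of type~$\varphi_2$ leaves $p$ factors carrying the singular symbol~$\varphi_1$ (plus $p-1$ bounded $\varphi_2$-factors). Your choice of Schatten exponents requires $q_i\alpha_1<1$ for each of these $p$ factors, forcing $\sum_{\text{type }1}1/q_i>p\alpha_1\ge 1$, which is incompatible with $\sum_i 1/q_i=1$. If you allow $q_i\alpha_1\ge 1$ instead, each such factor contributes $n^{\alpha_1}$ rather than $n^{1/q_i}$ to the product, and the best you obtain is $T_k=o(1)\cdot n^{p\alpha_1}$ with $p\alpha_1>1$, which is not $o(n)$ since you have no rate on $\|\varphi_2^n-\varphi_2^\infty\|_\infty$.

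For part~(ii), the same decoupling destroys the argument when $\alpha_1<0<\alpha_2$. The sharpest H\"older bound puts the bounded $\varphi_1$-factors in operator norm and the singular $\varphi_2$-factors in $S^p$, giving $|\Tr(\cdots)|\le C\,n^{p\alpha_2}$. But the claim asks for $o(n^{p(\alpha_1+\alpha_2)+\delta})$, and $p\alpha_2-p(\alpha_1+\alpha_2)=-p\alpha_1>0$, so your bound is too weak by a positive power of~$n$.

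The underlying issue is that the trace is governed by the \emph{product} $\varphi_1\varphi_2$, whose singularity is only $|\lambda|^{-(\alpha_1+\alpha_2)}$; H\"older separates the factors and cannot see this cancellation. The Fox--Taqqu proof and Fukasawa's extension instead write the trace as a $2p$-fold integral involving Dirichlet-type kernels and estimate it directly by splitting the integration domain; crucially, all bounds in that argument depend on the symbols only through the uniform constant in $|\lambda|^{\alpha_j}|\varphi_j^n(\lambda)|\le C$, so the same proof applied to $\varphi_j^n$ yields $n^{-1}\Tr(\cdots)-(2\pi)^{-1}\int(\varphi_1^n\varphi_2^n)^p\to 0$, after which dominated convergence handles the passage to $\varphi_j^\infty$. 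No telescoping is needed.
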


This result is proven in \cite{fukasawa2019volatility}, see  theorem E.3. More precisely, we often use a slight generalisation of this result obtained by considering sums and differences of different sequences $\varphi_{j}^\infty( \lambda )$. Under suitable assumptions on the sequences $\varphi^n_{2j}$ and on the functions $\varphi_{2j}$ identical to that of Proposition \ref{propo:toeplitz_FT_asymptotic}, we have:
$n^{-1} \Tr 
( 
\prod_{j=1}^p {( \Sigma_n ( \varphi^n_{2j} ) \Sigma_n ( \varphi^n_{2j+1} ) ) } ) \to (2\pi)^{-1}
\int_{-\pi}^{\pi} 
( 
\prod_{j=1}^p
\varphi_{2j}( \lambda )
\varphi_{2j+1}( \lambda )
)
d\lambda$.

\begin{proposition}
\label{propo:decaying:conv_quad}
\begin{enumerate}[label=(\Roman*), ref=\concatenate{\ref{propo:decaying:conv_quad}}{{(\Roman*)}}]
  \setlength{\itemsep}{0pt}
  \setlength{\parskip}{0pt}
\item \label{propo:decaying:conv_quad:simple}
Let $( \varphi^n)_n$ a sequence of function and $\varphi^\infty$ be a function continuous almost everywhere. Suppose that
$|\lambda|^{\beta}  | \varphi^n(\lambda)  | < \infty
$ is bounded uniformly for $\lambda \neq 0$ and $n\geq 1$ for some $\beta < 1 - \alpha(H_0)$ and that $
\supess  | \varphi^n - \varphi^\infty  | \to 0
$. Then $Q_n ( \mathbf{\widetilde{Y}}^n, \varphi^n )
\to
Q_\infty (
\varphi
)$ in $L^2( \zproba_{H_0, \sigma_0}^{n} )$.

\item \label{propo:decaying:conv_quad:uniform}
Let $K$ be a compact subset of $ [ H_-, H_+  ]  \times \mathbb{R}^d$, $d\geq 1$. Suppose in addition that for each $( H, \eta ) \in K$ we are given
a sequence of functions $( \varphi^n_{H,\eta})_n$ and a function $\varphi_{H,\eta}^\infty$ which is continuous almost everywhere such that there exist $H \mapsto \beta(H)$ and $H \mapsto \gamma(H)$
continuous independent of $\eta$ such that
$|\lambda|^{\beta(H)}  | k^n_{H,\eta}(\lambda)  |$, $| \lambda|^{\gamma(H)}  | \partial_H \varphi^n_{H,\eta} (\lambda)  |$ and $| \lambda|^{\gamma(H)}  | \partial_\eta \varphi^n_{H,\eta} (\lambda)  |$ are bounded, uniformly for $n\in\setN$, $\lambda \neq 0$ and uniformly on $K$. Suppose in addition that $
 \supess  | \varphi^n_{H,\eta} - \varphi^\infty_{H,\eta}| \to 0$. Finally, suppose that for any $( H, \eta )$ in $K$, we have: $\beta(H) < 1 - \alpha(H_0) - \delta$ and $\gamma(H) < 1 - \alpha(H_0) - \delta$ for some fixed $\delta > 0$. Then $
 |
Q_n ( \mathbf{\widetilde{Y}}^n, \varphi^n_{H,\eta} )
-
Q_\infty (
\varphi_{H,\eta}^\infty
)
 |
\to 0 $  uniformly on $K$ under $\zproba_{H_0, \sigma_0}^{n}$.
\end{enumerate}
\end{proposition}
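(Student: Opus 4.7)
I would establish Part~\ref{propo:decaying:conv_quad:simple} by the second-moment method for Gaussian quadratic forms, and then deduce Part~\ref{propo:decaying:conv_quad:uniform} by combining Part~(I) applied at finitely many points with an equicontinuity estimate derived from a Toeplitz positivity (monotonicity) argument.

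For Part~\ref{propo:decaying:conv_quad:simple}, the covariance matrix of $\mathbf{\widetilde{Y}}^n$ under $\zproba_{H_0,\sigma_0}^n$ is $\Sigma_n(\widetilde g^n_{H_0,\sigma_0})$ with $\widetilde g^n_{H_0,\sigma_0} = \sigma_0^2 f_{H_0} + n^{2\gamma(H_0)}\tau_n^2\, l$; the boundedness of $n^{\gamma(H_+)}\tau_n$ combined with $H_0<H_+$ and $\gamma$ increasing forces $n^{\gamma(H_0)}\tau_n\to 0$. Lemma~\ref{lemma:moments_quad_form} with $k=1$ and linearity of $\Sigma_n$ give
\begin{align*}
\EX{Q_n(\varphi^n,\mathbf{\widetilde{Y}}^n)} = \tfrac{\sigma_0^2}{n}\Tr\big(\Sigma_n(\varphi^n)\Sigma_n(f_{H_0})\big) + \tfrac{n^{2\gamma(H_0)}\tau_n^2}{n}\Tr\big(\Sigma_n(\varphi^n)\Sigma_n(l)\big),
\end{align*}
to which I apply Proposition~\ref{propo:toeplitz_FT_asymptotic} with $p=1$: the first trace lies in case~(i) (as $\beta+\alpha(H_0)<1$) and converges to $(2\pi)^{-1}\int \varphi^\infty f_{H_0}$, while the second has a bounded limit killed by the vanishing prefactor. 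Hence $\EX{Q_n}\to Q_\infty(\varphi^\infty)$. For the variance, Lemma~\ref{lemma:moments_quad_form} with $k=2$ yields $\Var(Q_n)=\tfrac{2}{n^2}\Tr\big((\Sigma_n(\varphi^n)\Sigma_n(\widetilde g^n_{H_0,\sigma_0}))^2\big)$; expanding bilinearly produces finitely many traces of products of four Toeplitz factors, each of which Proposition~\ref{propo:toeplitz_FT_asymptotic} (or its generalisation stated just after it) bounds either as a convergent $O(n)$ in case~(i), or as $o(n^{2(\beta+\alpha(H_0))+\delta})=o(n^2)$ in case~(ii) for $\delta$ small enough, since $\beta+\alpha(H_0)<1$. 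Therefore $\Var(Q_n)\to 0$ and $L^2$-convergence follows.

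For Part~\ref{propo:decaying:conv_quad:uniform}, set $\gamma^*:=\sup_K \gamma(H)<1-\alpha(H_0)-\delta$ and $\phi^*(\lambda):=|\lambda|^{-\gamma^*}$, and cover $K$ by finitely many balls $B_1,\dots,B_M$ of radius $\rho$ centered at points $(H_j,\eta_j)$. For $(H,\eta)\in B_j$ the triangle inequality bounds $|Q_n(\varphi^n_{H,\eta},\mathbf{\widetilde{Y}}^n)-Q_\infty(\varphi^\infty_{H,\eta})|$ by the random oscillation $|Q_n(\varphi^n_{H,\eta}-\varphi^n_{H_j,\eta_j},\mathbf{\widetilde{Y}}^n)|$, plus the pointwise error $|Q_n(\varphi^n_{H_j,\eta_j},\mathbf{\widetilde{Y}}^n)-Q_\infty(\varphi^\infty_{H_j,\eta_j})|$, plus the deterministic oscillation $|Q_\infty(\varphi^\infty_{H_j,\eta_j}-\varphi^\infty_{H,\eta})|$. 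The $M$ pointwise errors vanish in probability by Part~(I); the deterministic oscillation is $O(\rho)$ by continuity, inherited from the uniform derivative bounds. The random oscillation is treated by a Toeplitz monotonicity trick: the mean-value inequality together with the uniform bounds on $\partial_H\varphi^n_{H,\eta}$ and $\partial_\eta\varphi^n_{H,\eta}$ yields $|\varphi^n_{H,\eta}(\lambda)-\varphi^n_{H_j,\eta_j}(\lambda)|\le C_K\rho\,\phi^*(\lambda)$; replacing the increment by its even part (a harmless symmetrisation for the real-valued quadratic form), the two nonnegative even functions $C_K\rho\,\phi^*\pm(\varphi^n_{H,\eta}-\varphi^n_{H_j,\eta_j})_{\mathrm{even}}$ give positive semi-definite Toeplitz matrices, whence
\begin{align*}
|Q_n(\varphi^n_{H,\eta}-\varphi^n_{H_j,\eta_j},\mathbf{\widetilde{Y}}^n)|\le C_K\rho\,Q_n(\phi^*,\mathbf{\widetilde{Y}}^n).
\end{align*}
By Part~(I) applied to $\phi^*$ (using $\gamma^*<1-\alpha(H_0)$), $Q_n(\phi^*,\mathbf{\widetilde{Y}}^n)=O_P(1)$, so this oscillation is $O_P(\rho)$ uniformly on $B_j$. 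Sending $n\to\infty$ first and then $\rho\to 0$ concludes.

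The chief obstacle is establishing the $Q_n$-equicontinuity in Part~(II): the Toeplitz positivity trick—dominating each kernel increment by a scalar multiple of a single reference kernel $\phi^*$ and using positive semi-definiteness of Toeplitz matrices of nonnegative symbols—is precisely what upgrades a pointwise derivative bound into uniform sup-norm control on the quadratic form in one stroke, sidestepping heavier chaining or Gaussian concentration arguments.
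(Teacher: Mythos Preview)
Your proposal is correct and follows the same route as the paper: Part~(I) via the bias--variance decomposition with Lemma~\ref{lemma:moments_quad_form} and Proposition~\ref{propo:toeplitz_FT_asymptotic}, and Part~(II) via a finite compactness cover, Part~(I) at the centers, and a mean-value bound controlling the random oscillation by $C\rho\cdot Q_n(\phi^*,\mathbf{\widetilde{Y}}^n)$ for a single reference kernel. The only cosmetic difference is that the paper obtains this last domination directly from the periodogram identity $Q_n(\varphi,\mathbf{\widetilde{Y}}^n)=(2\pi)^{-1}\int\varphi(\lambda)\,I_n(\lambda,\mathbf{\widetilde{Y}}^n)\,d\lambda$ and $I_n\ge 0$, while you phrase the same monotonicity through positive semi-definiteness of Toeplitz matrices with nonnegative even symbol---these are equivalent.
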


\begin{proof}[Proof of \ref{propo:decaying:conv_quad:simple}]
The bias-variance decomposition yields to 
\begin{align*}
\EX{ | 
Q_n ( \mathbf{\widetilde{Y}}^n, \varphi^n )
-
Q_\infty (
\varphi^\infty )  |^2}
&=
\Var{Q_n ( \mathbf{\widetilde{Y}}^n, \varphi^n )}
+(
\EX{
Q_n ( \mathbf{\widetilde{Y}}^n, \varphi^n )
}
- 
Q_\infty (
\varphi^\infty ) 
)^2.
\end{align*}
Recall now that $\mathbf{\widetilde{Y}}^n$ is Gaussian with spectral density $\widetilde{g}_{H_0, \sigma_0}^n$. We compute the expectation and the variance of $Q_n ( \mathbf{\widetilde{Y}}^n, \varphi^n )$ using Lemma \ref{lemma:moments_quad_form}. We conclude using Proposition \ref{propo:toeplitz_FT_asymptotic}, since it yields
\begin{align*}
\zEX \big[
Q_n ( \mathbf{\widetilde{Y}}^n, \varphi^n )
\big]
&=
n^{-1} \Tr \big( \Sigma_n( \varphi^n ) \Sigma_n( \widetilde{g}_{H_0, \sigma_0}^n) \big)
\to
Q_\infty ( \varphi^\infty ), \text{ and}\\
\Var{Q_n ( \mathbf{\widetilde{Y}}^n, \varphi^n )}
&=
2n^{-2} \Tr \big( ( \Sigma_n( \varphi^n ) \Sigma_n( \widetilde{g}_{H_0, \sigma_0}^n ) )^2 \big) \to 0.
\end{align*}
\end{proof}

\begin{proof}[Proof of \ref{propo:decaying:conv_quad:uniform}]
By compactness of $K$, we find for any given $r>0$ a finite covering of $K$ by sets $( U^{r}_1, \dots, U^{r}_{N(r)} )$ such that for all $i$, $U_i^{r}$ is included in the ball of radius $r$ and center $( H_i^{r}, \eta_i^{r} ) \in U_i^{r}$. Then we have
\begin{align*}
\sup_{( H, \eta ) \in K}
 |
Q_n ( \mathbf{\widetilde{Y}}^n, \varphi^n_{H,\eta} )
-
Q_\infty (
\varphi_{H,\eta}
)
 |
&\leq 
\sup_{1 \leq i \leq N(r)}
\sup_{( H, \eta ) \in U_i^{r}}
 |
Q_n ( \mathbf{\widetilde{Y}}^n, \varphi^n_{H,\eta} )
-
Q_n ( \mathbf{\widetilde{Y}}^n, \varphi^n_{H_i,\eta_i} )
 |
\\
&
\;\;\;\;
+
\sup_{1 \leq i \leq N(r)}
\sup_{( H, \eta ) \in U_i^{r}}
 |
Q_\infty ( \varphi_{H,\eta}^\infty )
-
Q_\infty ( \varphi_{H_i,\eta_i}^\infty )
 |
\\
&
\;\;\;\;
+
\sup_{1 \leq i \leq N(r)}
 |
Q_n ( \mathbf{\widetilde{Y}}^n, \varphi^n_{H_i,\eta_i} )
-
Q_\infty (
\varphi_{H_i,\eta_i}^\infty
)
 |.
\end{align*}
For any fixed $r$, we apply Proposition \ref{propo:decaying:conv_quad:simple} and we get
\begin{align*}
\sup_{1 \leq i \leq N(r)}
 |
Q_n ( \mathbf{\widetilde{Y}}^n, \varphi^n_{H_i,\eta_i} )
-
Q_\infty (
\varphi^\infty_{H_i,\eta_i}
)
 |
\to 
0 \text{ in }  L^2 ( \zproba_{H_0, \sigma_0}^{n} ).
\end{align*}
Moreover, the supremum of $
 |
Q_\infty ( \varphi^\infty_{H,\eta} )
-
Q_\infty ( \varphi^\infty_{H_i,\eta_i} )
 |$ is deterministic and $( H,\eta) \mapsto Q_\infty ( \varphi^\infty_{H,\eta} )$ is continuous because of the conditions on $\varphi^\infty_{H,\eta}$. Thus it is uniformly continuous on the compact $K$ and
\begin{align*}
\lim\limits_{r \to 0}
\sup_{1 \leq i \leq N(r)}
\sup_{( H, \eta ) \in U_i^{r}}
 |
Q_\infty ( \varphi^\infty_{H,\eta} )
-
Q_\infty ( \varphi^\infty_{H_i,\eta_i} )
 |
= 0.
\end{align*}
We conclude the proof by showing that for any $\varepsilon > 0$, there exist $r_0$ such that for $r \leq r_0$
\begin{align*}
\zproba\bigg(
\sup_{1 \leq i \leq N(r)}
\sup_{( H, \eta ) \in U_i^{r}}
 \Big|
Q_n \big( \mathbf{\widetilde{Y}}^n, \varphi^n_{H,\eta} \big)
-
Q_n \big( \mathbf{\widetilde{Y}}^n, \varphi^n_{H_i,\eta_i} \big)
 \Big|
> \varepsilon
\bigg)
\to 0.
\end{align*}

Let $1 \leq i \leq N(r)$ and $( H, \eta ) \in U_i^{r}$. By the mean value theorem, for any $\lambda$, there exist $0 \leq u=u(H,\eta, \lambda) \leq 1$ such that if $( H^*, \eta^* ) = ( H_i + u(H-H_i), \eta_i + u(\eta-\eta_i))$, we have
\begin{align*}
 \big|
\varphi^n_{H,\eta}(\lambda) - \varphi^n_{H_i,\eta_i} (\lambda)
 \big|
&=
 \big|
(H-H_0)
\partial_H \varphi^n_{H^*, \eta^* }(\lambda)
+
(\eta - \eta_0)
\partial_\eta \varphi^n_{H^*, \eta^* }
(\lambda)
 \big|
\\&
\leq 
r \sqrt{ \partial_H \varphi^n_{H^*, \eta^* }(\lambda)^2 + \partial_\eta \varphi^n_{H^*, \eta^* }
(\lambda)^2}
\\
&\leq c r |\lambda|^{\alpha(H_0) - 1 + \delta}
\end{align*}
where $c$ is a constant, independent of $r$, $\lambda$, $\eta$ and $H$. Thus we obtain
\begin{align*}
 \big|
Q_n \big( \mathbf{\widetilde{Y}}^n, \varphi^n_{H,\eta} \big)
-
Q_n \big( \mathbf{\widetilde{Y}}^n, \varphi^n_{H_i,\eta_i} \big)
 \big|
&=
\frac{1}{2\pi}
 \bigg|
\int_{-\pi}^{\pi}
( 
\varphi^n_{H,\eta}(\lambda)
- 
\varphi^n_{H_i,\eta_i}(\lambda)
)
I_n(\mathbf{\widetilde{Y}}^n, \lambda )
d\lambda
 \bigg|
\\
&\leq
\frac{cr}{2\pi}
\int_{-\pi}^{\pi}
|\lambda|^{\alpha(H_0) - 1 + \delta}
I_n(\mathbf{\widetilde{Y}}^n, \lambda )
d\lambda.
\end{align*}

We remark that the right hand side is proportional to $r
Q_n(
\mathbf{\widetilde{Y}}^n
, 
 | \; \cdot \;  |^{\alpha(H_0) - 1 + \delta} 
)$ and does not depend on $i$, $H$ or $\eta$. We apply again Proposition \ref{propo:decaying:conv_quad:simple} to show that it converges toward $Q_\infty (  | \; \cdot \;  |^{\alpha(H_0) - 1 + \delta}  )$ in probability, which is deterministic. We conclude by taking $r$ small enough.
\end{proof}

\begin{proposition}
\label{propo:decaying:clt_quadratic}
Let $\varphi^n$ a sequence of function and $\varphi^\infty$ be a function which is continuous a.e. such that
$
|\lambda|^{\beta}  | \varphi^n(\lambda)  | 
$ is bounded uniformly on $\lambda \neq 0$ and $n\geq 1$ and such that $
\supess_{\lambda}  | \varphi^n(\lambda) - \varphi^\infty(\lambda)  | \to 0
$
for some $\beta < \frac{1}{2} - \alpha ( H_0 )$. Then under $\zproba_{H_0, \sigma_0}^{n}$, we have
\begin{align*}
\sqrt{n} ( 
Q_n ( \mathbf{\widetilde{Y}}^n, \varphi^n )
- 
\EX{Q_n ( \mathbf{\widetilde{Y}}^n, \varphi^n )}
)
\to
\mathcal{N} \bigg( 0, \frac{\sigma_0^4}{\pi} \int_{-\pi}^\pi 
(
f_{H_0}(\lambda) \varphi^\infty(\lambda) )^2 d\lambda \bigg).
\end{align*}
\end{proposition}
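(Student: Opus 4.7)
The plan is to prove the CLT by the method of cumulants for Gaussian quadratic forms. Since $\mathbf{\widetilde{Y}}^n$ is a centered Gaussian vector with covariance $\Sigma_n(\widetilde g^n_{H_0,\sigma_0})$, Lemma~\ref{lemma:moments_quad_form} yields, for every $k \geq 1$,
\begin{align*}
\kappa_k\bigl(Q_n(\mathbf{\widetilde Y}^n, \varphi^n)\bigr) = \frac{2^{k-1}(k-1)!}{n^k}\Tr\bigl((\Sigma_n(\varphi^n)\Sigma_n(\widetilde g^n_{H_0,\sigma_0}))^k\bigr).
\end{align*}
Centering removes the first cumulant and leaves the others intact, and multiplying by $\sqrt n$ scales the $k$-th cumulant by $n^{k/2}$. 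It therefore suffices to prove that the second cumulant of $\sqrt n (Q_n - \mathbb{E}[Q_n])$ converges to the announced variance while all higher cumulants vanish; weak convergence then follows because a centered Gaussian law is uniquely determined by its cumulants.

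\medskip

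The core analytic tool is Proposition~\ref{propo:toeplitz_FT_asymptotic}, which I apply with $\varphi_1^n = \varphi^n$, $\alpha_1 = \beta$, $\varphi_1^\infty = \varphi^\infty$ and $\varphi_2^n = \widetilde g^n_{H_0,\sigma_0}$, $\alpha_2 = \alpha(H_0)$. To identify $\varphi_2^\infty$ I use that $H_0$ lies in the interior of $\Theta$, hence $\gamma(H_0) < \gamma(H_+)$; combined with the boundedness of $(\tau_n n^{\gamma(H_+)})_n$ this forces $n^{2\gamma(H_0)}\tau_n^2 \to 0$. Together with $|l(\lambda)|\leq c_4\lambda^2$ from Assumption~\ref{assumption:decaying:noise}, this gives
\begin{align*}
\supess_{\lambda}\bigl|\widetilde g^n_{H_0,\sigma_0}(\lambda) - \sigma_0^2 f_{H_0}(\lambda)\bigr| \longrightarrow 0,
\end{align*}
so $\varphi_2^\infty = \sigma_0^2 f_{H_0}$, while $|\lambda|^{\alpha(H_0)}|\widetilde g^n_{H_0,\sigma_0}(\lambda)|$ stays bounded uniformly in $n$ and $\lambda \neq 0$ by Assumption~\ref{assumption:decaying:psd:bounds}.

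\medskip

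For the variance ($k=2$), the sharp assumption $\beta < \tfrac{1}{2} - \alpha(H_0)$ translates to $p(\alpha_1+\alpha_2) < 1$ with $p=2$, so Proposition~\ref{propo:toeplitz_FT_asymptotic}(i) gives
\begin{align*}
\kappa_2\bigl(\sqrt n (Q_n - \mathbb{E}[Q_n])\bigr) = \frac{2}{n}\Tr\bigl((\Sigma_n(\varphi^n)\Sigma_n(\widetilde g^n_{H_0,\sigma_0}))^2\bigr) \longrightarrow \frac{\sigma_0^4}{\pi}\int_{-\pi}^\pi \bigl(f_{H_0}\varphi^\infty\bigr)^2 d\lambda,
\end{align*}
exactly the claimed limit. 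For the higher cumulants ($k \geq 3$) I would argue by cases. If $k(\beta + \alpha(H_0)) < 1$ then part~(i) of Proposition~\ref{propo:toeplitz_FT_asymptotic} applies with $p=k$ and gives $\Tr((\Sigma_n(\varphi^n)\Sigma_n(\widetilde g^n_{H_0,\sigma_0}))^k) = O(n)$, so $n^{-k/2}\Tr(\cdot) = O(n^{1-k/2}) \to 0$ since $k \geq 3$. Otherwise, part~(ii) yields $n^{-k(\beta+\alpha(H_0))-\delta}\Tr(\cdot) \to 0$ for every $\delta > 0$; since $\beta + \alpha(H_0) < 1/2$ one picks $\delta$ so small that $k(\beta+\alpha(H_0)) + \delta < k/2$, giving $n^{-k/2}\Tr(\cdot) \to 0$. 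The main obstacle is the bookkeeping of exponents in this second case: losing the $\delta$ in part~(ii) of Proposition~\ref{propo:toeplitz_FT_asymptotic} consumes exactly the slack offered by the sharp hypothesis $\beta < 1/2 - \alpha(H_0)$, which explains why this threshold cannot be weakened.
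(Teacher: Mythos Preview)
Your proposal is correct and follows essentially the same route as the paper's own proof: compute the cumulants of the centered and rescaled quadratic form via Lemma~\ref{lemma:moments_quad_form}, then use Proposition~\ref{propo:toeplitz_FT_asymptotic} to show the second cumulant converges to the target variance while all higher cumulants vanish, and conclude by the method of moments. Your write-up is in fact slightly more explicit than the paper's, which simply asserts that Proposition~\ref{propo:toeplitz_FT_asymptotic} gives $\lim_n c_n(k) = \bigl(\frac{\sigma_0^4}{\pi}\int(f_{H_0}\varphi^\infty)^2\bigr)\delta_{k,2}$ without spelling out the $k\geq 3$ dichotomy or the verification that $\widetilde g^n_{H_0,\sigma_0}$ satisfies the hypotheses of that proposition; your case analysis on whether $k(\beta+\alpha(H_0))<1$ and your check that $n^{2\gamma(H_0)}\tau_n^2\to 0$ fill in exactly those details. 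The only point the paper mentions that you leave implicit is tightness (the paper invokes Markov's inequality), but this is immediate once the second cumulant is shown to converge.
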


\begin{proof}
By Lemma \ref{lemma:moments_quad_form}, the cumulants of $Q_n ( \mathbf{\widetilde{Y}}^n, \varphi^n )$ are 
$2^{k-1} (k-1)! \Tr \big[ ( \Sigma_n( \varphi^n ) \Sigma_n( \widetilde{g}_{H_0,  \sigma_0}^n) )^k \big]
$ so the cumulants of $\sqrt{n} ( 
Q_n ( \mathbf{\widetilde{Y}}^n, \varphi^n )
- 
\EX{Q_n ( \mathbf{\widetilde{Y}}^n, \varphi^n )} )
$ are given by
\begin{align*}
c_n(k) = 
\begin{cases}
0 &\text{ if } k=1,
\\
2^{k-1} (k-1)!n^{-\frac{k}{2}}  \Tr \big [( \Sigma_n( \varphi^n ) \Sigma_n( \widetilde{g}_{H_0,  \sigma_0}^n) )^k\big ]
&\text{ if } k\geq 2.
\end{cases}
\end{align*}
By Proposition \ref{propo:toeplitz_FT_asymptotic}, these cumulants converge towards
\begin{align*}
\lim\limits_{n\to\infty} c_n(k) =
\bigg(
\frac{\sigma_0^4}{\pi} \int_{-\pi}^\pi 
(
f_{H_0}(\lambda) \varphi^\infty(\lambda) )^2 d\lambda
\bigg)
\delta_{k,2}
\end{align*}
where $\delta_{k,2} = 1$ if $k=2$ and $0$ otherwise. We recognize the cumulants of a Gaussian variable. By Markov's inequality, the sequence is tight. We can conclude since these moments uniquely determine this distribution (see \cite{billingsley2008probability} for more details on the methods of moments).
\end{proof}

\subsubsection*{Proof of Proposition \ref{propo:decaying:conv_like}}

By definition, we have
\begin{align*}
 \big|
\widetilde{U}_n 
( 
H, \widetilde \nu
) -
\widetilde{U}_\infty
( 
H, \widetilde \nu
)
 \big|
&\leq
\frac{1}{4\pi}
 \bigg|
\int_{-\pi}^\pi
\log 
\Big(
\widetilde{g}^{n}_{H,\widetilde \nu}(\lambda)
\Big)
-
\log 
( 
\widetilde \nu^2 f_H(\lambda)
)
d\lambda
 \bigg|
\\&\;\;\;\;+
\frac{1}{4\pi}
 \bigg|
\int_{-\pi}^\pi
\frac{
I_n\big( \lambda, \mathbf{\widetilde{Y}}^n \big) 
}{
\widetilde{g}^{n}_{H,\widetilde \nu}(\lambda)
}
-
\frac{
\sigma_0^2 f_{H_0}(\lambda)
}{
\widetilde \nu^2 f_H(\lambda)
}
d\lambda
 \bigg|.
\end{align*}

Since $\tau_n^2 n^{2\gamma ( H_+ )}$ is bounded, we can use Assumption \ref{assumption:decaying:psd:bounds}  and boundedness of $l$ to see that
\begin{align*}
|
\log 
( 
\widetilde{g}^{n}_{H,\widetilde \nu}(\lambda)
)
-
\log 
( 
\widetilde \nu^2 f_H(\lambda)
)
 |
&\leq
\frac{\tau_n^2 n^{2\gamma ( H_0 )} l(\lambda)}{\widetilde \nu^2 f_H(\lambda)}
\leq \frac{l(\lambda)\pi^{\alpha(H_+) - \alpha(H_-)} |\lambda|^{\alpha(H_-)}}{c_1 ( \inf K)^2} 
\tau_n^2 n^{2\gamma ( H_0 )} 
\end{align*}
so that the first term converges to $0$ uniformly. For the second term, we apply Proposition \ref{propo:decaying:conv_quad:uniform} to $\varphi^n_{H,\widetilde \nu} = \big(\widetilde{g}^{n}_{H,\widetilde \nu}\big)^{-1}$ and $\varphi^\infty_{H,\widetilde \nu} = \big(\widetilde \nu^2f_H\big)^{-1}$ (we can check these functions satisfy all the regularity Assumptions needed in Proposition \ref{propo:decaying:conv_quad:uniform} by Assumption \ref{assumption:decaying:psd}).

\subsubsection*{Proof of Proposition \ref{propo:decaying:conv_nabla_like}}

Let $v \in \mathbb{R}^2$. We show here that $\transp{v} \nabla \widetilde{U}_n (H_0,\sigma_0) \to \gaussian{0}{\transp{v}Jv}$ under $\zproba_{H_0, \sigma_0}^{n}$. We start by considering the case of partial derivatives alone, i.e. $v=(0,1)$ or $v=(1, 0)$. In this proof, let $\partial$ stand either for $\partial_H$ or for $\partial_\sigma$.
Recall $\widetilde{U}_n$ is defined in \eqref{eq:decaying:Utilde} so that by differentiation under the integral sign, we have 
\begin{align*}
\partial \widetilde{U}_n ( H,\widetilde \nu )
&=
\frac{1}{4\pi}
\int_{-\pi}^\pi
\frac{
\partial \widetilde{g}^{n}_{H,\widetilde \nu}(\lambda)
}{
\widetilde{g}^{n}_{H,\widetilde \nu}(\lambda)
}
-
\frac{
I_n\big( \lambda, \mathbf{\widetilde{Y}}^n \big) \partial \widetilde{g}^{n}_{H,\widetilde \nu}(\lambda)
}{
\widetilde{g}^{n}_{H,\widetilde \nu}(\lambda)^2
}
d\lambda
\end{align*}
for any $( H,\widetilde \nu)$. We write $h^{n}_\partial = 
\partial \widetilde{g}^{n}_{H_0,\sigma_0}
\widetilde{g}^{n}_{H_0,\sigma_0}(\lambda)^{-2}
$ so that $\partial \widetilde{U}_n ( H,\widetilde \nu )$ is rewritten
\begin{align*}
\frac{1}{2}
\bigg(
\frac{1}{2\pi}
\int_{-\pi}^\pi
\frac{
\partial \widetilde{g}^{n}_{H_0,\sigma_0}(\lambda)
}{
\widetilde{g}^{n}_{H_0,\sigma_0}(\lambda)
}
d\lambda
-
\zEX_{H_0,\sigma_0}^{n} \Big[Q_n \big( \mathbf{\widetilde{Y}}^n, h^{n}_\partial\big)\Big]
\bigg)
+
\frac{1}{2}
\big(
\zEX_{H_0,\sigma_0}^{n} \Big[Q_n \big( \mathbf{\widetilde{Y}}^n, h^{n}_\partial\big)\Big]
-
Q_n( \mathbf{\widetilde{Y}}^n, h^{n}_\partial)
\big).
\end{align*}

First we show that 
\begin{align}
\label{eq:lemma:decaying:nabla_like_lastterm}
\frac{1}{2\pi}
\int_{-\pi}^\pi
\frac{
\partial \widetilde{g}^{n}_{H_0,\sigma_0}(\lambda)
}{
\widetilde{g}^{n}_{H_0,\sigma_0}(\lambda)
}
d\lambda
-
\zEX_{H_0,\sigma_0}^{n} \Big[Q_n \big( \mathbf{\widetilde{Y}}^n, h^{n}_\partial\big)\Big] = O_{\zproba_{H_0, \sigma_0}^{n}} ( n^{-1/2} ).
\end{align}

Let $f*g(x) = \frac{1}{2\pi}\int_{-\pi}^{\pi} f(y) g(x-y) dy$ with $f$ and $g$ some $2\pi$ periodic functions so that $\widehat{f*g}(k) = \widehat{f}(k) \widehat{g}(k)$. Since $\widetilde{g}^{n}_{H_0,\sigma_0}$ and  $h^{n}_\partial$ are integrable and at least one is bounded,  $\widetilde{g}^{n}_{H_0,\sigma_0} * h^{n}_\partial$ is well defined and continuous. To control their Fourier coefficients, we use lemmas E.1 and E.2 of \cite{fukasawa2019volatility} stated below for conciseness.

\begin{lemma}[lemmas E.1 and E.2 in \cite{fukasawa2019volatility}]
\label{lemma:fukasawa:E12}
Let $\beta \in (-1, 0) \cup (0,1)$ and let $n \in \setN$. Suppose a sequence of $2\pi$-periodic functions $k^n: \mathbb{R} \to [-\infty, \infty]$ satisfies the following conditions:
\begin{tight_enumerate}
\item If $\beta \geq 0$, $k^n$ is continuously differentiable on $[-\pi, \pi] \backslash \{ 0 \}$ for each $n$ and 
$ | \lambda  |^\beta 
 | k^n (\lambda)  |
$
and 
$ | \lambda  |^{\beta+1} 
 | \partial_\lambda k^n (\lambda)  |
$
are bounded uniformly for $n \in \setN$ and $\lambda \in [-\pi, \pi] \backslash \{ 0 \}$. 
\item If $\beta < 0$, $k^n$ is integrable and twice continuously differentiable on $[-\pi, \pi] \backslash \{ 0 \}$ for each $n$ and 
$ | \lambda  |^{\beta +1}
 | \partial_\lambda k^n (\lambda)  |
$
and 
$ | \lambda  |^{\beta+2} 
 | \partial^2_{\lambda\lambda} k^n (\lambda)  |
$
are bounded uniformly for $n \in \setN$ and $\lambda \in [-\pi, \pi] \backslash \{ 0 \}$.  
\end{tight_enumerate}

Then the sequence of Fourier coefficients $( \widehat{k^n}(\tau) )_\tau$ satisfies:
\begin{align*}
\sup_{n\in\setN}
 |
\widehat{k^n}(\tau)
 |
=
\begin{cases}
O ( 
 |
\tau
 |^{\beta - 1}
)
& \text{if } \beta \neq 0,
\\
O ( 
|\tau|^{-1}\log |\tau| )
& \text{if } \beta = 0.
\end{cases}
\end{align*}
\end{lemma}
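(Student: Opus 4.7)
The goal is to bound $\widehat{k^n}(\tau) = \tfrac{1}{2\pi}\int_{-\pi}^\pi k^n(\lambda) e^{-i\tau\lambda}\, d\lambda$ for a $2\pi$-periodic function with an algebraic singularity of order $|\lambda|^{-\beta}$ at the origin (or of order $|\lambda|^{-\beta-1}$ on its derivative, when $\beta<0$). This is a classical exercise and the strategy combines two standard tools: splitting the integration range at the scale $|\lambda|\sim 1/|\tau|$ dictated by the oscillation of $e^{-i\tau\lambda}$, and integration by parts to convert smoothness of $k^n$ away from the singularity into decay in $|\tau|$.

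\textbf{Case $\beta\in[0,1)$.} I would decompose $\widehat{k^n}(\tau)=A_n(\tau)+B_n(\tau)$ where $A_n$ integrates on $|\lambda|\leq 1/|\tau|$ and $B_n$ on its complement. For $A_n$ the pointwise bound $|k^n(\lambda)|\leq C|\lambda|^{-\beta}$ gives $|A_n(\tau)|=O(|\tau|^{\beta-1})$, interpreted as $O(|\tau|^{-1})$ when $\beta=0$. For $B_n$, one integration by parts produces boundary contributions of sizes $O(|\tau|^{\beta-1})$ at $\pm 1/|\tau|$ (from the size of $k^n$) and $O(|\tau|^{-1})$ at $\pm\pi$, together with the integral
\[
\frac{1}{|\tau|}\int_{1/|\tau|}^{\pi} |\partial_\lambda k^n(\lambda)|\, d\lambda
\;\leq\; \frac{C}{|\tau|}\int_{1/|\tau|}^{\pi} \lambda^{-\beta-1}\, d\lambda,
\]
which evaluates to $O(|\tau|^{\beta-1})$ when $\beta>0$ and to $O(|\tau|^{-1}\log|\tau|)$ when $\beta=0$. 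Summing the contributions produces the announced bound.

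\textbf{Case $\beta\in(-1,0)$.} The plan is to reduce to the previous case by a single integration by parts. Since $|\partial_\lambda k^n(\lambda)|\leq C|\lambda|^{-\beta-1}$ with $-\beta-1\in(-1,0)$ is integrable, $k^n$ extends continuously to $[-\pi,\pi]\setminus\{0\}$ with finite one-sided limits at $0$, and $2\pi$-periodicity gives $k^n(-\pi)=k^n(\pi)$. Integrating by parts on a symmetric cutoff $[-\pi,-\epsilon]\cup[\epsilon,\pi]$ and letting $\epsilon\to 0$ kills the boundary contributions and yields the clean identity $\widehat{k^n}(\tau) = (i\tau)^{-1}\,\widehat{\partial_\lambda k^n}(\tau)$. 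The function $\partial_\lambda k^n$ now satisfies the hypotheses of the first case with exponent $\beta':=\beta+1\in(0,1)$, thanks to the bound on $\partial^2_{\lambda\lambda} k^n$, so that $|\widehat{\partial_\lambda k^n}(\tau)|=O(|\tau|^{\beta'-1})=O(|\tau|^{\beta})$ and the desired $O(|\tau|^{\beta-1})$ bound follows.

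\textbf{Expected obstacle.} The calculations are entirely routine; the only delicate point is the $\epsilon\to 0$ bookkeeping for the boundary term in the case $\beta<0$, since a possible one-sided jump of $k^n$ at the origin could in principle contribute. The standard remedy is to observe that such a jump contributes at most $O(|\tau|^{-1})$, which is smaller than the target $O(|\tau|^{\beta-1})$ since $\beta-1>-1$, and can therefore be absorbed in the error. Uniformity in $n$ is automatic, as every constant appearing in the argument depends only on the pointwise bounds on $k^n$ and its derivatives, which are themselves uniform in $n$ by hypothesis.
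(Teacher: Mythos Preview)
The paper does not prove this lemma; it is quoted verbatim from \cite{fukasawa2019volatility} and only applied. So there is no ``paper's proof'' to compare to. Your argument for $\beta\in(0,1)$ (and $\beta=0$) is the standard one and is correct.

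There is, however, a genuine slip in the case $\beta\in(-1,0)$. In your ``expected obstacle'' paragraph you write that a possible jump of $k^n$ at $0$ contributes $O(|\tau|^{-1})$, ``which is smaller than the target $O(|\tau|^{\beta-1})$ since $\beta-1>-1$''. The inequality goes the wrong way: for $\beta\in(-1,0)$ one has $\beta-1\in(-2,-1)$, so $|\tau|^{-1}$ decays \emph{slower} than $|\tau|^{\beta-1}$ and cannot be absorbed. Concretely, $k(\lambda)=\mathrm{sign}(\lambda)$ on $(-\pi,\pi)$ satisfies all the stated hypotheses for any $\beta<0$ (its derivatives vanish off $0$), yet $\widehat{k}(\tau)\asymp |\tau|^{-1}$, contradicting the claimed $O(|\tau|^{\beta-1})$.

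The fix is not analytic but structural: in the source \cite{fukasawa2019volatility} and in every application in the present paper, the functions $k^n$ are \emph{even} (they are built from spectral densities), so $k^n(0^+)=k^n(0^-)$ and the boundary term at $\epsilon$ genuinely vanishes as $\epsilon\to0$. Your reduction $\widehat{k^n}(\tau)=(i\tau)^{-1}\widehat{\partial_\lambda k^n}(\tau)$ is then valid, and the rest of your argument goes through. You should therefore add evenness (or continuity of $k^n$ at $0$) as an explicit hypothesis in the $\beta<0$ case, and drop the incorrect absorption claim.
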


%

We can apply this lemma by Assumption \ref{assumption:decaying:psd}, dealing with the different cases depending on the value of $\alpha ( H_0 )$. Thus for any $\delta > 0$,
$
(  | k  |^{2-\delta} \widehat{\widetilde{g}^{n}_{H_0,\sigma_0}}(k) \widehat{h^{n}_\partial}(k)  )_k
$ is bounded uniformly for $n \geq 1$. Still using Assumption \ref{assumption:decaying:psd}, we can show that $\widetilde{g}^{n}_{H_0,\sigma_0} * h^{n}_\partial $ satisfies the Dirichlet conditions. so that it equals its Fourier series at any continuity point. Thus
\begin{align*}
\frac{1}{2\pi}
\int_{-\pi}^\pi
\frac{
\partial \widetilde{g}^{n}_{H_0,\sigma_0}(\lambda)
}{
\widetilde{g}^{n}_{H_0,\sigma_0}(\lambda)
}
d\lambda
=
\widetilde{g}^{n}_{H_0,\sigma_0} * h^{n}_\partial (0)
=
\sum_{k\in\mathbb{Z}} \widehat{\widetilde{g}^{n}_{H_0,\sigma_0}}(k) \widehat{h^{n}_\partial}(k).
\end{align*}
On the other side, we easily get
\begin{align*}
\ieEX{H_0,\sigma_0}{n}{Q_n( \mathbf{\widetilde{Y}}^n, h^{n}_\partial)}
&=
\frac{1}{n} \sum_{k,l = 1}^{n} \widehat{\widetilde{g}^{n}_{H_0,\sigma_0}}(k-l) \widehat{h^{n}_\partial}(k-l)
=
\sum_{|k| < n} \Big( 1 - \frac{|k|}{n} \Big) \widehat{\widetilde{g}^{n}_{H_0,\sigma_0}}(k) \widehat{h^{n}_\partial}(k).
\end{align*}
Combining these identities, the left hand side of Equation \eqref{eq:lemma:decaying:nabla_like_lastterm} reduces to 
\begin{align*}
2 \sum_{k \geq n } \widehat{\widetilde{g}^{n}_{H_0,\sigma_0}}(k) \widehat{h^{n}_\partial}(k)
-\frac{2}{n} \sum_{k= 0}^{n-1} |k| \widehat{\widetilde{g}^{n}_{H_0,\sigma_0}}(k) \widehat{h^{n}_\partial}(k)+\frac{\widehat{\widetilde{g}^{n}_{H_0,\sigma_0}}(0) \widehat{h^{n}_\partial}(0)}{n}.
\end{align*}
It is bounded for $\delta > 0$ by
$C 
( 
	\sum_{k \geq n } k^{\delta-2}
	+
	n^{-1} 
	\sum_{k= 0}^{n-1} k^{\delta-1}
) \leq Cn^{\delta-1}
= o ( n^{-1/2} )
$, which proves \eqref{eq:lemma:decaying:nabla_like_lastterm}.

We come back now to $\transp{v} \nabla \widetilde{U}_n (H_0,\sigma_0)$. Indeed, by what precedes, we rewrite it as
\begin{align*}
\transp{v} \nabla \widetilde{U}_n (H_0,\sigma_0) 
=
\frac{1}{2} (
\ieEX{H_0,\sigma_0}{n}{Q_n( \mathbf{\widetilde{Y}}^n, h^n )}
-
Q_n( \mathbf{\widetilde{Y}}^n, h^n )
)
+
o( n^{-1/2} )
\end{align*}
where 
$h^n = v_1 h^n_{\partial_H} + v_2 h^n_{\partial_{\widetilde \nu}}$.
We define 
$h_\partial$ as the limit of 
$h^{n}_\partial$, i.e. we have
$h_{\partial_{\widetilde{\nu}}}(\lambda) =
2\sigma_0^{-3} f_{H_0}(\lambda)^{-1}$ 
if 
$ \partial = \partial_{\widetilde{\nu}}$ 
and 
$h_{\partial_H}(\lambda) = \sigma_0^{-2} f_{H_0}(\lambda)^{-2}\partial_H f_{H_0} (\lambda) $ 
if 
$\partial = \partial_H$. Similarly, we write $h = v_1 h_{\partial_H} + v_2 h_{\partial_{\widetilde \nu}}$. By Proposition \ref{propo:decaying:clt_quadratic} (that applies if we take $r < \frac{1}{2}$ in Assumption \ref{assumption:decaying:psd:bounds_derf}), we see that under $\zproba_{H_0, \sigma_0}^{n}$, we have
\begin{align*}
\sqrt{n}
(
\ieEX{H_0,\sigma_0}{n}{Q_n( \mathbf{\widetilde{Y}}^n, h^n)}
-
Q_n( \mathbf{\widetilde{Y}}^n, h^n)
)
\to
\gaussian{0}{I_\partial}
\text{ where }
I_\partial
&= \frac{\sigma_0^4}{\pi} \int_{-\pi}^\pi 
(
f_{H_0}(\lambda) h(\lambda) )^2 d\lambda.
\end{align*}
Explicit computations gives  $I_\partial = 4\transp{v}Jv$, which proves Proposition
 \ref{propo:decaying:conv_nabla_like}.

%
%
%

\subsection{Proof of technical Lemma \ref{lemma:decaying:technical}}
\label{subsec:decaying:technical}

\subsubsection*{Proof of \ref{lemma:decaying:technical:1}}

Recall from Assumption \ref{assumption:decaying:psd} that we have
$
c_1  | \lambda  |^{-\alpha(H)}
\leq 
f_H(\lambda)
\leq 
c_2  | \lambda  |^{-\alpha(H)}
$
uniformly for $( H, \lambda ) \in  [ H_-, H_+  ] \times [-\pi, \pi] \backslash \{ 0 \}$. Using $n^{\gamma ( H_0 )}\tau_n \to 0$, $l(\lambda) = O( \lambda^2 )$ and setting $L^{-1} \leq  \widetilde{\nu} \leq L$, we have
$\widetilde{c_1}  | \lambda  |^{-\alpha(H)}
\leq 
\widetilde{g}^{n}_{H,\widetilde \nu}(\lambda)
\leq 
\widetilde{c_2}  | \lambda  |^{-\alpha(H)}
$ for some constants $\widetilde{c_1}$ and $\widetilde{c_2}$, uniformly for $( H, \widetilde{\nu} , \lambda ) \in 		 [ H_-; H_+  ]
		\times 
		 [ L^{-1}, L  ]
 \times ( [-\pi, \pi] \backslash \{ 0 \} )$. Note that these constants are independent from $\delta$ but (can) depend on $L$. Let $( H, \widetilde{\nu} ) 
		\in 
		 [ H_-, H_-(\delta)  ]  \times 
		 [ L^{-1}, L  ]$. Then we have $\alpha(H) \leq \alpha(H_0) - 1 + \delta$ so for all $0 < |\lambda| < \pi$ we have 
$
 | \lambda  |^{\alpha ( H )} 
\geq  
 | \lambda  |^{\alpha ( H_0 ) - 1 + \delta}
 | \pi  |^{\alpha ( H_- ) - \alpha ( H_0 ) + 1 - \delta}
$.
In particular, we deduce that 
\begin{align*}
\widetilde{U}_n ( H, \widetilde{\nu} ) 
&\geq 
\frac{1}{4\pi}
\int_{-\pi}^\pi
\log \big(
\widetilde{c_1}  | \lambda  |^{-\alpha(H)}
\big)
+
\frac{
I_n\big( \lambda, \mathbf{\widetilde{Y}}^n \big) 
}{
\widetilde{c_2}  | \lambda  |^{-\alpha(H)}
}
d\lambda
\\
&\geq 
\frac{1}{4\pi}
\int_{-\pi}^\pi
\log (
\widetilde{c_1} 
)
-(2H-1)
\log (
 | \lambda  |
)
+
 | \pi  |^{\alpha ( H_- ) - \alpha ( H_0 ) + 1 - \delta}
\frac{
I_n\big( \lambda, \mathbf{\widetilde{Y}}^n \big) 
 | \lambda  |^{\alpha ( H_0 ) - 1 + \delta}
}{
\widetilde{c_2}
}
d\lambda
\\
&\geq
\frac{\log (
\widetilde{c_1} 
)}{2}
- (2H_+-1) ( \log \pi - 1 )
+ C
\int_{-\pi}^\pi
I_n\big( \lambda, \mathbf{\widetilde{Y}}^n \big) 
 | \lambda  |^{\alpha ( H_0 ) - 1 + \delta}
d\lambda
\end{align*}
where $C$ is a positive constant independent from $H$ and $\widetilde{\nu}$. Indeed, except the right-most integral, everything is deterministic, so the lemma is proved if we show that for all $r$, there exist $\delta_0 > 0$ such that for $\delta < \delta_0$ we have
\begin{align*}
\zproba_{H_0, \sigma_0}^{n}
\bigg(
	\frac{1}{2\pi}\int_{-\pi}^\pi
I_n\big( \lambda, \mathbf{\widetilde{Y}}^n \big) 
 | \lambda  |^{\alpha ( H_0 ) - 1 + \delta}
	d\lambda
\leq
	r
\bigg)
\to
0.
\end{align*}
Applying Proposition \eqref{propo:decaying:conv_quad:simple}, we have in $L^2(\zproba_{H_0, \sigma_0}^{n})$:
\begin{align*}
\frac{1}{2\pi}\int_{-\pi}^\pi
I_n\big( \lambda, \mathbf{\widetilde{Y}}^n \big) 
 | \lambda  |^{\alpha ( H_0 ) - 1 + \delta}
	d\lambda 
\to
\frac{\sigma_0^2}{2\pi}
\int_{-\pi}^\pi
f_{H_0}(\lambda)  | \lambda  |^{\alpha ( H_0 ) - 1 + \delta}
d\lambda.
\end{align*}
We conclude since Assumption \ref{assumption:decaying:psd} yields
\begin{align*}
\frac{\sigma_0^2}{2\pi}
\int_{-\pi}^\pi
f_{H_0}(\lambda)  | \lambda  |^{\alpha ( H_0 ) - 1 + \delta}
d\lambda &\geq 
\frac{c_1 \sigma_0^2}{2\pi}
\int_{-\pi}^\pi
 | \lambda  |^{-1 + \delta }
d\lambda
= 
\frac{c_1 \sigma_0^2}{\pi}
\frac{ | \pi  |^{\delta }}{\delta } \to \infty \text{ when } \delta \to 0.
\end{align*}

\subsubsection*{Proof of \ref{lemma:decaying:technical:2}}

Let $( H, \widetilde{\nu} ) 
		\in 
		 [ H_-( \delta ), H_+  ]  \times  [ \sigma_-n^{H_0-H_+}, L^{-1}  ]$. We have
\begin{align*}
\widetilde{U}_n ( H, \widetilde{\nu} ) 
&\geq 
\frac{1}{4\pi}
\int_{-\pi}^\pi
\log (
\widetilde \nu^2
\widetilde{f}_{H}^n(\lambda)
)
+
\frac{
I_n\big( \lambda, \mathbf{\widetilde{Y}}^n \big) 
}{
\widetilde \nu^2
f_{H}(\lambda) + \tau_n^2 n^{2\gamma ( H_0)}l(\lambda)
}
d\lambda
\\
&\geq 
\frac{1}{4\pi\widetilde \nu^2}
\int_{-\pi}^\pi
\frac{
I_n\big( \lambda, \mathbf{\widetilde{Y}}^n \big) 
}{
f_{H}(\lambda) + {\tau_n^2 n^{2\gamma ( H_0)}\widetilde \nu^{-2} l(\lambda)}
}
d\lambda
+ \log ( \widetilde \nu )
+
\frac{1}{4\pi}
\int_{-\pi}^\pi
\log (
f_{H}(\lambda)
)
d\lambda
\\
&\geq 
\frac{1}{4\pi\widetilde \nu^2}
\int_{-\pi}^\pi
\frac{
I_n\big( \lambda, \mathbf{\widetilde{Y}}^n \big) 
}{
f_{H}(\lambda) + \sigma_-^{-2} \tau_n^2 n^{2\gamma ( H_+)}l(\lambda)
}
d\lambda
+ \log ( \widetilde \nu )
+
C^*
\end{align*}
where $C^* = \min_{h\in [ H_-, H_+  ]} ( (4\pi)^{-1} \int \log ( f_h(\lambda)) d\lambda) > - \infty$ does not depend on $L$. Moreover, since $l(\lambda) = O( \lambda^2 )$ and $n^{2 \gamma ( H_+)} \tau_n^2 = O(1)$, we know from Assumption \ref{assumption:decaying:psd} that there exists $c_1$ independent from $L$ and $H$ such that $f_{H}(\lambda) + \sigma_-^{-2} \tau_n^2 n^{2 \gamma ( H_+)}l(\lambda) \leq c_1 |\lambda|^{-\alpha(H)}$ for any $0 < |\lambda| < \pi$. $\alpha$ is increasing and $|\lambda| \leq \pi$ so we get
\begin{align*}
\widetilde{U}_n ( H, \widetilde{\nu} )
&\geq
\frac{1}{4\pi c_1\widetilde \nu^2}
\int_{-\pi}^\pi
|\lambda|^{\alpha(H)} 
I_n\big( \lambda, \mathbf{\widetilde{Y}}^n \big) 
d\lambda
+ \log ( \widetilde \nu )
+
C^*
\\
&\geq
\frac{1}{4\pi c\widetilde \nu^2}
\int_{-\pi}^\pi
|\lambda|^{\alpha(H_+)} 
I_n\big( \lambda, \mathbf{\widetilde{Y}}^n \big) 
d\lambda
+ \log ( \widetilde \nu )
+
C^*
\end{align*}
for some constant $c>0$. This last quantity does not depend on $H$ and is minimal when we take
$\widetilde \nu^*
=
\widetilde \nu^*_n
=
\big(
(2\pi c)^{-1}
\int_{-\pi}^\pi
|\lambda|^{\alpha(H_+)} 
I_n\big( \lambda, \mathbf{\widetilde{Y}}^n \big) 
d\lambda
\big)^{1/2}
$. Thus $\salS( 
		 [ H_-( \delta ), H_+  ] \times  [ \sigma_-n^{\gamma ( H_0 )-\gamma ( H_+)}, L^{-1}  ]$ as defined in Lemma \ref{lemma:decaying:technical} is bounded by
\begin{align*}
&\zproba_{H_0, \sigma_0}^{n}
\Big(
\frac{L^2}{4\pi c}
\int_{-\pi}^\pi
|\lambda|^{\alpha(H_+)} 
I_n\big( \lambda, \mathbf{\widetilde{Y}}^n \big) 
d\lambda
- \log ( L )
+
C
\leq
	\widetilde{U}_\infty ( H_0, \sigma_0 ) + \varepsilon\;,\;\widetilde \nu^*_n > L^{-1} 
\Big)
\\&\;\;\;\;+
\ieproba{H_0, \sigma_0}{n}
{
	\log ( (\widetilde{\nu}^*_n)^2
)/2 +
	1/2 + C^*
\leq
	\widetilde{U}_\infty ( H_0, \sigma_0 ) + \varepsilon\;,\;\widetilde \nu^*_n \leq L^{-1} 
}.
\end{align*}

From now, it is enough to prove that there exists $r>0$ such that 
\begin{align*}
\zproba_{H_0, \sigma_0}^{n}
\bigg(
	\frac{1}{2\pi}
\int_{-\pi}^\pi
 | \lambda  |^{\alpha(H_+)}
I_n\big( \lambda, \mathbf{\widetilde{Y}}^n \big) 
d\lambda
 \leq r
\bigg)
\to 0.
\end{align*}
We conclude by Proposition \ref{propo:decaying:conv_quad:simple}, with $\varphi^n(\lambda) = \varphi(\lambda) =  | \lambda  |^{\alpha(H_+)}$ and $\beta = - \alpha(H_+) < 1 - \alpha(H_0)$.

\subsubsection*{Proof of \ref{lemma:decaying:technical:3}}

As for the proof of Lemma \ref{lemma:decaying:technical:1}, we start by recalling Assumption \ref{assumption:decaying:psd}:
\begin{align*}
\forall\, ( H, \lambda ) \in  [ H_-, H_+  ] \times [-\pi, \pi] \backslash \{ 0 \}, \;
c_1  | \lambda  |^{-\alpha(H)}
\leq 
f_H(\lambda)
\leq 
c_2  | \lambda  |^{-\alpha(H)}.
\end{align*}
Using both $n^{\gamma ( H_0)}\tau_n \to 0$ (since $( n^{\gamma ( H_+)}\tau_n )_n$ is bounded) and $l(\lambda) = O( \lambda^2 )$, we have
\begin{align*}
\forall\, ( H , \lambda ) \in  [ H_-, H_+  ] \times ( [-\pi, \pi] \backslash \{ 0 \} ), \;
\widetilde{c_1}  | \lambda  |^{-\alpha(H)}
\leq 
f_H(\lambda) + n^{2\gamma ( H_0)}\sigma_-^{-2}\tau_n^2 l(\lambda)
\leq 
\widetilde{c_2}  | \lambda  |^{-\alpha(H)}.
\end{align*}
Let now $( H, \widetilde{\nu} ) 
		\in 
		 [ H_-,  H_-( \delta )  ] \times  [ \sigma_-n^{H_0-H_+}, L^{-1}  ]$. We have
\begin{align*}
\widetilde{U}_n ( H, \widetilde{\nu} ) 
&\geq
\log ( \widetilde \nu )
+
\frac{1}{4\pi}
\int_{-\pi}^\pi
\log (
f_{H}(\lambda)
)
d\lambda
+
\frac{1}{4\pi\widetilde{\nu}^2}
\int_{-\pi}^\pi
\frac{
I_n\big( \lambda, \mathbf{\widetilde{Y}}^n \big) 
}{
f_H(\lambda) + n^{2\gamma ( H_0)}\widetilde{\nu}^{-2}\tau_n^2 l(\lambda)
}
d\lambda.
\end{align*}
Notice also $C^* = \min_{h\in [ H_-, H_+  ]} ( (4\pi)^{-1} \int \log ( f_h(\lambda)) d\lambda) > - \infty$ does not depend on $L$, $\epsilon$ or $\delta$. Remark that $ \min_{x>0} x^2 \log(x) = -(2e)^{-1}$ and $\sigma_- n^{\gamma ( H_0) - \gamma ( H_+)} \leq \widetilde{\nu} \leq L^{-1}$, so that we get
\begin{align*}
\widetilde{U}_n ( H, \widetilde{\nu} ) 
&\geq
\frac{-1}{2e\widetilde{\nu}^2}
+
C
+
\frac{1}{4\pi\widetilde{\nu}^2}
\int_{-\pi}^\pi
\frac{
I_n\big( \lambda, \mathbf{\widetilde{Y}}^n \big) 
}{
f_H(\lambda) + n^{2\gamma ( H_0)}\widetilde{\nu}^{-2}\tau_n^2 l(\lambda)
}
d\lambda
\\
&\geq 
\frac{1}{\widetilde{\nu}^2}
\bigg(
\frac{-1}{2e}
+
\frac{1}{4\pi\widetilde{c_1}}
\int_{-\pi}^\pi
 | \lambda  |^{\alpha(H)}
I_n\big( \lambda, \mathbf{\widetilde{Y}}^n \big) 
d\lambda
\bigg)
+
C^*.
\end{align*}

Moreover, notice that as $H \in  [ H_-,  H_-( \delta )  ]$, we have $\alpha(H) < \alpha(H_0) -1 + \delta$ so that we get: $|\lambda|^{\alpha(H)} \geq \pi^{\alpha(H) - \alpha(H_0) + 1 - \delta} |\lambda|^{\alpha(H_0) -1 + \delta} \geq |\lambda|^{\alpha(H_0) -1 + \delta}$. We obtain
\begin{align}
\label{eq:decaying:technical:3:eq1}
\widetilde{U}_n ( H, \widetilde{\nu} ) 
&\geq
\frac{1}{\widetilde{\nu}^2}
\bigg(
\frac{-1}{2e}
+
\frac{1}{4\pi\widetilde{c_1}}
\int_{-\pi}^\pi
|\lambda|^{\alpha(H_0) -1 + \delta}
I_n\big( \lambda, \mathbf{\widetilde{Y}}^n \big) 
d\lambda
\bigg)
+
C^*.
\end{align}
Applying Proposition \ref{propo:decaying:conv_quad:simple} to $\varphi^n(\lambda) = \varphi(\lambda) =  | \lambda  |^{\alpha(H_0) - 1 + \delta}$ with $\beta = 1 - \alpha(H_0) - \delta < 1 - \alpha ( H_0 )$,  we have the following convergence in $L^2$
\begin{align}
\label{eq:decaying:technical:3:eq2}
\frac{1}{2\pi}
\int_{-\pi}^\pi
|\lambda|^{\alpha(H_0) -1 + \delta}
I_n\big( \lambda, \mathbf{\widetilde{Y}}^n \big) 
d\lambda
\to
\frac{\sigma_0}{2\pi}
\int_{-\pi}^\pi
|\lambda|^{\alpha(H_0) -1 + \delta}
f_{H_0}(\lambda)
d\lambda.
\end{align}
As in Lemma \ref{lemma:decaying:technical:1}, we also have:
\begin{align}
\label{eq:decaying:technical:3:eq3}
\frac{\sigma_0}{2\pi}
\int_{-\pi}^\pi
|\lambda|^{\alpha(H_0) -1 + \delta}
f_{H_0}(\lambda)
d\lambda
\geq
\frac{c_1\sigma_0^2}{\pi}\frac{\pi^\delta}{\delta}
\to_{\delta\to 0} \infty.
\end{align}
We can easily conclude combining \eqref{eq:decaying:technical:3:eq1}, \eqref{eq:decaying:technical:3:eq2} and \eqref{eq:decaying:technical:3:eq3}.


\subsubsection*{Proof of \ref{lemma:decaying:technical:4}}

Let $( H, \widetilde{\nu} ) 
		\in 
		 [ H_-, H_+  ]   \times  [ L, \sigma_+n^{\gamma( H_0 ) - \gamma( H_+)} ]$ so that \begin{align*}
\widetilde{U}_n ( H, \widetilde{\nu} ) 
\geq 
(4\pi)^{-1} \log ( \widetilde{\nu} f_H(\lambda) ) d\lambda
\geq \log L + C^*
\end{align*} where $C^* = \min_h ( (4\pi)^{-1} \log ( f_h(\lambda)) d\lambda) > - \infty$ does not depend on $L$. As $\log L \to \infty$ as $L\to \infty$, we can conclude.

\section{Proof of Theorem \ref{thm:decaying:LAN}}
\label{sec:proof:LAN}

\subsection{Outline of the proof}

For conciseness we will write $ \theta = (H, \sigma)$ and $\theta_0 = (H_0, \sigma_0)$ throughout the proof. Let $u=(u_H, u_\sigma) \in \mathbb{R}^2$. We define $Z_n(u) = dP^n_{\theta_0 + \varphi_n u}/dP^n_{\theta_0}$ as in the definition \ref{def:decaying:LAN}. We also introduce the $-2$-log-likelihood $\salL_n(\theta)$ corresponding to the Gaussian observations $Y^{n}$. By Taylor formula, we have
\begin{align*}
-2 \log ( Z_n(u) ) 
&=
\transp{u} 
\transp{\varphi_n} 
\nabla \salL_n ( \theta_0 )
+
\frac{1}{2}
\transp{u} 
\transp{\varphi_n} 
\nabla^2 \salL_n ( \theta_0 )
\varphi_n
u
\\
&\;\;\;\;+
\tfrac{1}{2}
\sum_{i,j,k \in \{ H, \sigma \}}
\int_0^1
(1-s)^2
\partial^3_{ijk}
\salL_n (\theta_0 + s\varphi_n u)
( \varphi_n u )_i
( \varphi_n u )_j
( \varphi_n u )_k
ds.
\end{align*}
The proof is therefore an easy consequence of the following lemma:
\begin{lemma}
\label{lemma:lan:sketch}
\begin{enumerate}
[label=(\Roman*), ref=\concatenate{\ref{lemma:lan:sketch}}{{(\Roman*)}}] \setlength{\itemsep}{0pt}
  \setlength{\parskip}{0pt}
\item 
\label{lemma:lan:sketch:1}
$\transp{\varphi_n} 
\nabla \salL_n ( \theta_0 ) \to \gaussian{0}{4I}$ in distribution, under $P^n_{\theta_0}$.
\item 
\label{lemma:lan:sketch:2}
$\transp{\varphi_n} 
\nabla^2 \salL_n ( \theta_0 )
\varphi_n \to 2I$ in distribution, under $P^n_{\theta_0}$.
\item 
\label{lemma:lan:sketch:3} For any $i,j,k$ corresponding either to "$H$" (or $1$) or "$\sigma$" (or $2$), we have
\begin{align*}
\sup_{0 \leq s \leq 1}
 \Big|
\partial^3_{ijk}
\salL_n (\theta_0 + s\varphi_n u)
( \varphi_n u )_i
( \varphi_n u )_j
( \varphi_n u )_k
 \Big|
\to 0 \text{ under } P^n_{\theta_0}.
\end{align*}
\end{enumerate}
\end{lemma}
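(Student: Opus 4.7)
The starting point is the closed form
\begin{align*}
\salL_n(\theta) = n\log(2\pi) + \log\det\Gamma_n(\theta) + \transp{Y^n}\Gamma_n(\theta)^{-1}Y^n,
\end{align*}
where $\Gamma_n(\theta)$ is the Toeplitz covariance matrix of $Y^n$ (spectral density $f^n_{H,\sigma}$). Using the identities $\partial_i\log\det\Gamma = \Tr(\Gamma^{-1}\partial_i\Gamma)$ and $\partial_i\Gamma^{-1} = -\Gamma^{-1}(\partial_i\Gamma)\Gamma^{-1}$, every partial derivative of $\salL_n$ unfolds as a sum of traces of products of the Toeplitz-like matrices $\Gamma_n^{-1}, \partial_{i_j}\Gamma_n, \partial^2_{i_j i_k}\Gamma_n,\dots$ and of quadratic forms in $Y^n$ whose kernels are such products. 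Replacing $\Gamma_n$ and its derivatives by the Toeplitz matrices $\Sigma_n(f^n_\theta)$ and $\Sigma_n(\partial_{i_j} f^n_\theta)$ turns every such quadratic form into a $Q_n(\mathbf{Y}^n,\varphi)$ of the type studied in Section~\ref{subsec:decaying:quad}, up to Toeplitz-vs-circulant errors that are controlled by the Fourier-coefficient bounds of Lemma~\ref{lemma:fukasawa:E12}.

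As in Section~\ref{subsec:decaying:outline}, to bypass the self-similarity degeneracy between $H$ and $\sigma$ the plan is to switch to the oracle parametrisation $(H,\widetilde\nu)$ with $\widetilde\nu = n^{\gamma(H_0)}\sigma n^{-\gamma(H)}$. The chain rule rewrites $\transp{\varphi_n}\nabla\salL_n(\theta_0)$ and $\transp{\varphi_n}\nabla^2\salL_n(\theta_0)\varphi_n$ as the gradient and Hessian of the oracle likelihood along a new rate matrix $\widetilde\varphi_n$; a direct computation shows that the four hypotheses on $(\alpha_n,\beta_n,\gamma_n,\overline{\gamma_n})$ are exactly what ensures that $\widetilde\varphi_n$ converges to a non-degenerate matrix with columns $(\alpha,\gamma)$ and $(\overline\alpha,\overline\gamma)$. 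The whole problem thus reduces to the analysis of the first and second derivatives of the oracle contrast $\widetilde U_n$ at $(H_0,\sigma_0)$, already carried out in Proposition~\ref{propo:decaying:likelihood}.

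For part~\ref{lemma:lan:sketch:2}, taking expectations first turns $\zEX^n_{\theta_0}[\transp{\varphi_n}\nabla^2\salL_n(\theta_0)\varphi_n]$ into a sum of traces of Toeplitz products whose limits are given by Proposition~\ref{propo:toeplitz_FT_asymptotic}; a straightforward identification recovers the matrix $2I$ of the theorem. Convergence in probability follows because the variance of each entry is of the form $n^{-2}\Tr[(\Sigma_n\Sigma_n)^2]$, which tends to zero, exactly as in the proof of Proposition~\ref{propo:decaying:conv_quad:uniform}. For part~\ref{lemma:lan:sketch:1}, after subtracting a deterministic correction of order $n^{-1/2}$ controlled by Lemma~\ref{lemma:fukasawa:E12}—precisely the mechanism used in~\eqref{eq:lemma:decaying:nabla_like_lastterm}—the score becomes a linear combination of centred quadratic forms $\sqrt{n}(Q_n(\mathbf{Y}^n,h^n)-\zEX^n_{\theta_0}[Q_n(\mathbf{Y}^n,h^n)])$ with explicit kernels. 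Proposition~\ref{propo:decaying:clt_quadratic}, combined with the Cramér--Wold device for the joint convergence, produces the Gaussian limit, and the $k=2$ cumulant trace identifies the asymptotic covariance with $4I$.

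The technical obstacle is part~\ref{lemma:lan:sketch:3}. Using the bounds of Assumption~\ref{assumption:decaying:psd:bounds_derf} on $\partial^3_{ijk} f_H$ and propagating them through the matrix identities for $\partial^3_{ijk}\salL_n$, one gets a uniform-in-$s$ bound of order $n$ on each third derivative evaluated at $\theta_0+s\varphi_n u$. Since each entry of $\varphi_n$ is $O(n^{-1/2}\log n)$—the logarithm coming from the $\sigma$-column of $\varphi_n$—the triple product is of order $n^{-1/2}\log^3 n$ times a quantity bounded in $\zproba^n_{\theta_0}$-probability, hence $o_{\zproba^n_{\theta_0}}(1)$. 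The true subtlety is to track carefully how the cancellation between the $H$- and $\sigma$-directions, encoded in the condition $\gamma_n\to\gamma$, survives throughout the third-order expansion, so that no logarithmic loss accumulates beyond what is compensated by the $n^{-1/2}$ factors.
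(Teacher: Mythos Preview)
Your proposal contains a genuine gap: the reduction of $\salL_n$ to the Whittle contrast $\widetilde U_n$ via reparametrisation is not valid. A change of coordinates $(H,\sigma)\mapsto(H,\widetilde\nu)$ rewrites the \emph{exact} log-likelihood $\salL_n$ in new variables, but it does not turn $\salL_n$ into $\widetilde U_n$; the latter is an \emph{approximation} in which $\Sigma_n(f)^{-1}$ is replaced by $\Sigma_n(1/f)$. Consequently the score $\partial\salL_n(\theta_0) = \Tr(\Sigma_n^{-1}\Sigma_n(\partial f)) - \transp{Y}\Sigma_n^{-1}\Sigma_n(\partial f)\Sigma_n^{-1}Y$ is a quadratic form whose kernel $\Sigma_n^{-1}\Sigma_n(\partial f)\Sigma_n^{-1}$ is \emph{not} Toeplitz, so it is not a $Q_n(\mathbf Y^n,\varphi)$ in the sense of Section~\ref{subsec:decaying:quad}, and Propositions~\ref{propo:decaying:conv_quad} and~\ref{propo:decaying:clt_quadratic} do not apply directly. (Note also that this score is already exactly centred under $P^n_{\theta_0}$, so the correction mechanism of~\eqref{eq:lemma:decaying:nabla_like_lastterm} is not relevant here.) The paper instead computes the cumulants of $\zeta_n(u,v)$ via Lemma~\ref{lemma:moments_quad_form} and passes to the limit with Proposition~\ref{propo:toeplitz:inversetoplitz}, which is designed precisely for traces of products containing $\Sigma_n(f)^{-1}$. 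The hypotheses on $\alpha_n,\beta_n,\gamma_n$ enter not through a coordinate change but through the algebraic identity $\widetilde\alpha_n\partial_H f^n_{\theta_0}+\widetilde\beta_n\partial_\sigma f^n_{\theta_0}=n^{-2\gamma(H_0)}h^n_{\theta_0}$ with $h^n_{\theta_0}\to h^\infty_{\theta_0}$; this is what makes the cumulant limits finite.

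For part~\ref{lemma:lan:sketch:3} your sketch is also off. The paper does \emph{not} rely on any $H$--$\sigma$ cancellation surviving at third order; it simply uses the crude bound $\|\varphi_n u\|\lesssim n^{-1/2}\log n$ and then shows $\log^3(n)\,n^{-3/2}\sup_s|\partial^3_{ijk}\salL_n(\theta_0+s\varphi_n u)|\to 0$. The substantive work is bounding the quadratic forms such as $\transp{Y}\Sigma_n^{-1}(\Sigma_n(\partial_H f^n_\theta)\Sigma_n^{-1})^3 Y$ \emph{uniformly} for $\theta$ in a neighbourhood of $\theta_0$. This is done via the spectral norm: factor through $\Sigma_n^{-1/2}(f^n_{\theta_0})Y$, bound operator norms of the form $\|\Sigma_n^{-1/2}(f^n_\theta)\Sigma_n^{1/2}(|\partial_H f^n_\theta|)\|_{2,n}$ by $Kn^{C\varepsilon}$ using Lemma~2.1 of \cite{brouste2018lan}, and finish with $\|\Sigma_n^{-1/2}(f^n_{\theta_0})Y\|^2\sim\chi^2_n$. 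Assumption~\ref{assumption:decaying:psd:bounds_derf} feeds into this spectral-norm estimate, not into a direct trace bound as you suggest.
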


\subsection{Preliminary: Toeplitz matrices}

The purpose of this section is to prove the convergence of the trace of products/inverse of Toeplitz matrices that is often used in the proof of Lemma \ref{lemma:lan:sketch}. This result is a generalization of Theorem 2.3 of \cite{cohen2011lan} that was not found in the literature by the author. Let us first state this theorem that is the base of our work:

\begin{proposition}
\label{propo:toeplitz:cohen}
Let $\Theta^* \subset \mathbb{R}^m$ be a compact set and let $p \in \setN \backslash \{0\}$. Consider for any $1 \leq l \leq p$ and any $\theta \in \Theta^*$ some even functions $f_{l, \theta}: [-\pi, \pi] \to [0, \infty]$ and $g_{l, \theta}= [-\pi, \pi] \to [-\infty, \infty]$ such that the following conditions hold
\begin{tight_enumerate}
\item \label{thm:toeplitz:cohen:hyp1} for any $1 \leq l \leq p$ and any $\theta \in \Theta^*$, $f_{l, \theta}$ and $g_{l, \theta}$ are differentiable on $[-\pi, \pi] \backslash \{0\}$.
\item \label{thm:toeplitz:cohen:hyp2} for any $1 \leq l \leq p$, any $n \geq 1$ and any $\theta \in \Theta^*$, the functions
$(\theta, \lambda) \mapsto f_{l, \theta}(\lambda)$,
$(\theta, \lambda) \mapsto \partial_\lambda f_{l, \theta}(\lambda)$,
$(\theta, \lambda) \mapsto g_{l, \theta}(\lambda)$, and
$(\theta, \lambda) \mapsto \partial_\lambda g_{l, \theta}(\lambda)$
are continuous on  on $\Theta^* \times ( [-\pi, \pi] \backslash \{0\}) $.
\item \label{thm:toeplitz:cohen:hyp3} There exist continuous functions $\alpha: \Theta^* \to (-1, 1)$ and $\beta: \Theta^* \to (-\infty, 1)$ such that for any $\delta > 0$, for every $\theta \in \Theta^*$,  any $\lambda \in [-\pi, \pi] \backslash \{0\}$ and any $1 \leq l \leq p$, there exists $c, C > 0$ constants depending only on $\delta$ and on $\Theta^*$ such that:
$c | \lambda |^{-\alpha(\theta) + \delta}
\leq
f_{l,\theta}(\lambda)
\leq
C |\lambda|^{-\alpha(\theta) - \delta}
$, $
 |
\partial_\lambda f_{l, \theta}(\lambda)
 |
\leq
C |\lambda|^{-\alpha(\theta) - \delta - 1}
$ and 
$ | g_{l,\theta}(\lambda)
 |
\leq
C |\lambda|^{-\beta(\theta) - \delta}
$.
\item \label{thm:toeplitz:cohen:hyp4} For any $\theta \in \Theta^*$, we have $p(\beta(\theta) - \alpha(\theta))<1$.
\end{tight_enumerate}

Then we have the following limit:
\begin{align*}
\lim\limits_{n\to\infty}
\sup_{\theta \in \Theta^*}
 \bigg|
\frac{1}{n} \Tr \bigg(
	\prod_{j=1}^p
	\Sigma_n( f_{j,\theta} )^{-1}
	\Sigma_n( g_{j,\theta} )
\bigg)
-
\frac{1}{2\pi}
\int_{-\pi}^{\pi}
\prod_{j=1}^p
	f_{j,\theta}( \lambda)^{-1}
	g_{j,\theta}( \lambda)
d\lambda
 \bigg|
=
0.
\end{align*}
\end{proposition}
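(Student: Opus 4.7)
The overall strategy is to replace each matrix inverse $\Sigma_n(f_{j,\theta})^{-1}$ by the Toeplitz matrix $\Sigma_n(f_{j,\theta}^{-1})$ of the reciprocal symbol, reducing the problem to one without inverses that is handled by the multi-product extension of Proposition \ref{propo:toeplitz_FT_asymptotic}, and then upgrading to uniformity in $\theta$ by compactness of $\Theta^*$. Writing $A_{j,n}(\theta) = \Sigma_n(f_{j,\theta})$, $B_{j,n}(\theta) = \Sigma_n(f_{j,\theta}^{-1})$, and $C_{j,n}(\theta) = \Sigma_n(g_{j,\theta})$, the matrix identity $A^{-1}-B = A^{-1}(I-AB)$ reduces the substitution error to a bound on $\|I - A_{j,n}(\theta) B_{j,n}(\theta)\|$ in suitable Schatten norms, since $\|A_{j,n}(\theta)^{-1}\|_{\mathrm{op}}$ stays bounded (up to $\log$ factors) by the lower bound in hypothesis (3).

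For pointwise convergence at a fixed $\theta$, the core step is a quantified Szegő-type estimate
\begin{equation*}
\|I - A_{j,n}(\theta) B_{j,n}(\theta)\|_{\mathcal{S}_q} = o(n^{1/q})
\end{equation*}
in an appropriate Schatten-$q$ norm, which is then exploited via a telescoping expansion of $\prod_j A_{j,n}^{-1} C_{j,n} - \prod_j B_{j,n} C_{j,n}$, Hölder's inequality for Schatten norms (with $\sum_i 1/q_i = 1$), and Avram--Parter-type bounds $\|\Sigma_n(\varphi)\|_{\mathcal{S}_q} = O(n^{1/q})$ for integrable symbols. Hypothesis (4), namely $p(\beta(\theta) - \alpha(\theta)) < 1$, is precisely the exponent-balance condition that makes all Schatten norms composable and guarantees that the trace error is $o(n)$. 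Once the substitution is legitimate, I would invoke the multi-product extension of Proposition \ref{propo:toeplitz_FT_asymptotic} (the one discussed just after its statement) with $\varphi^\infty_{2j} = f_{j,\theta}^{-1}$ and $\varphi^\infty_{2j+1} = g_{j,\theta}$, which delivers the desired integral limit.

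To upgrade to uniformity, I would cover $\Theta^*$ by finitely many open balls $U_1, \ldots, U_N$ with centers $\theta_i$, and establish equicontinuity in $\theta$, uniformly in $n$, of both the trace and the integral. For the integral this follows from dominated convergence using the joint continuity (2) and the domination (3). For the trace, one differentiates in $\theta$ via $\partial_\theta \Sigma_n(f)^{-1} = -\Sigma_n(f)^{-1}\Sigma_n(\partial_\theta f)\Sigma_n(f)^{-1}$ and bounds the result uniformly through the same Schatten--Hölder framework; hypotheses (1)--(3) guarantee that all constants depend continuously on $\theta$ and are therefore uniformly bounded on the compact $\Theta^*$. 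A triangle inequality then reduces the uniform claim to a finite maximum over the $\theta_i$, which vanishes by the pointwise stage.

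The main obstacle is the Szegő-type estimate on $\|I - A_{j,n}(\theta) B_{j,n}(\theta)\|_{\mathcal{S}_q}$ under the possibly singular behaviour of $f_{j,\theta}$ at $\lambda = 0$: the diverging scales of $\Sigma_n(f)$ and $\Sigma_n(1/f)$ compete, and the Schatten exponents must be chosen so that all powers of $n$ cancel. This cancellation is exactly what hypothesis (4) enforces. The argument follows the pattern used in Theorem 2.3 of \cite{cohen2011lan}, where the $p=1$ case is handled; the extension to general $p$ introduces no new analytical ingredient beyond careful tracking of the telescoping terms.
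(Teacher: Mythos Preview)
The paper does not prove Proposition~\ref{propo:toeplitz:cohen} at all: it is quoted verbatim as Theorem~2.3 of \cite{cohen2011lan} (``Let us first state this theorem that is the base of our work''), and the paper's own contribution in this subsection is Proposition~\ref{propo:toeplitz:inversetoplitz}, which extends it to $n$-dependent symbols by a compactness trick on $\Theta^* \times \{n^{-1}:n\geq 1\}\cup\{0\}$. So there is no in-paper proof to compare against; you are reconstructing a proof of a cited result.

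Your broad strategy---replace $\Sigma_n(f)^{-1}$ by $\Sigma_n(1/f)$, control the discrepancy in Schatten norms via a telescoping expansion and H\"older, then invoke a Fox--Taqqu/Avram--Parter product limit---is the standard route and is essentially what one finds in the literature around \cite{cohen2011lan}. Two points deserve correction, however. First, your closing remark that Theorem~2.3 of \cite{cohen2011lan} ``handles the $p=1$ case'' contradicts the paper, which identifies Proposition~\ref{propo:toeplitz:cohen} \emph{for general $p$} with that theorem; the generalisation the present paper claims is to $n$-dependent symbols, not to larger $p$. Second, and more substantively, your uniformity-in-$\theta$ step differentiates in $\theta$ via $\partial_\theta \Sigma_n(f)^{-1} = -\Sigma_n(f)^{-1}\Sigma_n(\partial_\theta f)\Sigma_n(f)^{-1}$, but hypothesis~\ref{thm:toeplitz:cohen:hyp2} only grants \emph{continuity} of $(\theta,\lambda)\mapsto f_{l,\theta}(\lambda)$ and of $\partial_\lambda f_{l,\theta}(\lambda)$, not differentiability in $\theta$. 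As written, that step is unjustified. You would need to replace it by a modulus-of-continuity argument (uniform continuity on the compact $\Theta^*$ together with the quantitative domination in hypothesis~\ref{thm:toeplitz:cohen:hyp3}), or else add a $\theta$-differentiability assumption that the proposition does not carry.
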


We give here a version of this result where we consider sequences of functions $f_\theta$ and $g_\theta$:

\begin{proposition}
\label{propo:toeplitz:inversetoplitz}
Let $\Theta^* \subset \mathbb{R}^m$ be a compact set and let $p \geq 1$ an integer. Consider for any $1 \leq l \leq p$, any $n \geq 1$ of $n = \infty$ and any $\theta \in \Theta^*$ some even functions $f^{n}_{l, \theta}= [-\pi, \pi] \to [0, \infty]$ and $g^{n}_{l, \theta}= [-\pi, \pi] \to [-\infty, \infty]$ such that the following conditions hold
\begin{tight_enumerate}
\item For each $n \in ( \setN \backslash \{ 0 \} ) \cup \{ \infty \}$ the functions $f^{n}$ and $g^{n}$ satisfy Assumptions \ref{thm:toeplitz:cohen:hyp1} to \ref{thm:toeplitz:cohen:hyp4} given in Proposition \ref{propo:toeplitz:cohen} for some functions $\alpha$, $\beta$ independant of $n$.
\item \label{thm:toeplitz:inversetoplitz:hyp_limit} For any sequence $\theta_n \to \theta$ in $\Theta^*$ and any sequence $\lambda_n \to \lambda \neq 0$, we have:
\begin{align*}
\begin{cases}
f_{l, \theta_n}^{n}(\lambda_n) \to f_{l, \theta}^{(\infty)}(\lambda)
\;\;\text{ and }\;\;
\partial_\lambda f_{l, \theta_n}^{n}(\lambda_n) \to \partial_\lambda f_{l, \theta}^{(\infty)}(\lambda),
\\
g_{l, \theta_n}^{n}(\lambda_n) \to g_{l, \theta}^{(\infty)}(\lambda)
\;\;\text{ and }\;\;
\partial_\lambda g_{l, \theta_n}^{n}(\lambda_n) \to \partial_\lambda g_{l, \theta}^{(\infty)}(\lambda).
\end{cases}
\end{align*}
\item \label{thm:toeplitz:inversetoplitz:hyp_int}
$
 \big|
\frac{1}{2\pi}
\int_{-\pi}^{\pi}
\prod_{j=1}^p
	f^{n}_{j,\theta}( \lambda)^{-1}
	g^{n}_{j,\theta}( \lambda) 
-
\prod_{j=1}^p
	f^{(\infty)}_{j,\theta}( \lambda)^{-1}
	g^{(\infty)}_{j,\theta}( \lambda)
d\lambda
 \big|
\to
0
$ uniformly over $\theta \in \Theta^*$.
\end{tight_enumerate}
Then $
 |
n^{-1} \Tr ( 
	\prod_{j=1}^p
	\Sigma_n( f_{j,\theta}^{n} )^{-1}
	\Sigma_n( g_{j,\theta}^{n} )
)
-
\frac{1}{2\pi}
\int_{-\pi}^{\pi}
\prod_{j=1}^p
	f^{(\infty)}_{j,\theta}( \lambda)^{-1}
	g^{(\infty)}_{j,\theta}( \lambda) 
d\lambda
 | \to
0
$ uniformly for $\Theta^*$.
\end{proposition}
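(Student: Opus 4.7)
The plan is to reduce the statement to Proposition \ref{propo:toeplitz:cohen} via a triangle inequality. Writing
\[
T_n(\theta) = \frac{1}{n} \Tr\bigg( \prod_{j=1}^p \Sigma_n(f^n_{j,\theta})^{-1} \Sigma_n(g^n_{j,\theta}) \bigg) \quad \text{and} \quad J^n(\theta) = \frac{1}{2\pi} \int_{-\pi}^\pi \prod_{j=1}^p (f^n_{j,\theta})^{-1} g^n_{j,\theta} \, d\lambda,
\]
and defining $J^\infty(\theta)$ analogously with $f^\infty, g^\infty$ in place of $f^n, g^n$, the target quantity is controlled by
\[
|T_n(\theta) - J^\infty(\theta)| \leq |T_n(\theta) - J^n(\theta)| + |J^n(\theta) - J^\infty(\theta)|.
\]
The second term vanishes uniformly in $\theta$ by hypothesis \ref{thm:toeplitz:inversetoplitz:hyp_int}, so the work reduces to establishing $\sup_{\theta \in \Theta^*} |T_n(\theta) - J^n(\theta)| \to 0$.

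To handle this residual, I would argue by a subsequence/contradiction strategy exploiting the compactness of $\Theta^*$. If the supremum failed to vanish, there would exist $\varepsilon > 0$, $n_k \to \infty$, and (after extracting) $\theta_k \to \theta_\star \in \Theta^*$ such that $|T_{n_k}(\theta_k) - J^{n_k}(\theta_k)| > \varepsilon$. The sequential continuity assumption \ref{thm:toeplitz:inversetoplitz:hyp_limit}, combined with the uniform polynomial bounds of hypothesis 3 on the symbols and their $\lambda$-derivatives, yields by a standard equicontinuity argument the uniform convergences $f^{n_k}_{l,\theta_k} \to f^\infty_{l,\theta_\star}$ and $g^{n_k}_{l,\theta_k} \to g^\infty_{l,\theta_\star}$ (together with their $\lambda$-derivatives) on every compact subset of $[-\pi,\pi] \setminus \{0\}$. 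In particular, dominated convergence using the envelope from hypotheses 3 and 4 gives $J^{n_k}(\theta_k) \to J^\infty(\theta_\star)$.

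It therefore suffices to prove $T_{n_k}(\theta_k) \to J^\infty(\theta_\star)$, which will contradict the lower bound $\varepsilon$. For this I would adapt the proof of Theorem 2.3 of \cite{cohen2011lan} (i.e. Proposition \ref{propo:toeplitz:cohen}) to the sequential family $\{(f^{n_k}_{l,\theta_k}, g^{n_k}_{l,\theta_k})\}_k$: its quantitative estimates only invoke the constants of hypothesis 3 (uniform in $n$ and $\theta$ by construction) and the uniform-on-compacts behaviour of the symbols (available in $k$ by the equicontinuity step above). The contribution from a neighbourhood of $\lambda = 0$ is controlled via the singular-integral condition $p(\beta - \alpha) < 1$ in hypothesis 4, while the contribution away from $0$ uses the uniform convergence just established. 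The hardest step will be ensuring that the operator-norm estimates on $\Sigma_n(f^n_{l,\theta})^{-1}$ underlying the argument of \cite{cohen2011lan} depend only on the hypothesis 3 constants and not on the particular symbol; granting this uniform control, a straightforward transcription of their argument yields $T_{n_k}(\theta_k) \to J^\infty(\theta_\star)$ and closes the contradiction.
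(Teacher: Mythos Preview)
Your triangle inequality decomposition and the use of hypothesis \ref{thm:toeplitz:inversetoplitz:hyp_int} for the second term are exactly what the paper does. The difference lies in how the first term $\sup_{\theta}|T_n(\theta)-J^n(\theta)|$ is handled.

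You propose a subsequence/contradiction argument that ultimately requires re-opening the proof of Proposition~\ref{propo:toeplitz:cohen} and checking that all its intermediate estimates are uniform in the symbol (you correctly flag the operator-norm bounds on $\Sigma_n(f^n)^{-1}$ as the delicate point). The paper instead observes that this labour is unnecessary: set $K=\{n^{-1}:n\ge 1\}\cup\{0\}$, which is compact in $\mathbb{R}$, and enlarge the parameter space to $\Theta=\Theta^*\times K\subset\mathbb{R}^{m+1}$, identifying the index $n$ with the extra parameter $n^{-1}$. Hypothesis~\ref{thm:toeplitz:inversetoplitz:hyp_limit} is precisely the statement that the maps $(\theta,n^{-1},\lambda)\mapsto f^{n}_{l,\theta}(\lambda)$, $g^{n}_{l,\theta}(\lambda)$ and their $\lambda$-derivatives are continuous on $\Theta\times([-\pi,\pi]\setminus\{0\})$, so all the assumptions of Proposition~\ref{propo:toeplitz:cohen} hold on $\Theta$. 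Applying that proposition as a black box yields
\[
\lim_{n\to\infty}\sup_{\theta\in\Theta^*}\sup_{m\in\mathbb{N}^*\cup\{\infty\}}\Big|\tfrac{1}{n}\Tr\Big(\textstyle\prod_j\Sigma_n(f^{(m)}_{j,\theta})^{-1}\Sigma_n(g^{(m)}_{j,\theta})\Big)-\tfrac{1}{2\pi}\int\textstyle\prod_j (f^{(m)}_{j,\theta})^{-1}g^{(m)}_{j,\theta}\,d\lambda\Big|=0,
\]
and specialising to $m=n$ gives $\sup_\theta|T_n(\theta)-J^n(\theta)|\to 0$ directly. Your route would work in principle, but the compactification trick avoids having to verify any uniform-in-symbol estimates by hand.
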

Note in addition that Assumptions \ref{thm:toeplitz:inversetoplitz:hyp_int} hold whenever for any $0 < \eta < \pi$, we have concurrently
$
 | 
	f_{l, \theta}^{n}(\lambda)  
	- 
	f_{l, \theta}^{(\infty)}(\lambda)  | \to 0
$ and $
 | 
	g_{l, \theta}^{n}(\lambda)  
	- 
	g_{l, \theta}^{(\infty)}(\lambda)  | \to 0
$ uniformly for $\theta \in \Theta^*$ and $|\lambda| \geq \eta$.

\begin{proof}
Let $K =  \{ n^{-1} , n \geq 1 \backslash \{ 0 \} )  \}\cup \{ 0 \} $ which is a compact subset of $\mathbb{R}$. For simplicity, we map directly $n$ with $n^{-1}$ in the following. Let now $\Theta = \Theta^* \times K$ which is still a compact subset of $\mathbb{R}^{m+1}$. Then Assumption \ref{thm:toeplitz:inversetoplitz:hyp_limit} in Proposition \ref{propo:toeplitz:inversetoplitz} ensures we can still apply Proposition \ref{propo:toeplitz:cohen} with $\Theta$ instead of $\Theta^*$. Then we have
\begin{align*}
\lim\limits_{n\to\infty}
\sup_{\theta \in \Theta^*}
\sup_{m \in \setN^* \cup \{ \infty \}}
 \bigg|
\frac{1}{n} \Tr \bigg( 
	\prod_{j=1}^p
	\Sigma_n\big( f_{j,\theta}^{(m)} \big)^{-1}
	\Sigma_n\big( g_{j,\theta}^{(m)} \big)
\bigg)
-
\frac{1}{2\pi}
\int_{-\pi}^{\pi}
\prod_{j=1}^p
	f_{j,\theta}^{(m)}( \lambda)^{-1}
	g_{j,\theta}^{(m)}( \lambda)
d\lambda
 \bigg|
=
0.
\end{align*}
Thus we obtain
\begin{align*}
&\sup_{\theta \in \Theta^*}
 \bigg|
\frac{1}{n} \Tr \bigg(
	\prod_{j=1}^p
	\Sigma_n( f_{j,\theta}^{n} )^{-1}
	\Sigma_n( g_{j,\theta}^{n} )
\bigg)
-
\frac{1}{2\pi}
\int_{-\pi}^{\pi}
\prod_{j=1}^p
	f^{(\infty)}_{j,\theta}( \lambda)^{-1}
	g^{(\infty)}_{j,\theta}( \lambda)
d\lambda
 \bigg|
\leq
\dots
\\
&\;\;\;\;\leq
\sup_{\theta \in \Theta^*}
 \bigg|
\frac{1}{n} \Tr \bigg(
	\prod_{j=1}^p
	\Sigma_n( f_{j,\theta}^{n} )^{-1}
	\Sigma_n( g_{j,\theta}^{n} )
\bigg)
-
\frac{1}{2\pi}
\int_{-\pi}^{\pi}
\prod_{j=1}^p
	f^{n}_{j,\theta}( \lambda)^{-1}
	g^{n}_{j,\theta}( \lambda)
d\lambda
 \bigg|
\\
&\;\;\;\;\;\;\;\;+
\sup_{\theta \in \Theta^*}
 \bigg|
\frac{1}{2\pi}
\int_{-\pi}^{\pi}
\prod_{j=1}^p
	f^{n}_{j,\theta}( \lambda)^{-1}
	g^{n}_{j,\theta}( \lambda)
d\lambda
-
\frac{1}{2\pi}
\int_{-\pi}^{\pi}
\prod_{j=1}^p
	f^{(\infty)}_{j,\theta}( \lambda)^{-1}
	g^{(\infty)}_{j,\theta}( \lambda)
d\lambda
 \bigg|
\end{align*}
which converges to $0$.
\end{proof}

\subsection{Proof of Lemma \ref{lemma:lan:sketch}}

\paragraph*{Proof of Lemma \ref{lemma:lan:sketch:1}}

We want to prove $\transp{\varphi_n} 
\nabla \salL_n ( \theta_0 ) \to \gaussian{0}{4I}$ under $P^n_{\theta_0}$. Let $(u,v) \in \mathbb{R}^2$. We show first
\begin{align*}
\zeta_n := \zeta_n(u,v) =
\begin{pmatrix}
u&v
\end{pmatrix}
\transp{\varphi_n} 
\nabla \salL_n ( \theta_0 ) \to \gaussian{0}{{J}_{u,v}}
\end{align*}
under $P^n_{\theta_0}$ which proves $\transp{\varphi_n} 
\nabla \salL_n ( \theta_0 ) \to \gaussian{0}{4I}$ and identification of ${J}_{u,v}$ gives explicitly $I$. First, notice that
\begin{align*}
\varphi_n
\begin{pmatrix}
u\\v
\end{pmatrix}
=
\frac{1}{\sqrt{n}}
\begin{pmatrix}
\widetilde{\alpha_n}
\\
\widetilde{\beta_n}
\end{pmatrix}
\text{ with } 
\begin{cases}
\widetilde{\alpha_n} = u \alpha_n + v \overline{\alpha_n},
\\
\widetilde{\beta_n} = u \beta_n + v \overline{\beta_n}.
\end{cases}
\end{align*}
Recall in addition that for any $\theta$, $\salL_n(\theta)$ is given by
\begin{align*}
\salL_n( \theta ) = \transp{Y^{n}}\Sigma_n^{-1}\big( f^{n}_\theta\big) Y^{n} + \log \det \big( \Sigma_n\big(f^{n}_\theta\big)\big).
\end{align*}
Thus we can compute explicitly the derivatives of $\salL_n$. If $\partial$ stands either for $\partial_H$ or $\partial_\sigma$, we have
\begin{align*}
\partial \salL_n ( \theta )
=
-
\transp{Y^{n}}
\Sigma_n^{-1} \big( f^{n}_\theta\big)
\Sigma_n\big( \partial f^{n}_\theta\big) 
\Sigma_n^{-1}
Y^{n}
+ \Tr ( 
\Sigma_n^{-1} \big( f^{n}_\theta\big)
\Sigma_n\big( \partial f^{n}_\theta\big) 
).
\end{align*}
Therefore, we get
\begin{align*}
\zeta_n(u,v)
&=
n^{-1/2}
\Tr \big(
\Sigma_n^{-1} \big( f^{n}_{\theta_0}\big)
\Sigma_n\big( \widetilde{\alpha_n} \partial_H f^{n}_{\theta_0} + \widetilde{\beta_n} \partial_\sigma f^{n}_{\theta_0} \big)
\big)
\\
&\;\;\;\;-
n^{-1/2}
\transp{Y^{n}}
\Sigma_n^{-1} \big( f^{n}_{\theta_0}\big)
\Sigma_n\big( \widetilde{\alpha_n} \partial_H f^{n}_{\theta_0} + \widetilde{\beta_n} \partial_\sigma f^{n}_{\theta_0}\big)
\Sigma_n^{-1}
Y^{n}.
\end{align*}

As $Y^{n}$ is a centered Gaussian with covariance $\Sigma_n \big(f_{\theta_0}^{n}\big)$, we can compute the cumulants of $\zeta_n(u,v)$ using Lemma \ref{lemma:moments_quad_form}. This variable is centered and its cumulants are given for $k \geq 2$ by
\begin{align*}
\kappa_k ( \zeta_n(u,v) )
=
n^{-\frac{k}{2}}
\Tr \big[ \big( 
\Sigma_n^{-1} \big( f^{n}_{\theta_0}\big)
\Sigma_n\big( \widetilde{\alpha_n} \partial_H f^{n}_{\theta_0} + \widetilde{\beta_n} \partial_\sigma f^{n}_{\theta_0} \big) \big)^k
\big].
\end{align*}
Since $f_{H, \sigma}^{n} = \sigma^2 n^{-2\gamma(H)} f_H + \tau_n^2l$ by Equation \eqref{eq:decaying:psd}, we have
\begin{align*}
\begin{cases}
f_{H, \sigma}^{n}(\lambda) = \sigma^2 n^{-2\gamma(H)} f_H(\lambda) + \tau_n^2 l(\lambda),
\\
\partial_H f_{H, \sigma}^{n}(\lambda) = \sigma^2 n^{-2\gamma(H)} \partial_H f_H(\lambda) - 2\gamma'(H)\sigma^2\log( n ) n^{-2\gamma(H)} f_H(\lambda) ,
\\
\partial_\sigma f_{H, \sigma}^{n}(\lambda) = 2\sigma n^{-2\gamma(H)} f_H(\lambda).
\end{cases}
\end{align*}
Thus we rewrite $\widetilde{\alpha_n} \partial_H f_{\theta_0} + \widetilde{\beta_n} \partial_\sigma f_{\theta_0}$ as follows:
\begin{align*}
\widetilde{\alpha_n} \partial_H f_{\theta_0} + \widetilde{\beta_n} \partial_\sigma f_{\theta_0}
=
{n^{-2\gamma(H_0)}}
\Big(
\widetilde{\alpha_n} \sigma_0^2 \partial_H f_{H_0}(\lambda) + 2 \sigma_0^2 ( - \widetilde{\alpha_n}\gamma'(H_0)\log n   +  \widetilde{\beta_n}\sigma_0^{-1}) f_{H_0}(\lambda)
\Big).
\end{align*}
We write $h_{\theta_0}^{n} = \widetilde{\alpha_n} \sigma_0^2 \partial_H f_{H_0} + 2 \sigma_0^2 ( - \widetilde{\alpha_n}\gamma'(H_0)\log n    +   \widetilde{\beta_n}\sigma_0^{-1} ) f_{H_0}$. By the assumptions in Theorem \ref{thm:decaying:LAN}, $h_{\theta_0}^{n} \to h_{\theta_0}^{(\infty)} := \widetilde{\alpha} \sigma_0^2 \partial_H f_{H_0} + 2 \sigma_0^2 \widetilde{\gamma} f_{H_0}$ on compact subsets of $[-\pi, 0) \cup (0, \pi]$ where $\widetilde{\gamma} = u \widetilde{\alpha} + v \widetilde{\beta}$. Moreover, $n^{2\gamma(H_0)} f^{n} \to \sigma_0^2 f_{H_0}$ uniformly on $[-\pi, \pi]$ so we eventually get, using Proposition \ref{propo:toeplitz:inversetoplitz}
\begin{align*}
\kappa_k \big( \zeta_n(u,v) \big)
&
=
2^{k-1} (k-1)!
n^{-\frac{k}{2}}\Tr \big[\big(
\Sigma_n^{-1} \big( n^{2\gamma(H_0)} f^{n}_{\theta_0}\big)
\Sigma_n\big( h_{\theta_0}^{(\infty)}(\lambda) \big) \big)^k
\big]
\\
&\to
\bigg(
\frac{1}{\pi}
\int_{-\pi}^\pi
\Big(
\sigma_0^{-2} f_{H_0}(\lambda)^{-1} h_{\theta_0}^{(\infty)}(\lambda) 
\Big)^2
d\lambda
\bigg)\delta_{k,2}
\end{align*}
where $\delta_{k,2} = 1$ if $k=2$ and $0$ otherwise. We recognise here the cumulants of a Gaussian variable so we conclude by the methods of moments (see for instance \cite{billingsley2008probability}).
We also identify $J_{u,v}$:
\begin{align*}
J_{u,v}
&=
\frac{1}{\pi}
\int_{-\pi}^\pi
\Big(
	u
	\frac
		{\alpha \partial_H f_{H_0} (\lambda) + 2 \gamma f_{H_0} (\lambda) }
		{f_{H_0}(\lambda) }
+
	v
	\frac
		{\overline{\alpha} \partial_H f_{H_0} (\lambda) + 2 \overline{\gamma} f_{H_0} (\lambda) }
		{f_{H_0}(\lambda) }
\Big)^2
d\lambda
\end{align*}
and we eventually express $I$ in terms of $J$:
$I_{1,1} = \frac{1}{4} J_{1,0}$, $
I_{1,2} = \frac{1}{8} ( J_{1,1} - J_{1,0} - J_{0,1} )$ and $
I_{2,2} = \frac{1}{4} J_{0,1}$.

\paragraph*{Proof of Lemma \ref{lemma:lan:sketch:2}}

We now prove that $\transp{\varphi_n} 
\nabla^2 \salL_n ( \theta_0 )
\varphi_n \to I$ in $P^n_{\theta_0}$-distribution. Indeed, we can compute the second order derivative of $\salL_n $. For conciseness, $\Sigma_n^{-1}$ will refer to $\Sigma_n^{-1}\big( f_{\theta}^{n} \big)$ and $f_\theta$ to $f_\theta^{n} $ in the following expression only. Let $i$ being either $H$ or $\sigma$ and $j$ also being either $H$ or $\sigma$. We have
\begin{align*}
\partial^2_{ij} \salL_n ( \theta )
&=
\transp{Y}
\Sigma_n^{-1}
\Sigma_n\big( \partial_i f_\theta\big) 
\Sigma_n^{-1}
\Sigma_n\big( \partial_j f_\theta\big) 
\Sigma_n^{-1}
Y
+
\transp{Y}
\Sigma_n^{-1}
\Sigma_n\big( \partial_j f_\theta\big) 
\Sigma_n^{-1}
\Sigma_n\big( \partial_i f_\theta\big) 
\Sigma_n^{-1}
Y
\\&\;\;\;\;
-
\transp{Y}
\Sigma_n^{-1}
\Sigma_n\big( \partial^2_{ij} f_\theta\big) 
\Sigma_n^{-1}
Y
-
\Tr ( 
\Sigma_n^{-1}
\Sigma_n\big( \partial_i f_\theta\big) 
\Sigma_n^{-1}
\Sigma_n\big( \partial_j f_\theta\big) 
)
+ 
\Tr ( 
\Sigma_n^{-1}
\Sigma_n\big( \partial^2_{ij} f_\theta\big) 
).
\end{align*}
Moreover, $\transp{\varphi_n} 
\nabla^2 \salL_n ( \theta_0 )
\varphi_n $ is composed of the three following terms:
\begin{align*}
\begin{cases}
A_n = n^{-1} \big(
	\alpha_n^2 \partial^2_{HH} \salL_n
	+
	2\alpha_n\beta_n \partial^2_{H\sigma} \salL_n
	+
	\beta_n^2 \partial^2_{\sigma\sigma} \salL_n
\big),
\\
B_n = n^{-1} \big(
	\alpha_n\overline{\alpha_n} \partial^2_{HH} \salL_n
	+
	(\alpha_n\overline{\beta_n} + \overline{\alpha_n}\beta_n) \partial^2_{H\sigma} \salL_n
	+
	\beta_n\overline{\beta_n} \partial^2_{\sigma\sigma} \salL_n
\big),
\\
C_n = n^{-1} \big(
	\overline{\alpha_n}^2 \partial^2_{HH} \salL_n
	+
	2\overline{\alpha_n}\overline{\beta_n} \partial^2_{H\sigma} \salL_n
	+
	\overline{\beta_n}^2 \partial^2_{\sigma\sigma} \salL_n
\big).
\end{cases}
\end{align*}

We shall consider the convergence in distribution of these terms as $n \to \infty$. As the limit is expected to be constant (it should converge towards the coefficients of $I$), so we can study these terms separately. Let us focus for instance on the first one. We should consider and deal with the two others similarly. We write
$A_n = n^{-1}
\big(
\transp{Y}^{n}
\Sigma_n^{-1}
\big( f_{\theta_0}^{n}\big)
\Lambda_n
Y^{n}
+ \Tr \big( \Gamma_n \big)
\big)
$
where $\Lambda_n$ and $\Gamma_n$ are the matrices given by
\begin{align*}
\begin{cases}
\Lambda_n = 2
\big(
\Sigma_n\big( \alpha_n
\partial_H f_{\theta_0}^{n}
+
\beta_n
\partial_\sigma f_{\theta_0}^{n}\big) 
\Sigma_n^{-1}
\big)^2
-
\Sigma_n\big( 
	\alpha_n^2
	\partial^2_{HH} f_{\theta_0}^{n}
	+
	2 \alpha_n\beta_n
	\partial^2_{\sigma H} f_{\theta_0}^{n}
	+
	\beta_n^2
	\partial^2_{\sigma\sigma} f_{\theta_0}^{n}
\big) 
\Sigma_n^{-1},
\\
\Gamma_n = 
-
\big(
\Sigma_n\big( \alpha_n
\partial_H f_{\theta_0}^{n}
+
\beta_n
\partial_\sigma f_{\theta_0}^{n}\big) 
\Sigma_n^{-1} 
\big)^2
+\Sigma_n\big( 
	\alpha_n^2
	\partial^2_{HH} f_{\theta_0}^{n}
	+
	2 \alpha_n\beta_n
	\partial^2_{\sigma H} f_{\theta_0}^{n}
	+
	\beta_n^2
	\partial^2_{\sigma\sigma} f_{\theta_0}^{n}
\big) 
\Sigma_n^{-1}
\end{cases}
\end{align*}
and where $\Sigma_n^{-1}$ stands for $\Sigma_n^{-1}\big( f_{\theta_0}^{n} \big)$.
Since $Y^{n}$ is a centered Gaussian with covariance $\Sigma_n\big( f_{\theta_0}^{n} \big)$ under $P^n_{\theta_0}$, we deduce from the results of Lemma \ref{lemma:moments_quad_form} that:
\begin{align*}
\begin{cases}
\EX{A_n} = n^{-1} \Tr ( \Lambda_n + \Gamma_n ) = n^{-1} \Tr\Big[ \big(
\Sigma_n\big( \alpha_n
\partial_H f_{\theta_0}^{n}
+
\beta_n
\partial_\sigma f_{\theta_0}^{n}\big)
\Sigma_n^{-1} \big( f_{\theta_0}^{n} \big)
\big)^2\Big],
\\
\Var{A_n} = \frac{1}{n^2} \Tr \big( \Lambda_n^2 \big).
\end{cases}
\end{align*}
We conclude with Proposition \ref{propo:toeplitz:inversetoplitz} as in the first step. We deal with the two other terms similarly.

\paragraph*{Proof of Lemma \ref{lemma:lan:sketch:3}}

We still need to prove that for any $i,j,k$ corresponding either to $H$ or to $\sigma$,
$\sup_{0 \leq s \leq 1}
 |
\partial^3_{ijk}
\salL_n (\theta_0 + s\varphi_n u)
( \varphi_n u )_i
( \varphi_n u )_j
( \varphi_n u )_k
 |
\to 0
$ under $P^n_{\theta_0}$. Firstly notice that there exists a constant $C$ depending only on the choice of the sequences $( \alpha_n )_n$, $( \overline{\alpha_n} )_n$, $( \beta_n )_n$ and $( \overline{\beta_n} )_n$ but not on $n$ such that $\norm{\varphi_n u} \leq C \log (n) n^{-1} \norm{u}$. Thus we only need to prove that in distribution, under $P^n_{\theta_0}$, we have $\log^3 (n)n^{-3}
\sup_{0 \leq s \leq 1}
 |
\partial^3_{ijk}
\salL_n (\theta_0 + s\varphi_n u)
 |
\to 0
$.

For simplicity, we focus here on the term $i=j=k=H$ here, though the other terms are similar. Then $\partial^3_{HHH} \salL_n ( \theta )$ is expressed as
\begin{align*}
\partial^3_{HHH} \salL_n ( \theta )
&=
3
\transp{Y}
\Sigma_n^{-1}
\Sigma_n( \partial_H f_\theta) 
\Sigma_n^{-1}
\Sigma_n( \partial^2_{HH} f_\theta) 
\Sigma_n^{-1}
Y
+
3
\transp{Y}
\Sigma_n^{-1}
\Sigma_n( \partial^2_{HH} f_\theta) 
\Sigma_n^{-1}
\Sigma_n( \partial_H f_\theta) 
\Sigma_n^{-1}
Y
\\&\;\;\;\;
- 6
\transp{Y}
\Sigma_n^{-1}
(
\Sigma_n( \partial_H f_\theta) 
\Sigma_n^{-1}
)^3
Y
-
\transp{Y}
\Sigma_n^{-1}
\Sigma_n( \partial^3_{HHH} f_\theta) 
\Sigma_n^{-1}
+
\Tr ( 
\Sigma_n^{-1}
\Sigma_n( \partial^3_{HHH} f_\theta) 
)
\\&\;\;\;\;
+2
\Tr 
(
( 
\Sigma_n^{-1}
\Sigma_n( \partial_H f_\theta) 
)^3
)
-2
\Tr ( 
\Sigma_n^{-1}
\Sigma_n( \partial_HH f_\theta) 
\Sigma_n^{-1}
\Sigma_n( \partial^2_{HH} f_\theta) 
).
\end{align*}

We deal with each term separately. Again for conciseness, we consider only the limits of the terms $\sup_{s}
 |
\transp{Y}^{n}
\Sigma_n^{-1}
(
\Sigma_n( \partial_H f^{n}_{\theta_0 + s\varphi_n u}) 
\Sigma_n^{-1}
)^3
Y^{n}
 |$ 
and 
$\sup_{s}
 |
\Tr 
(
( 
\Sigma_n^{-1}
\Sigma_n( \partial_H f^{n}_{\theta_0 + s\varphi_n u}) 
)^3
)
 |$, although the five other terms appearing in this sum should be considered and controlled with the same tools. These two terms are studied in the following lemmas and their proof concludes the proof of Lemma \ref{lemma:lan:sketch:3}.

\begin{lemma}
\label{lemma:LAN:partial3H}
For $r > 0$ small enough, $
\log^3 (n)n^{-3}
\sup_{\theta \in B(\theta_0, r)}
 |
\transp{Y}^{n}
\Sigma_n^{-1}
(
\Sigma_n( \partial_H f_{\theta}^{n}) 
\Sigma_n^{-1}
)^3
Y^{n}
 |\to 0
$ in $P^n_{\theta_0}$-probability where $B(\theta_0, r)$ is the closed ball centered at $\theta_0$ with radius $r$. 
\end{lemma}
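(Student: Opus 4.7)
The plan is to compute the moments of the quadratic form
$Q_n(\theta) := \transp{Y}^n A_n(\theta) Y^n$
with $A_n(\theta) = \Sigma_n^{-1}(f^n_\theta) (\Sigma_n(\partial_H f^n_\theta) \Sigma_n^{-1}(f^n_\theta))^3$
via Lemma \ref{lemma:moments_quad_form} and Proposition \ref{propo:toeplitz:inversetoplitz}, show that $Q_n(\theta) = O_{P^n_{\theta_0}}(n \log^3 n)$ at each fixed $\theta$, and finally upgrade to a uniform bound over $B(\theta_0, r)$ via a standard covering argument. Since the target bound $o_{P^n_{\theta_0}}(n^3/\log^3 n)$ is very loose compared to this actual order, a crude uniformity argument suffices.

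Under $P^n_{\theta_0}$, $Y^n$ is centered Gaussian with covariance $\Sigma_n(f^n_{\theta_0})$, so Lemma \ref{lemma:moments_quad_form} gives $\mathbb{E}_{\theta_0}[Q_n(\theta)] = \Tr(M_n(\theta))$ and $\Var_{\theta_0}(Q_n(\theta)) = 2 \Tr(M_n(\theta)^2)$, where $M_n(\theta) = A_n(\theta) \Sigma_n(f^n_{\theta_0})$ decomposes as the product of four pairs $\Sigma_n^{-1}(f^n_\theta) \Sigma_n(g_l)$ with $g_1 = g_2 = g_3 = \partial_H f^n_\theta$ and $g_4 = f^n_{\theta_0}$. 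Applying Proposition \ref{propo:toeplitz:inversetoplitz} with $p = 4$ for the expectation and $p = 8$ for the variance yields, after dividing by $n$, uniform convergence for $\theta \in B(\theta_0, r)$ towards explicit integrals. The key observation is that the $\log n$ term in $\partial_H f^n_{H,\sigma}(\lambda) = \sigma^2 n^{-2\gamma(H)} [\partial_H f_H(\lambda) - 2\gamma'(H) \log n \cdot f_H(\lambda)]$ makes the ratio $\partial_H f^n_\theta / f^n_\theta$ pointwise of order $\log n$, leading to
\begin{align*}
\mathbb{E}_{\theta_0}[Q_n(\theta)] = O(n \log^3 n)
\quad \text{and} \quad
\Var_{\theta_0}(Q_n(\theta)) = O(n \log^6 n)
\end{align*}
uniformly in $\theta$. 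Markov's inequality then gives $Q_n(\theta) = O_{P^n_{\theta_0}}(n \log^3 n)$ pointwise.

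To obtain the supremum, I would introduce a grid $(\theta_j)_{j=1}^N$ of mesh $n^{-K}$ in $B(\theta_0, r)$ for $K$ large enough (with $N = O(n^{2K})$). Markov's inequality applied to high-order cumulants of $Q_n(\theta_j)$ (all finite and controlled analogously via Proposition \ref{propo:toeplitz_FT_asymptotic}) together with the union bound give $\max_j |Q_n(\theta_j)| = O_{P^n_{\theta_0}}(n \log^{C} n)$ for some $C > 0$. The oscillation between grid points is handled crudely by $|Q_n(\theta) - Q_n(\theta_j)| \leq \|\theta - \theta_j\| \sup_\theta \|\partial_\theta A_n(\theta)\|_{\mathrm{op}} \|Y^n\|^2$, with both the operator norm of $\partial_\theta A_n(\theta)$ and $\|Y^n\|^2$ bounded by polynomial powers of $n$ using Assumption \ref{assumption:decaying:psd}. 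For $K$ large enough this oscillation is negligible compared to $n^3/\log^3 n$, which yields the claim after multiplying by $\log^3(n)/n^3$.

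The main technical obstacle is the careful verification of the hypotheses of Proposition \ref{propo:toeplitz:inversetoplitz}, specifically the integrability condition \ref{thm:toeplitz:cohen:hyp4} requiring $p(\beta(\theta) - \alpha(\theta)) < 1$: here $\beta$ plays the role of the exponent of $\partial_H f^n_\theta$ which by Assumption \ref{assumption:decaying:psd:bounds_derf} can be taken to be $\alpha(\theta) + \epsilon$ for arbitrarily small $\epsilon > 0$. Choosing $r$ small enough (so that $\alpha(\theta)$ remains close to $\alpha(\theta_0)$) and $\epsilon$ small enough ensures this condition holds uniformly for $\theta \in B(\theta_0, r)$. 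The remaining terms appearing in the expansion of $\partial^3_{HHH} \salL_n(\theta)$ from the proof of Lemma \ref{lemma:lan:sketch:3} are handled by entirely analogous computations.
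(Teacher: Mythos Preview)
Your approach via moment bounds and a covering argument is genuinely different from the paper's, and while the pointwise moment computation is essentially correct (after the $\log n$ rescaling you allude to, and after accounting for an extra factor $n^{2(\gamma(H)-\gamma(H_0))}=n^{O(r)}$ coming from the mismatch between $f^{n}_\theta$ and $f^{n}_{\theta_0}$), the uniformity step has a real gap. Your oscillation bound relies on $\|\partial_\theta A_n(\theta)\|_{\mathrm{op}}$ being polynomial in $n$; this in turn requires a polynomial bound on $\|\Sigma_n^{-1}(f^n_\theta)\|_{\mathrm{op}}$. When $\alpha(H)<0$ the symbol $f^n_\theta$ vanishes at $0$, so $\essinf f^n_\theta=0$ and Assumption~\ref{assumption:decaying:psd} alone does not yield any lower bound on $\lambda_{\min}(\Sigma_n(f^n_\theta))$. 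One needs an additional spectral input, and the natural one here is precisely Lemma~\ref{lemma:LAN:uniform_operator_control} (taken from \cite{brouste2018lan}), which controls $\|\Sigma_n(f)^{-1/2}\Sigma_n(g)^{1/2}\|_{2,n}$ directly.

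Once that lemma is available, the paper's route is considerably shorter and bypasses both the moment computation and the covering entirely. One writes
\[
|\transp{Y}^{n} A_n(\theta) Y^{n}|
\;\leq\;
\bignorm{\Sigma_n^{-1/2}(f^{n}_{\theta_0})Y^{n}}^2\,
\bignorm{\Sigma_n^{1/2}(f^{n}_{\theta_0})\,A_n(\theta)\,\Sigma_n^{1/2}(f^{n}_{\theta_0})}_{2,n},
\]
observes that the first factor is exactly $\chi^2_n$ (hence $O_P(n)$), and decomposes the second factor into a product of terms of the form $\|\Sigma_n^{-1/2}(f^{n}_\theta)\Sigma_n^{1/2}(|\partial_H f^{n}_\theta|)\|_{2,n}$ and $\|\Sigma_n^{1/2}(f^{n}_{\theta_0})\Sigma_n^{-1/2}(f^{n}_\theta)\|_{2,n}$. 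Lemma~\ref{lemma:LAN:uniform_operator_control} bounds each such term by $K n^{\varepsilon}$ uniformly over $\theta\in B(\theta_0,r)$ for $r$ small, giving $\sup_\theta|\transp{Y}^{n} A_n(\theta) Y^{n}|\leq K n^{1+10\varepsilon}$ in probability, with no grid and no high moments. Your covering/union-bound machinery ultimately needs the same operator-norm lemma to close the oscillation estimate, so it buys nothing here; the paper's argument is the more economical path.
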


\begin{lemma}
\label{lemma:LAN:Tr3H}
$\log^3 (n)n^{-3}
\sup_{0 \leq s \leq 1}
 |
\Tr 
(
( 
\Sigma_n^{-1}
\Sigma_n( \partial_H f_{\theta_0 + s\varphi_n u}) 
)^3
)
 |
\to 0
$
in $P^n_{\theta_0}$-probability.
\end{lemma}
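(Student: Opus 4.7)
The strategy is to extract the explicit $\log n$ factors arising from the $H$-derivative of $f_\theta^n$ and then apply Proposition \ref{propo:toeplitz:inversetoplitz} to the residual trace, which will be shown to be $O(n)$; multiplied by the prefactor $\log^3(n)\, n^{-3}$, the full quantity is then $O(\log^6(n)/n^2) = o(1)$.

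First, I would normalize. From $f_\theta^n = \sigma^2 n^{-2\gamma(H)} f_H + \tau_n^2 l$ one computes
$$
\partial_H f_\theta^n(\lambda) = \sigma^2 n^{-2\gamma(H)}\partial_H f_H(\lambda) - 2\gamma'(H)\sigma^2 n^{-2\gamma(H)}\log(n) f_H(\lambda),
$$
so that defining $\hat g_\theta^n := (\log n)^{-1} \partial_H f_\theta^n$ gives a family bounded, uniformly in $n$, by $C|\lambda|^{-\alpha(H)-r_0}$ on $[-\pi,\pi]\setminus\{0\}$ for any $r_0 > 0$ (by Assumptions \ref{assumption:decaying:psd:bounds} and \ref{assumption:decaying:psd:bounds_derf}). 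Since $\tau_n^2 n^{2\gamma(H_0)}$ is bounded (a consequence of the hypothesis that $(n^{\gamma(H_+)}\tau_n)_n$ is bounded and $\gamma$ is increasing), one has
$$ c|\lambda|^{-\alpha(H)} \leq n^{2\gamma(H_0)} f_\theta^n(\lambda) \leq C|\lambda|^{-\alpha(H)} $$
uniformly for $n \geq 1$ and $\theta$ in any fixed compact neighbourhood $\Theta^* = \overline{B(\theta_0, r)}$ of $\theta_0$. Because scalar prefactors commute with the Toeplitz operator $\Sigma_n$, the trace of interest factors as
$$
\Tr\Big((\Sigma_n^{-1}(f_\theta^n) \Sigma_n(\partial_H f_\theta^n))^3\Big) = (\log n)^3\, \Tr\Big((\Sigma_n^{-1}(f_\theta^n) \Sigma_n(\hat g_\theta^n))^3\Big).
$$

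Second, I would apply Proposition \ref{propo:toeplitz:inversetoplitz} with $p=3$, singularity exponent $\alpha(\theta) := \alpha(H)$ for $f_\theta^n$, and $\beta(\theta) := \alpha(H) + r_0$ for $\hat g_\theta^n$ with $r_0 < 1/3$ so that $p(\beta-\alpha) < 1$. The $\partial_\lambda$ regularity hypotheses follow from Assumption \ref{assumption:decaying:psd:bounds_derf} (for $\ell = 1,2$), continuity in $(\theta,\lambda)$ from Assumption \ref{assumption:decaying:psd:regularity}, and the only $n$-dependent scalar appearing in these functions is $\tau_n^2 n^{2\gamma(H_0)} \in [0,C]$, which is compact; adjoining this parameter to $\Theta^*$ yields uniform-in-$n$ hypotheses. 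The proposition then gives
$$
\sup_{\theta \in \Theta^*}\Big|\tfrac{1}{n}\Tr\big((\Sigma_n^{-1}(f_\theta^n)\Sigma_n(\hat g_\theta^n))^3\big) - \tfrac{1}{2\pi}\int_{-\pi}^{\pi}(\hat g_\theta^n(\lambda)/f_\theta^n(\lambda))^3 d\lambda\Big| \longrightarrow 0,
$$
and the limiting integral is uniformly bounded (in $\theta$ and $n$) by the pointwise bounds above. In particular, $\sup_{\theta \in \Theta^*}|\Tr((\Sigma_n^{-1}(f_\theta^n)\Sigma_n(\hat g_\theta^n))^3)| = O(n)$.

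Finally, since $\|\varphi_n u\| = O(\log n/\sqrt{n}) \to 0$, for $n$ large every $\theta_0 + s\varphi_n u$ with $s \in [0,1]$ lies in $\Theta^*$, so combining the two previous steps yields
$$
\log^3(n)\, n^{-3}\sup_{s \in [0,1]}\Big|\Tr\big((\Sigma_n^{-1}\Sigma_n(\partial_H f_{\theta_0+s\varphi_n u}^n))^3\big)\Big| = O\!\left(\tfrac{\log^6 n}{n^2}\right) \longrightarrow 0,
$$
which gives the claimed convergence (in fact deterministically). The main technical difficulty is ensuring uniformity in $n$ in the Toeplitz asymptotics when $\tau_n^2 n^{2\gamma(H_0)}$ need not converge, which is circumvented by treating this prefactor as an extra bounded compact parameter; once this is done, the explicit factorization of $\log n$ drives the whole argument.
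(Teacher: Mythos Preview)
Your overall strategy matches the paper's (apply Proposition~\ref{propo:toeplitz:inversetoplitz} with $p=3$ after a power-of-$n$ normalization, over $\Theta^*=B(\theta_0,r)$), and your explicit extraction of the $\log n$ factor together with the compactification of the residual $n$-dependent scalar is exactly the right mechanism. There is, however, a concrete slip in the normalization you choose: you multiply by $n^{2\gamma(H_0)}$ and then claim $c|\lambda|^{-\alpha(H)}\leq n^{2\gamma(H_0)}f_\theta^n(\lambda)\leq C|\lambda|^{-\alpha(H)}$ uniformly over a \emph{fixed} ball $\overline{B(\theta_0,r)}$ and over all $n$. Expanding, $n^{2\gamma(H_0)}f_\theta^n=\sigma^2 n^{2(\gamma(H_0)-\gamma(H))}f_H+\tau_n^2 n^{2\gamma(H_0)}l$; for any $H\neq H_0$ in the ball the prefactor $n^{2(\gamma(H_0)-\gamma(H))}$ either diverges or vanishes as $n\to\infty$, so the two-sided bound cannot hold uniformly in $(n,\theta)$. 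Your subsequent assertion that ``the only $n$-dependent scalar is $\tau_n^2 n^{2\gamma(H_0)}$'' is therefore also false, and the compactification step does not apply as you stated it.

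The repair is to normalize by the $\theta$-dependent factor $n^{2\gamma(H)}$ (which is what the paper does), harmless because these scalars cancel in the product $\Sigma_n^{-1}(\cdot)\Sigma_n(\cdot)$. One then gets $n^{2\gamma(H)}f_\theta^n=\sigma^2 f_H+\tau_n^2 n^{2\gamma(H)}l$ and $n^{2\gamma(H)}\hat g_\theta^n=(\log n)^{-1}\sigma^2\partial_H f_H-2\gamma'(H)\sigma^2 f_H$; the only remaining $n$-dependence is through the scalars $\mu:=\tau_n^2 n^{2\gamma(H)}\in[0,\sup_m\tau_m^2 m^{2\gamma(H_+)}]$ and $\eta:=(\log n)^{-1}\in(0,1]$. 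Adjoining $(\mu,\eta)$ to $\Theta^*$ and applying Proposition~\ref{propo:toeplitz:cohen} directly to the $n$-independent families $\sigma^2 f_H+\mu l$ and $\eta\,\sigma^2\partial_H f_H-2\gamma'(H)\sigma^2 f_H$ yields the required uniform $O(n)$ trace bound, after which your final $O(\log^6(n)/n^2)$ conclusion stands unchanged.
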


Before we prove them, we introduce the spectral norm on Toeplitz matrices. Let $A$ be a $n \times n$ matrix. We write $\norm{A}_{2,n}$ its spectral norm. More precisely, it is defined by
\begin{align}
\label{eq:LAN:operatordef}
\norm{A}_{2,n} = \sup_{x \neq 0} \bigg( \frac{x^*A^*Ax}{x^*x}\bigg)^{1/2}
\end{align}
where $x$ is seen as a $d$-dimensional complex number and $x^*$ and $A^*$ refer to the conjugate of $x$ and $A$ respectively. This norm has interesting properties: if $A$ and $B$ are $n \times n$ matrices, then $\norm{AB}_{2,n} \leq \norm{A}_{2,n} \norm{B}_{2,n} $ and if $x\in\mathbb{R}^n$, then $\transp{x}Ax \leq \norm{A}_{2,n}\norm{x}^2$. Recall that $\Sigma_n(f) = ( \widehat{f}(i-j) )_{1 \leq i,j \leq n}$. When $f$ is even, $\Sigma_n(f)$ is positive and real so the supremum in  \eqref{eq:LAN:operatordef} can be taken with respect to real vectors and the transposition symbol can replace the conjugation one.

\begin{proof}[Proof of Lemma \ref{lemma:LAN:partial3H}]
Let 
$
H_n(\theta) =  |
\transp{Y}^{n}
\Sigma_n^{-1} (  f_{\theta}^{n} )
(
\Sigma_n( \partial_H f_{\theta}^{n}) 
\Sigma_n^{-1} (  f_{\theta}^{n} )
)^3
Y^{n}
 |
$
be defined 
for any $\theta = (H,\sigma) \in B(\theta_0, r)$, 
so that we have
\begin{align*}
H_n(\theta)
&=
\bignorm{\Sigma_n^{-1/2} \big(  f_{\theta_0}^{n} \big) Y^{n}}^2
\bignorm{
\Sigma_n^{1/2} \big(  f_{\theta_0}^{n} \big)
\Sigma_n^{-1} \big(  f_{\theta}^{n} \big)
\big(
\Sigma_n\big( \partial_H f_{\theta}^{n}\big) 
\Sigma_n^{-1} \big(  f_{\theta}^{n} \big)
\big)^3
\Sigma_n^{1/2} \big(  f_{\theta_0}^{n} \big)
}_{2,n}
\\
&\leq
\bignorm{\Sigma_n^{-1/2} \big(  f_{\theta_0}^{n} \big) Y^{n}}^2
\bignorm{
\Sigma_n^{-1/2} \big(  f_{\theta}^{n} \big)
\Sigma_n\big( \partial_H f_{\theta}^{n}\big) 
\Sigma_n^{-1/2} \big(  f_{\theta}^{n} \big)
}_{2,n}^3
\\&\;\;\;\;\times
\bignorm{
\Sigma_n^{1/2} \big(  f_{\theta_0}^{n} \big)
\Sigma_n^{-1/2} \big(  f_{\theta}^{n} \big)
}_{2,n}
\bignorm{
\Sigma_n^{-1/2} \big(  f_{\theta}^{n} \big)
\Sigma_n^{1/2} \big(  f_{\theta_0}^{n} \big)
}_{2,n}
\\
&\leq
\bignorm{\Sigma_n^{-1/2} \big(  f_{\theta_0}^{n} \big) Y^{n}}^2
\bignorm{
\Sigma_n^{-1/2} \big(  f_{\theta}^{n} \big)
\Sigma_n^{1/2} \big( \partial_H f_{\theta}^{n}\big) 
}_{2,n}^6
\bignorm{
\Sigma_n^{1/2} \big(  f_{\theta_0}^{n} \big)
\Sigma_n^{-1/2} \big(  f_{\theta}^{n} \big)
}_{2,n}^2.
\end{align*}
Moreover, we have
\begin{align*}
\bignorm{
\Sigma_n^{-1/2} \big(  f_{\theta}^{n} \big)
\Sigma_n^{1/2} \big( \partial_H f_{\theta}^{n}\big) 
}_{2,n}^2
&=
\sup_{x \neq 0} \frac{
	\transp{x}
	\Sigma_n^{-1/2} \big(  f_{\theta}^{n} \big)
	\Sigma_n \big( \partial_H f_{\theta}^{n}\big) 
	\Sigma_n^{-1/2} \big(  f_{\theta}^{n} \big)
x}{\transp{x}x}
=
\sup_{x \neq 0} \frac{
	\transp{x}
	\Sigma_n \big( \partial_H f_{\theta}^{n}\big) 
x}{\transp{x}	\Sigma_n \big(  f_{\theta}^{n} \big)
x}
\end{align*}
and for any $x \in \mathbb{R}^n$, we notice that
\begin{align*}
\transp{x}
	\Sigma_n \big( \partial_H f_{\theta}^{n}\big) 
x
&=
\frac{1}{2\pi}
\int_{-\pi}^\pi
\partial_H f_{\theta}^{n}(\lambda)  \Big| \sum_{k} e^{ik\lambda} x_k  \Big|^2
d\lambda
\\
&\leq 
\frac{1}{2\pi}
\int_{-\pi}^\pi
 \big| \partial_H f_{\theta}^{n}(\lambda)  \big|  \Big| \sum_{k} e^{ik\lambda} x_k  \Big|^2
d\lambda
=
\transp{x}
	\Sigma_n \big(  \big| \partial_H f_{\theta}^{n}  \big| x\big) 
x
\end{align*}
so that finally we get
\begin{align*}
\bignorm{
\Sigma_n^{-1/2} \big(  f_{\theta}^{n} \big)
\Sigma_n^{1/2} \big( \partial_H f_{\theta}^{n}\big) 
}_{2,n}^2
\leq 
\bignorm{
\Sigma_n^{-1/2} \big(  f_{\theta}^{n} \big)
\Sigma_n^{1/2} \big(   \big| \partial_H f_{\theta}^{n}   \big|\big) 
}_{2,n}^2.
\end{align*}
Let $\varepsilon > 0$. Provided $r$ is chosen small enough, Assumption \ref{assumption:decaying:psd} ensures that we have some constants $c_1$ and $c_2$ (that do not depend on $\theta$ or $\lambda$) such that for any $\theta \in B(\theta_0, r)$ and any $\lambda \neq 0$, we have
\begin{align*}
\begin{cases}
n^{2H}
 |f_{\theta}^{n}(\lambda) | \geq c_1  | \lambda  |^{-\alpha(H_0) + \varepsilon},
\\
n^{2H}
 |f_{\theta}^{n}(\lambda) | \leq c_2  | \lambda  |^{-\alpha(H_0) - \varepsilon},
\\
n^{2H}
 |\partial_H f_{\theta}^{n}(\lambda) | \leq c_2  | \lambda  |^{-\alpha(H_0) - 2\varepsilon}.
\end{cases}
\end{align*}
Thus we can apply the following lemma proven in \cite{brouste2018lan} (Lemma 2.1):
\begin{lemma}
\label{lemma:LAN:uniform_operator_control}
Let $f$ and $g$ be nonnegative symmetric functions defined on $[-\pi, \pi]$. Assume that there exist $c_1, c_2 > 0$ and $\beta_1, \beta_2 < 1$ such that 
$f(x) \geq c_1 |x|^{-\beta_1}$ and $g(x) \leq c_2 |x|^{-\beta_2}$. Then there exists a constant $K$ depending only on $c_1, c_2, \beta_1, \beta_2$ such that for any integer $n \geq 1$, we have
\begin{align*}
\bignorm{\Sigma_n(f)^{-1/2}\Sigma_n(g)^{1/2}}_{2,n}
=
\bignorm{\Sigma_n(g)^{1/2}\Sigma_n(f)^{-1/2}}_{2,n}
\leq 
K
n^{\max (\beta_2-\beta_1, 0 )/2}.
\end{align*}
\end{lemma}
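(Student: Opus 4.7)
The plan is to reduce this classical Toeplitz-operator bound to a sharp weighted inequality for trigonometric polynomials of degree at most $n$, and then to treat the cases $\beta_2 \leq \beta_1$ and $\beta_2 > \beta_1$ separately. First, since $f$ and $g$ are real and even, $\Sigma_n(f)$ and $\Sigma_n(g)$ are real symmetric, so the equality of the two operator norms follows from the general identity $\norme{A}_{2,n} = \norme{\transp{A}}_{2,n}$. Using cyclicity of the spectral radius together with the positivity of $\Sigma_n(f)$, the squared norm admits the variational description
\begin{align*}
\bignorm{\Sigma_n(f)^{-1/2}\Sigma_n(g)^{1/2}}_{2,n}^2
= \rho\big(\Sigma_n(f)^{-1}\Sigma_n(g)\big)
= \sup_{x \neq 0} \frac{\transp{x}\Sigma_n(g)x}{\transp{x}\Sigma_n(f)x}.
\end{align*}

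Next I would invoke the identity $\transp{x}\Sigma_n(h)x = \frac{1}{2\pi}\int_{-\pi}^\pi h(\lambda)|P_x(\lambda)|^2 d\lambda$ with $P_x(\lambda) = \sum_{k=1}^n x_k e^{ik\lambda}$, and apply the pointwise bounds on $f,g$ to replace the Toeplitz quadratic forms by integrals against pure power weights. The remaining task is then the polynomial inequality
\begin{align*}
\sup_{\deg P \leq n} \frac{\int_{-\pi}^\pi |\lambda|^{-\beta_2}|P(\lambda)|^2 d\lambda}{\int_{-\pi}^\pi |\lambda|^{-\beta_1}|P(\lambda)|^2 d\lambda} \leq K' n^{\max(\beta_2-\beta_1,0)}.
\end{align*}
The easy case $\beta_2 \leq \beta_1$ is pointwise, since $|\lambda|^{-\beta_2} \leq \pi^{\beta_1-\beta_2}|\lambda|^{-\beta_1}$ on $[-\pi,\pi]$ gives the bound with no $n$-dependence. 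In the substantive case $\beta_2 > \beta_1$, I would split both integrals at the frequency scale $|\lambda| = 1/n$: on the outer region $\{|\lambda| \geq 1/n\}$ the pointwise comparison $|\lambda|^{-\beta_2} \leq n^{\beta_2-\beta_1}|\lambda|^{-\beta_1}$ is immediate and transfers exactly the correct factor.

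The main obstacle is the low-frequency region $\{|\lambda| < 1/n\}$, where $|\lambda|^{-\beta_2}$ dominates $|\lambda|^{-\beta_1}$ by arbitrarily large factors, so that the argument cannot be reduced to a pointwise comparison of weights. The key input is a Bernstein--Remez type control: a trigonometric polynomial of degree at most $n$ cannot oscillate on scales finer than $1/n$, so that $|P(\lambda)|^2$ on $\{|\lambda| < 1/n\}$ is comparable to the average of $|P|^2$ on a slightly larger annulus $\{|\lambda| \asymp 1/n\}$, where $|\lambda|^{-\beta_1}$ is of size $n^{\beta_1}$ and can therefore be used to dominate the contribution of the numerator. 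Transferring the low-frequency mass of the numerator onto this annulus and comparing with the denominator yields exactly the factor $n^{\beta_2-\beta_1}$, and sharpness is witnessed by the Dirichlet kernel, for which the splitting above is tight up to constants. Taking a square root recovers the statement.
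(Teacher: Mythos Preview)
The paper does not prove this lemma; it is simply quoted as Lemma~2.1 of \cite{brouste2018lan}. Your reduction is the standard one and is correct up to the low-frequency step: the passage to the generalized Rayleigh quotient, the rewriting of the quadratic forms as weighted $L^2$-norms of the trigonometric polynomial $P_x$, the pointwise treatment of the case $\beta_2\le\beta_1$, and the outer region $\{|\lambda|\ge 1/n\}$ all go through exactly as you describe.

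The gap is in the low-frequency control. The assertion that $|P|^2$ on $\{|\lambda|<1/n\}$ is dominated by its average on a fixed annulus $\{|\lambda|\asymp 1/n\}$ is not a consequence of Bernstein's inequality and is in fact false. For $n$ odd, the degree-$n$ polynomial $P(\lambda)=e^{i\lambda}\big(e^{2i\lambda}-2\cos\theta\,e^{i\lambda}+1\big)^{(n-1)/2}$ with $\theta$ at the centre of the annulus $[1/n,2/n]$ has $|P|^2$ vanishing to order $n-1$ at $\pm\theta$, and a direct computation shows that $|P(0)|^2$ exceeds the annulus supremum of $|P|^2$ by a factor $(9/7)^{n-1}$. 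Bernstein bounds the oscillation speed of $P$ but does not prevent $P$ from being uniformly small on a prescribed interval of length $\asymp 1/n$.

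The fix is to compare the low-frequency piece of the numerator against the \emph{full} denominator rather than against an annulus. Writing $P(\lambda_0)=\tfrac{1}{2\pi}\int P(\lambda)\overline{D_n(\lambda-\lambda_0)}\,d\lambda$ with $D_n(\mu)=\sum_{k=1}^n e^{ik\mu}$ and applying Cauchy--Schwarz with weight $|\lambda|^{-\beta_1}$ gives $|P(\lambda_0)|^2\le C\,n^{1-\beta_1}\int_{-\pi}^\pi|\lambda|^{-\beta_1}|P(\lambda)|^2\,d\lambda$ uniformly for $|\lambda_0|\le 1/n$, since $\int|\lambda|^{\beta_1}|D_n(\lambda-\lambda_0)|^2\,d\lambda=O(n^{1-\beta_1})$ on that range (at least for $\beta_1>-1$, which covers the use made of the lemma in the paper). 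Multiplying by $\int_{|\lambda|<1/n}|\lambda|^{-\beta_2}\,d\lambda\asymp n^{\beta_2-1}$ recovers the factor $n^{\beta_2-\beta_1}$, after which your assembly of the two regions is correct.
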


Thus there exists a constant $K$ that does not depend on $n$ and $\theta$ such that for any $\theta \in B(\theta_0, r)$, 
$\bignorm{
\Sigma_n^{-1/2} (  f_{\theta}^{n} )
\Sigma_n^{1/2} (   | \partial_H f_{\theta}^{n}   |) 
}_{2,n}^2
\leq 
Kn^{3\varepsilon}
$ and $ \bignorm{
\Sigma_n^{1/2} (  f_{\theta_0}^{n} )
\Sigma_n^{-1/2} (  f_{\theta}^{n} )
}_{2,n}^2
\leq 
Kn^{\varepsilon}
$.
We obtain
\begin{align*}
H_n(\theta) &\leq 
\bignorm{\Sigma_n^{-1/2} \big(  f_{\theta_0}^{n} \big) Y^{n}}^2
\bignorm{
\Sigma_n^{-1/2} \big(  f_{\theta}^{n} \big) 
\Sigma_n^{1/2} \big( \partial_H f_{\theta}^{n}\big)  
}_{2,n}^6
\bignorm{
\Sigma_n^{1/2} \big(  f_{\theta_0}^{n} \big) 
\Sigma_n^{-1/2} \big(  f_{\theta}^{n} \big) 
}_{2,n}^2
\\&
\leq
Kn^{10\varepsilon}
\bignorm{\Sigma_n^{-1/2} \big(  f_{\theta_0}^{n} \big)  Y^{n}}^2.
\end{align*}

Finally, recall $Y^{n}$ is a centered Gaussian vector with covariance $\Sigma_n \big(  f_{\theta_0}^{n} \big) $ so $\bignorm{\Sigma_n^{-1/2} \big(  f_{\theta_0}^{n} \big)  Y^{n}}^2$ has a 
chi-squared distribution with $n$ degrees of freedom, so that $\bignorm{\Sigma_n^{-1/2} \big(  f_{\theta_0}^{n} \big)  Y^{n}}^2 \sim 2n$ under $\zproba^{n}_{\theta_0}$. This concludes the proof as we take $\varepsilon$ small enough so that the convergence to $0$ holds.

\end{proof}

\begin{proof}[Proof of Lemma \ref{lemma:LAN:Tr3H}]

We consider $\Theta^* = B(\theta_0, r)$. We aim at applying Proposition \ref{propo:toeplitz:inversetoplitz} with $p=3$ to the functions $n^{H} f^{n}_{\theta}$ and $n^{H} \partial_H f^{n}_{\theta}$. Proceeding as in the last subsection, we check that all assumptions of this proposition hold and therefore we can easily prove Lemma \ref{lemma:LAN:Tr3H}.

\end{proof}

\section{Proof of Theorem \ref{thm:slow:optimal}}

\subsection{Outline of the proof}

It relies heavily on the properties of the preaveraged data
$\widehat{Z}_i^{n} 
=
\sigma k^{-1}
\sum_{j=0}^{k-1} W_{\frac{ik+j}{n}}^{H} 
+ \tau k^{-1/2}\widehat{\xi}^n_i$ introduced in section \ref{subsec:slow:construction} to obtain a quickly decaying noise.  Recall that the increments $Y_i^{n} = \Delta \widehat{Z}_i^{n} = \widehat{Z}_{i+1}^{n} -\widehat{Z}_{i}^{n}$ are stationary 
and by scale invariance their spectral density is given by $f^{n}_{H,\sigma} 
=
\sigma^2( k/n )^{2H} f_{H, k} + \tau^2k^{-1} l$ where $f_{H, k}$ is the power spectral density of the stationary process 
$ k^{-1}
\sum_{j=0}^{k-1} ( W_{i+1+j/k}^{H} - W_{i+j/k}^{H})_i$  and where $l( \lambda ) = 2 (1 - \cos \lambda)$
We start this proof by retrieving some properties of this spectral density.

\begin{lemma}
\label{lemma:slow:preaveraged_psd}
Consider $k \geq 1$ and $W^H$ a fractional Brownian motion with Hurst index $H$.
Define $X^{(k)}_n :=
k^{-1}
\sum_{j=0}^{k-1}
W^H_{n + j/k}$ and $Y^{(k)} = \Delta X^{(k)}$ its (stationary) increments. We also define $Y^{(\infty)}_n
=
\int_n^{n+1} W^H_{s} - W^H_{s-1} \,ds
$ which is a stationary process as well. Then the following holds:
\begin{enumerate}[label=\alph*), ref=\concatenate{\ref{lemma:slow:preaveraged_psd}}{{\alph*)}}]
  \setlength{\itemsep}{0pt}
  \setlength{\parskip}{0pt}
\item
\label{lemma:slow:preaveraged_psd:explicit_psd} 
\label{lemma:slow:preaveraged_psd:explicit_psd2} 
The power spectral density of $Y^{(k)}$, denoted $f_{H,k}$ is given for any $0 < |\lambda| < \pi$ by
\begin{align*}
f_{H, k}(\lambda)
=
\Gamma(2H+1) \sin ( \pi H )
\sum_{j \in \mathbb{Z}}
\frac{( 1 - \cos \lambda )^2}{k^2 \sin ( \frac{\lambda}{2k} + \frac{\pi j}{k} )^2}
\cdot
\frac{1}{ | \lambda + 2\pi j  |^{1+2H}}.
\end{align*}

\item
\label{lemma:slow:preaveraged_psd:explicit_psd_lim} 
The power spectral density of $Y^{(\infty)}$, denoted $f_{H,\infty}$ is given for any $0 < |\lambda| < \pi$ by
\begin{align*}
f_{H, \infty}(\lambda)
=
4\Gamma(2H+1) \sin ( \pi H )
( 1 - \cos \lambda )^2
\sum_{j \in \mathbb{Z}}
\frac{1}{ | \lambda + 2\pi j  |^{3+2H}}.
\end{align*}

\item
\label{lemma:slow:preaveraged_psd:uniform_conv}
$ |
f_{H,k}(\lambda) - f_{H,\infty}(\lambda)
 |
\to 0$ uniformly for $ \in [H_-, H_+]$ and $\lambda \neq 0$. More precisely,  that there exists a constant $C$ independent of $H \in [H_-, H_+]$ and of $k \geq 2$ such that
\begin{align*}
\forall\, \lambda \neq 0, \;
 | 
	f_{H, k}(\lambda)
	-
	f_{H, \infty}(\lambda)
 |
&\leq
C
\frac{|\lambda|^{2 \wedge ( 3-2H )}}{k^{1 \wedge 2H}}.
\end{align*}

\item
\label{lemma:slow:preaveraged_psd:uniform_conv_derivatives}
Let $j = 1$ or $2$. Then $
 |
\partial_H^j  f_{H,k}(\lambda) - \partial_H^j  f_{H,\infty}(\lambda)
 |
\to 0$ uniformly for $ \in [H_-, H_+]$ and $\lambda \neq 0$. More precisely for any $r > 0$, There exists a constant $C = C(r)$ independent of $H$ and of $k \geq 2$ such that
\begin{align*}
\forall\, \lambda \neq 0, \;
 | 
	\partial_H^j f_{H, k}(\lambda)
	-
	\partial_H^j f_{H, \infty}(\lambda)
 |
&\leq
C
\frac{|\lambda|^{2 \wedge ( 3-2H ) - r} \ln^j k}{k^{1 \wedge 2H}}.
\end{align*}

\item
\label{lemma:slow:preaveraged_psd:control_diff_psd} 
There exist $c_1$ and $c_2$ independent of $H$ and of $k$ such that for any $H \in [H_-, H_+]$, any $k \in \setN^* \cup \{ \infty \}$ and any $0 < |\lambda| < \pi$, we have
\begin{align*}
c_1 |\lambda|^{1-2H} \leq | f_{H, k}(\lambda) | \leq c_2 |\lambda|^{1-2H}.
\end{align*}
Moreover, for any $r > 0$, there exists $c_3 = c_3(r)$ a constant depending only of $r$ such that for any $( H, \lambda ) \in \Theta_H \times [-\pi, \pi] \backslash \{ 0 \}$, $k \geq 2$, $j=0,1,2,3$ and $m=0,1,2$, we have
\begin{align*}
 \Big|
\frac{\partial^{j+m} f_{H, k}(\lambda)}{\partial \lambda^m \partial H^j} 
 \Big| \leq \frac{c_3(r)}{ | \lambda  |^{2H - 1 + m + r}}
.
\end{align*}

\end{enumerate}
\end{lemma}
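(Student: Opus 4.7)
The plan is to derive the explicit formulas in parts (a)--(b) from the harmonizable representation of fractional Brownian motion, and then obtain the uniform estimates in (c)--(e) from a partial-fraction identity that exhibits $f_{H,\infty}$ as a natural sub-series of $f_{H,k}$.

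For part (a), I start from the spectral representation $W^H_t = c_H \int_{\mathbb{R}} (e^{it\xi} - 1)(i\xi)^{-1} |\xi|^{1/2-H} \, dB(\xi)$ with $c_H^2 = \Gamma(2H+1)\sin(\pi H)$, where $dB$ is a suitably normalized complex Gaussian measure. Using the finite geometric sum $\frac{1}{k}\sum_{j=0}^{k-1} e^{ij\xi/k} = \frac{e^{i\xi}-1}{k(e^{i\xi/k}-1)}$ together with $Y^{(k)}_n = \frac{1}{k}\sum_{j=0}^{k-1}(W^H_{n+1+j/k} - W^H_{n+j/k})$ yields
\begin{align*}
Y^{(k)}_n = c_H \int e^{in\xi} \frac{(e^{i\xi}-1)^2}{i\xi \, |\xi|^{H-1/2} \, k(e^{i\xi/k}-1)} \, dB(\xi).
\end{align*}
The discrete-time spectral density then follows from the aliasing formula $f_{H,k}(\lambda) = \sum_{j\in\mathbb{Z}} |\psi(\lambda+2\pi j)|^2$ applied to the integrand $\psi$, combined with $|e^{i\xi}-1|^2 = 2(1-\cos\xi)$, $|e^{i\xi/k}-1|^2 = 4\sin^2(\xi/(2k))$, and the $2\pi$-periodicity of $\cos$. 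Part (b) is analogous: $Y^{(\infty)}_n$ admits an explicit stochastic integral obtained by integrating the spectral form of $\Delta W^H$ over $s\in[n,n+1]$, and aliasing gives the stated formula.

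For (c)--(e), the central idea is to apply the classical partial-fraction expansion $\sin^{-2}(z) = \sum_{n\in\mathbb{Z}} (z-n\pi)^{-2}$ at $z = \pi j/k + \lambda/(2k)$, which gives
\begin{align*}
\frac{1}{k^2 \sin^2(\pi j/k + \lambda/(2k))} = 4 \sum_{n\in\mathbb{Z}} \frac{1}{(\lambda + 2\pi(j-nk))^2}.
\end{align*}
Substituting this into the formula of (a) and reindexing via $m = j-nk$ yields the double series
\begin{align*}
f_{H,k}(\lambda) = 4\Gamma(2H+1)\sin(\pi H)(1-\cos\lambda)^2 \sum_{m,n\in\mathbb{Z}} \frac{1}{(\lambda+2\pi m)^2 \, |\lambda + 2\pi(m+nk)|^{1+2H}},
\end{align*}
in which the sub-series $n=0$ coincides with $f_{H,\infty}(\lambda)$. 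The residual sum over $n\neq 0$ gives the difference $f_{H,k}-f_{H,\infty}$ in (c): for $n\neq 0$ one has $|\lambda + 2\pi(m+nk)| \gtrsim k|n|$ when $|m| \leq k|n|/2$, while the complementary range is controlled by the decay of $(\lambda+2\pi m)^{-2}$. Combining these estimates with $(1-\cos\lambda)^2 \leq \min(\lambda^4/2, 4)$ produces the bound; the appearance of the exponent $2 \wedge (3-2H)$ reflects the transition between a regime where the small-$\lambda$ factor $(1-\cos\lambda)^2$ dominates and one where the singular behavior of the $m\neq 0$ terms takes over.

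Part (d) follows by differentiating the double series term-by-term; each $H$-derivative of $|\lambda+2\pi(m+nk)|^{-(1+2H)}$ contributes a factor $-2\log|\lambda+2\pi(m+nk)|$, which is at most of order $\log k$ on the relevant range, yielding the $\log^j k$ factor. Part (e) amounts to bounding the same series and its $\lambda$- and $H$-derivatives summand by summand, with the extra singular factors $|\lambda|^{-m}$ arising from differentiating the $m=0$ term. The main technical obstacle throughout is carrying out the splitting between small and large $|m|$ uniformly for $H \in [H_-, H_+]$ and $k \in \mathbb{N}^* \cup \{\infty\}$ while isolating the singular contribution of the term $(m,n)=(0,0)$, which carries the $|\lambda|^{1-2H}$ behavior near $\lambda=0$ and prevents any naïve absolute-convergence argument.
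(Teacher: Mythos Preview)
Your approach to part (a) matches the paper's: both use the harmonizable representation, compute the transfer function of $Y^{(k)}$, and recover the discrete spectral density by folding onto $[-\pi,\pi]$ (the paper phrases this as computing the autocovariance and splitting the integral over $2\pi$-shifts, which is exactly aliasing).

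For parts (c)--(e), however, your route is genuinely different from the paper's. The paper works directly with the difference
\[
\frac{4}{|\lambda+2\pi j|^{2}}-\frac{1}{k^{2}\sin^{2}\!\bigl(\tfrac{\lambda}{2k}+\tfrac{\pi j}{k}\bigr)},
\]
exploits only the elementary fact that $x^{-2}-\sin^{-2}x$ is bounded on $|x|\le 2\pi/3$, and handles the periodicity of $\sin$ by covering $\mathbb{Z}$ with shifted index sets $J_s^{(k)}(\lambda)=\{j:|(\lambda+2\pi j)/(2k)+s\pi|\le 2\pi/3\}$; the estimate then splits into four separate sums according to whether $s=0$ or $s\neq 0$ and whether $j=0$ or $j\neq 0$. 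Your approach replaces this local comparison by the global Mittag--Leffler identity $\sin^{-2}z=\sum_{n}(z-n\pi)^{-2}$, which produces the exact decomposition
\[
f_{H,k}(\lambda)=f_{H,\infty}(\lambda)+4c_H^{2}(1-\cos\lambda)^{2}\sum_{n\neq 0}\sum_{m\in\mathbb{Z}}\frac{1}{(\lambda+2\pi m)^{2}\,|\lambda+2\pi(m+nk)|^{1+2H}}.
\]
This is more structural: the difference is manifestly a positive series (so in particular $f_{H,k}\ge f_{H,\infty}$, a fact not visible in the paper's argument), and the bookkeeping reduces to a single double sum rather than four cases. The paper's method, on the other hand, avoids the partial-fraction identity and stays closer to first principles. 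Both lead to the same bound $C|\lambda|^{2\wedge(3-2H)}/k^{1\wedge 2H}$, and your sketch of how the exponents arise---isolating the singular contributions $m=0$ and $m+nk=0$ and treating the rest by the dichotomy $|m|\lessgtr k|n|/2$---is correct. The extension to (d) via $\partial_H|\cdot|^{-(1+2H)}$ bringing down logarithms bounded by $\log k$ is also the right mechanism.
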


The proof is then split into three parts. First we show that the first estimator of $H$, namely $\widehat{H}_n$, is consistent by proving Proposition \ref{propo:slow:first}. 

The next part is devoted to refining Proposition \ref{propo:slow:first} to obtain a better convergence rate.  The idea behind it is that if we knew a \textit{deterministic} sequence $H(n) \to H_0$ that could replace $H_+$, and if this sequence satisfied some good properties, then we could easily build an oracle with the same definition than $\widehat{H}_n$ (which would not really be an estimator as some information about $H_0$ is needed beforehand). More precisely, we introduce additional parameters $\gamma > \gamma^* > 0$ and we consider $H(n) = H_0 + \varepsilon(n)$ where $\varepsilon(n) 
\sim
( 2H_0 + 1 ) ( \log \gamma - \log \gamma^* )/(2 \log n )$. Then we define
\begin{align*}
k(n) =  \lfloor \gamma^{\frac{-1}{2H(n)+1}} n^{\frac{2H(n)}{2H(n)+1}}  \rfloor \text{ and } N(n) =  \lfloor n/k(n)  \rfloor.
\end{align*}
It is clear that $k(n)$ and $N(n)$ don't meet the conditions of Proposition \ref{propo:slow:first}. More precisely, $k(n), N(n) \to \infty$ and we have
\begin{align*}
\frac{n^{2H(n)}}{k(n)^{2H(n)+1}} \to \gamma
\text{ and }
\frac{n^{2H_0}}{k(n)^{2H_0+1}} \to \gamma^*
.\end{align*}
Then we define our oracle by
\begin{align}
\label{eq:move:firstest}
( \widehat{H}_n^{oracle}, \widehat{\nu}^{oracle}_n ) = \argmin U_n
\end{align}
where the minimum is taken on $ [H_-, H(n)] \times  [ ( k/n)^{H_+}\sigma_-, ( k/n )^{H_-} \sigma_+  ]$. Of course, our parameter of interest is $\sigma = \nu ( n/k )^{H}$ so we define 
$\widehat{\sigma}^{oracle}_n = \widehat{\nu}_n( n/k )^{\widehat{H}^{oracle}_n}
$. We obtain:
\begin{proposition}
\label{propo:move}
We have the following convergence in distribution, under $\zproba_{H_0, \sigma_0}^n$:
\begin{align*}
\begin{pmatrix}
\sqrt{N}
( \widehat{H}^{oracle}_n - H_0)
\\
\frac{\sqrt{N}
}{\log N}
(
\widehat{\sigma}^{oracle}_n - \sigma_0
)
\end{pmatrix}
\to 
\begin{pmatrix}
X
\\
\sigma_0  X
\end{pmatrix}
\end{align*}
where $X$ is a centered Gaussian variable with variance
\begin{align*}
	\frac{
	\frac{4\pi}{\sigma_0^4}
		\int_{-\pi}^\pi 
		\frac{f_{H_0, \infty}(\lambda)^2}
		{( \sigma_0^2 f_{H_0, \infty}(\lambda) + \gamma^* l(\lambda) )^2} 
		d\lambda
	}{
	(	
	\int_{-\pi}^\pi 
	\frac{f_{H_0, \infty}(\lambda)^2}
	{( \sigma_0^2 f_{H_0, \infty}(\lambda) + \gamma^* l(\lambda) )^2} 
	d\lambda
	)
	(
	\int_{-\pi}^\pi 
	\frac{\partial_H f_{H_0, \infty} (\lambda)^2}
	{( \sigma_0^2 f_{H_0, \infty}(\lambda) + \gamma^* l(\lambda) )^2} 
	d\lambda
	)
	-
	(
	\int_{-\pi}^\pi 
	\frac{\partial_H f_{H_0, \infty} (\lambda)f_{H_0, \infty}(\lambda)}
	{( \sigma_0^2 f_{H_0, \infty}(\lambda) + \gamma^* l(\lambda) )^2} 
	d\lambda
	)^2
	}.
\end{align*}
\end{proposition}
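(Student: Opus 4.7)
The plan follows the template of the proof of Theorem \ref{thm:decaying} in Section \ref{sec:decaying:proof}, adapted to the pre-averaged data. First I would reparametrise by setting $\widetilde{\nu} = \sigma(k/n)^{H-H_0}$, equivalently by considering the rescaled process $\widetilde{\mathbf{Y}}^{n} = (n/k)^{H_0}\mathbf{Y}^{n}$. Its spectral density is
\[
\widetilde{g}^{n}_{H,\widetilde{\nu}}(\lambda) = \widetilde{\nu}^2 f_{H, k}(\lambda) + \tau^2 \frac{n^{2H_0}}{k^{2H_0+1}}\, l(\lambda),
\]
which, by Lemma \ref{lemma:slow:preaveraged_psd:uniform_conv} and the choice of $k(n)$, converges to
\[
\widetilde{g}^{\infty}_{H,\widetilde{\nu}}(\lambda) = \widetilde{\nu}^2 f_{H, \infty}(\lambda) + \tau^2 \gamma^* l(\lambda).
\]
I then introduce the associated oracle Whittle contrast $\widetilde{U}_n(H,\widetilde{\nu})$ and its pointwise limit $\widetilde{U}_\infty(H,\widetilde{\nu})$ obtained by replacing $\widetilde{g}^n$ by $\widetilde{g}^\infty$ and the periodogram's expectation by $\widetilde{g}^{\infty}_{H_0,\sigma_0}$. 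Since $\widetilde{U}_\infty$ has the form $(4\pi)^{-1}\int \log g + g_0/g\, d\lambda$ with $g_0 = \widetilde{g}^{\infty}_{H_0,\sigma_0}$, the inequality $\log x \leq x-1$ together with identifiability of $(H,\widetilde{\nu})\mapsto \widetilde{g}^{\infty}_{H,\widetilde{\nu}}$ implies that $\widetilde{U}_\infty$ attains its unique minimum at $(H_0,\sigma_0)$.

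The core analytic step is to transpose Proposition \ref{propo:decaying:likelihood} to this setting: uniform convergence of $\widetilde{U}_n$ to $\widetilde{U}_\infty$ on compact subsets of $\{H : 2H \geq 2H_0 - 1 + \delta\}\times K$, asymptotic normality $\sqrt{N}\,\nabla\widetilde{U}_n(H_0,\sigma_0) \to \gaussian{0}{J}$ with $J = \nabla^2\widetilde{U}_\infty(H_0,\sigma_0)$, and uniform convergence of $\nabla^2\widetilde{U}_n$. The proofs are verbatim adaptations of Section \ref{subsec:decaying:proof_score}: the Toeplitz quadratic-form machinery of Propositions \ref{propo:toeplitz_FT_asymptotic}, \ref{propo:decaying:conv_quad} and \ref{propo:decaying:clt_quadratic} is applied to $\widetilde{\mathbf{Y}}^n$ (now of effective size $N$), with the sequence $(f_{H,k})_k$ playing the role of the fixed spectral density $f_H$. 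All analytic inputs required to feed these results, namely uniform polynomial bounds on $|\partial_H^j\partial_\lambda^m f_{H,k}|$ and uniform convergence of these derivatives to their limits, are precisely those collected in \ref{lemma:slow:preaveraged_psd:control_diff_psd} and \ref{lemma:slow:preaveraged_psd:uniform_conv_derivatives}.

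Once these three facts are in hand, consistency of $(\widehat{H}_n^{oracle},\widetilde{\sigma}_n)$ with $\widetilde{\sigma}_n = (n/k)^{H_0}\widehat{\nu}_n^{oracle}$ follows by the argument of Steps 1--2 in Section \ref{subsec:decaying:completion}: restrict to a compact rectangle $\widetilde{\Theta}^n(\delta,L)$ in the rescaled parameter space, use the $\log x \leq x-1$ argument to establish convergence on the compact, and exclude the four complementary regions by an adaptation of Lemma \ref{lemma:decaying:technical} (where the control of the integrals against the periodogram relies on the same uniform lower bound $c_1|\lambda|^{1-2H}\leq f_{H,k}(\lambda)$ from Lemma \ref{lemma:slow:preaveraged_psd:control_diff_psd}). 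Since $H(n) > H_0$ for all $n$, the true parameter lies in the interior of $[H_-,H(n)]$ and $\widehat{H}_n^{oracle}$ is an interior minimizer asymptotically. A Taylor expansion of $\nabla\widetilde{U}_n$ around $(H_0,\sigma_0)$ then yields
\[
\sqrt{N}\begin{pmatrix} \widehat{H}_n^{oracle} - H_0 \\ \widetilde{\sigma}_n - \sigma_0\end{pmatrix} \to J^{-1}\gaussian{0}{J} \sim \gaussian{0}{J^{-1}}
\]
under $\zproba_{H_0,\sigma_0}^{n}$, and a direct computation of $J$ by differentiating twice under the integral sign in $\widetilde{U}_\infty$ yields $(J^{-1})_{1,1} = \Var(X)$, the variance announced in the proposition.

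To return from $(H,\widetilde{\nu})$ to $(H,\sigma)$ via $\widehat{\sigma}_n^{oracle} = \widehat{\nu}_n^{oracle}(n/k)^{\widehat{H}_n^{oracle}}$ I would mimic Step 3 of Section \ref{subsec:decaying:completion}:
\[
\widehat{\sigma}_n^{oracle} - \sigma_0 = (\widetilde{\sigma}_n - \sigma_0) + \widetilde{\sigma}_n \log(n/k)\,(\widehat{H}_n^{oracle}-H_0) + O_{\zproba_{H_0,\sigma_0}^{n}}\!\bigl(N^{-1}\log^2(n/k)\bigr),
\]
and, using the identity $N \sim (n/k) \sim n^{1/(2H_0+1)}$ which gives $\log(n/k) = (1+o(1))\log N$, multiplication by $\sqrt{N}/\log N$ reproduces the announced joint limit $(X,\sigma_0 X)$. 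The main obstacle in this plan is the double limit in Step 2: the auxiliary spectral densities themselves depend on $n$ through $k(n)$, so one must invoke the Toeplitz asymptotics in their parameter-uniform form (Proposition \ref{propo:toeplitz:inversetoplitz}) with $f^{(\infty)}_{\ldots} = f_{H,\infty}$ and carefully verify its hypothesis \ref{thm:toeplitz:inversetoplitz:hyp_limit}. The uniformity provided by Lemma \ref{lemma:slow:preaveraged_psd} is precisely what makes this verification tractable; without it, one would be reduced to cumbersome case analyses on the location of $\lambda$ relative to $0$.
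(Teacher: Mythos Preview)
Your overall plan is right and matches the paper's, but there is one genuine gap. You write: ``Since $H(n) > H_0$ for all $n$, the true parameter lies in the interior of $[H_-,H(n)]$ and $\widehat{H}_n^{oracle}$ is an interior minimizer asymptotically.'' The first clause is true; the second does not follow. Consistency only gives $\widehat{H}_n^{oracle}\to H_0$ in probability, but the right endpoint $H(n)$ \emph{also} converges to $H_0$ (at rate $\varepsilon(n)\asymp 1/\log n$), so you cannot conclude that $\zproba_{H_0,\sigma_0}^{n}\bigl(\widehat{H}_n^{oracle}=H(n)\bigr)\to 0$ from consistency alone. Without this you cannot set $\nabla\widetilde U_n(\widehat H_n^{oracle},\widetilde\sigma_n)=0$ and the Taylor expansion collapses.

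The paper fixes this by introducing an auxiliary minimizer $\widehat\theta_n^{(2)}$ on the \emph{fixed} domain $[H_-(\delta),H_+]\times[L^{-1},L]$. For that problem the upper boundary is bounded away from $H_0$, so the standard argument gives the CLT $\sqrt N(\widehat H_n^{(2)}-H_0)=O_{\zproba}(1)$. Then one uses the elementary inclusion $\{\widehat H_n^{oracle}=H(n)\}\subset\{\widehat H_n^{(2)}\ge H(n)\}$ together with $\sqrt N\,\varepsilon(n)\to\infty$ to obtain $\zproba_{H_0,\sigma_0}^{n}\bigl(\widehat H_n^{oracle}=H(n)\bigr)\le \zproba_{H_0,\sigma_0}^{n}\bigl(\sqrt N(\widehat H_n^{(2)}-H_0)\ge \sqrt N\,\varepsilon(n)\bigr)\to 0$. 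Only after this extra step is the first-order condition legitimate and your Taylor expansion goes through. Apart from this missing comparison argument, your identification of the obstacles (the $n$-dependence of $f_{H,k}$ handled via Lemma \ref{lemma:slow:preaveraged_psd} and Proposition \ref{propo:toeplitz:inversetoplitz}) is on target.
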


We eventually conclude the proof by showing how Propositions \ref{propo:slow:first} and \ref{propo:move} imply Theorem \ref{thm:slow:optimal}.

\subsection{Properties of the preaveraged data}

We aim here at proving Lemma \ref{lemma:slow:preaveraged_psd}. Indeed, we will only prove two of the five claims, namely \ref{lemma:slow:preaveraged_psd:explicit_psd} and 
\ref{lemma:slow:preaveraged_psd:uniform_conv}
since 
the proofs of 
\ref{lemma:slow:preaveraged_psd:explicit_psd_lim}
and
\ref{lemma:slow:preaveraged_psd:uniform_conv_derivatives}
are easy adaptations of the proven properties and \ref{lemma:slow:preaveraged_psd:control_diff_psd} is a consequence of the rest of the lemma.

\subsubsection*{Proof of Lemma \ref{lemma:slow:preaveraged_psd:explicit_psd}}

Let us fix a complex Gaussian measure $M$, as introduced page 325 in \cite{samorodnitsky2017stable} and recall that for any functions $f, g: \mathbb{R} \to \mathbb{C}$ satisfying $f(-x) = \overline{f(x)}$ and $g(-x) = \overline{g(x)}$, we have:
\begin{align}
\label{eq:cov_complex}
\zEX \bigg[ \int_\mathbb{R} f(x) dM(x)\int_\mathbb{R} g(x) dM(x) \bigg] = \int_\mathbb{R} f(x) \overline{g(x)} dx.
\end{align}
Our result is based on the harmonizable representation of the fractional Brownian motion
\begin{align*}
W^H_t = \frac{1}{C(H)} 
\int_\mathbb{R}
\frac{e^{ixt}-1}{ix}
 | x  |
	^{-H+\frac{1}{2}}
M(dx)
\end{align*}
where $C(H)^2 = \pi ( H\Gamma(2H)\sin( H\pi))^{-1}$ (see 
Proposition 7.2.8 in in \cite{samorodnitsky2017stable}). Basic algebra implies that
\begin{align*}
Y_n^{(k)}
=
\frac{1}{C(H)}
\int_\mathbb{R}
\frac{( e^{ix} - 1 )^2}{ixk( e^{\frac{ix}{k}} - 1 )}
\frac{e^{ix(n-1)}}{|x|^{H-1/2}}
M(dx)
\end{align*}
where $C(H)$ is as previously. We use this representation and \eqref{eq:cov_complex} to compute auto-correlation of this sequence from which we identify the spectral density. For any $\tau$, we have
\begin{align*}
\zEX \big[Y^{(k)}_{\tau+1} Y_1^{(k)}\big]
&=
\frac{1}{C(H)^2}
\int_\mathbb{R}
\frac{ | e^{ix} - 1  |^4}{k^2 | e^{\frac{ix}{k}} - 1  |^2}
\frac{e^{ix\tau}}{|x|^{2H+1}}
dx
=
\frac{4H\Gamma(2H)\sin( H\pi)}{\pi}
\int_\mathbb{R}
\frac{\sin(\frac{x}{2})^4}{k^2\sin(\frac{x}{2k})^2}
\frac{e^{ix\tau}}{|x|^{2H+1}}
dx
\\
&=
\frac{4\Gamma(2H+1)\sin( H\pi)}{2\pi}
\sum_{m\in\mathbb{Z}}
\int_{-\pi}^\pi
\frac{\sin(\frac{x}{2} + \pi m)^4}{k^2\sin(\frac{x}{2k} + \frac{\pi m}{k})^2}
\frac{e^{ix\tau}}{|x + 2\pi m|^{2H+1}}
dx.
\end{align*}

\subsubsection*{Proof of Lemma \ref{lemma:slow:preaveraged_psd:uniform_conv}}

As $f_{H, k}$ and $f_{H, \infty}$ are both even functions, we focus on $\lambda > 0$. We fix $0 < \lambda \leq \pi$ and  we define for $s \in \mathbb{Z}$
\begin{align*}
J_s^{(k)}(\lambda) =  \Big\{
j \in \mathbb{Z}:  \Big| \frac{\lambda + 2\pi j}{2k} + s\pi  \Big| \leq \frac{2\pi}{3}
 \Big\}.
\end{align*}
Clearly, $J_s^{(k)}(\lambda) = J_0^{(k)}(\lambda) - sk$ and there exist $\frac{2}{3} < \alpha < 1$ and $k_0$ both independent of $\lambda$ such that $J_0(\lambda) \subset \lint - \lfloor\alpha k \rfloor, \lfloor \alpha k \rfloor\rint$ whenever $k \geq k_0$. Moreover, $\mathbb{Z} = \cup_s J_s^{(k)}(\lambda)$ any $j \in \mathbb{Z}$ cannot be in more that two different $J_s^{(k)}(\lambda)$ at the same time. Therefore, if $C$ is a constant that can change from line to line, that is independent of $H$ and $k$, we have
\begin{align*}
 | 
	f_{H, k}(\lambda)
	-
	f_{H, \infty}(\lambda)
 |
&\leq
C
 \bigg| 
	\sum_{j \in \mathbb{Z}}
		\frac{( 1 - \cos \lambda )^2}{ | \lambda + 2\pi j  |^{1+2H}}
		\bigg(
			\frac{4}{ | \lambda + 2\pi j  |^{2}}
			-
			\frac{1}{k^2 \sin ( \frac{\lambda}{2k} + \frac{\pi j}{k} )^2}
		\bigg)
 \bigg|
\\&\leq
\frac{C}{k^2}
\sum_{j \in \mathbb{Z}}
		\frac{|\lambda|^4}{ | \lambda + 2\pi j  |^{1+2H}}
 \bigg| 
			\frac{1}{ | \frac{\lambda + 2\pi j}{2k}  |^{2}}
			-
			\frac{1}{ \sin ( \frac{\lambda}{2k} + \frac{\pi j}{k} )^2}
 \bigg|
\\&\leq
\frac{C}{k^2}
\sum_{s \in \mathbb{Z}}
\sum_{j \in J_0^{(k)}(\lambda)}
		\frac{|\lambda|^4}{ | \lambda + 2\pi j - 2\pi sk |^{1+2H}}
 \bigg| 
			\frac{1}{ | \frac{\lambda + 2\pi j - 2\pi sk}{2k}  |^{2}}
			-
			\frac{1}{ \sin ( \frac{\lambda}{2k} + \frac{\pi j}{k} )^2}
 \bigg|
\\&\leq
\frac{C}{k^2}
\sum_{s \in \mathbb{Z}^*}
\sum_{j \in J_0^{(k)}(\lambda)}
		\frac{|\lambda|^4}{ | \lambda + 2\pi j + 2\pi sk |^{1+2H}}
 \bigg| 
			\frac{1}{ | \frac{\lambda + 2\pi j + 2\pi sk}{2k}  |^{2}}
			-
			\frac{1}{ \sin ( \frac{\lambda}{2k} + \frac{\pi j}{k} )^2}
 \bigg|
\\&
\;\;\;\;
+
\frac{C}{k^2}
\sum_{j \in J_0^{(k)}(\lambda)}
		\frac{|\lambda|^4}{ | \lambda + 2\pi j |^{1+2H}}
 \bigg| 
			\frac{1}{ | \frac{\lambda + 2\pi j}{2k}  |^{2}}
			-
			\frac{1}{ \sin ( \frac{\lambda}{2k} + \frac{\pi j}{k} )^2}
 \bigg|.
\end{align*}
Remark that $\sup_{0 < |x| \leq \frac{2\pi}{3}}  | \frac{1}{x^2} - \frac{1}{\sin x^2}  | < \infty$. Applying this to $\frac{\lambda}{2k} + \frac{\pi j}{k}$ whenever $j \in J_0^{(k)}$, we get
\begin{align*}
 | 
	f_{H, k}(\lambda)
	-
	f_{H, \infty}(\lambda)
 |
&\leq
C
\sum_{s \in \mathbb{Z}^*}
\sum_{j \in J_0^{(k)}(\lambda)}
		\frac{|\lambda|^4}{ | \lambda + 2\pi j + 2\pi sk |^{3+2H}}
+
\frac{C}{k^2}
\sum_{j \in J_0^{(k)}(\lambda)}
		\frac{|\lambda|^4}{ | \lambda + 2\pi j |^{1+2H}}
\\&
\;\;\;\;
+
C
\sum_{s \in \mathbb{Z}^*}
\sum_{j \in J_0^{(k)}(\lambda)}
		\frac{|\lambda|^4}{ | \lambda + 2\pi j + 2\pi sk |^{1+2H} | \lambda + 2\pi j  |^{2}}
\\&
\;\;\;\;
+
C
\sum_{s \in \mathbb{Z}^*}
\sum_{j \in J_0^{(k)}(\lambda)}
		\frac{|\lambda|^4}{ | \lambda + 2\pi j + 2\pi sk |^{1+2H}}.
\end{align*}
This gives four terms to control. We start with few remarks:
\begin{tight_itemize}
\item If $j \in J_0^{(k)}(\lambda)$ and $s > 0$, then $ | \lambda + 2\pi j + 2\pi sk | \geq 2\pi (s-\alpha) k$  as $j \geq -\alpha k$ and $\lambda > 0$.
\item Let us fix $\alpha'  \in (\alpha, 1)$. Then for $k \geq k_0$ big enough, if $j \in J_0^{(k)}(\lambda)$ and $s < 0$, then  $ | \lambda + 2\pi j + 2\pi sk | \geq 2\pi (|s|-\alpha) k - \pi \geq 2\pi (|s|-\alpha') k$  as $j \leq \alpha k$ and $\lambda < \pi$.
\item If $j \neq 0$, $ | \lambda + 2\pi j  | \geq \pi$ as $- \pi \leq \lambda \leq \pi$
\item $\#J_0^{(k)} (\lambda) \leq Ck$
\end{tight_itemize}
For $k \geq k_0$, the first term is bounded above by
\begin{align*}
\sum_{s > 0}
\sum_{j \in J_0^{(k)}(\lambda)}
		\frac{|\lambda|^4}{ | 2\pi (s-\alpha) k  |^{3+2H}}
+
\sum_{s > 0}
\sum_{j \in J_0^{(k)}(\lambda)}
		\frac{|\lambda|^4}{ | 2\pi (s-\alpha') k   |^{3+2H}}
&\leq
\frac{C|\lambda|^4}{k^2}.
\end{align*}
The second term is itself controlled by
\begin{align*}
\frac{C}{k^2}
\sum_{j \in J_0^{(k)}, \; j\neq 0}
		\frac{|\lambda|^4}{ | \lambda + 2\pi j |^{1+2H}}
+
\frac{C}{k^2}
|\lambda|^{3-2H}
\leq
\frac{C|\lambda|^4}{k}
+
\frac{C|\lambda|^{3-2H}}{k^2}
.
\end{align*}
The third one is rewritten 
\begin{align*}
\sum_{s \in \mathbb{Z}^*}
		\frac{|\lambda|^4}{ | \lambda + 2\pi sk |^{1+2H} | \lambda  |^{2}}
+
\sum_{s \in \mathbb{Z}^*}
\sum_{j \in J_0^{(k)}(\lambda),\; j\neq 0}
		\frac{|\lambda|^4}{ | \lambda + 2\pi j + 2\pi sk |^{1+2H} | \lambda + 2\pi j  |^{2}}
\end{align*}
which is controlled for $k \geq k_0$ large enough by:
\begin{align*}
&|\lambda|^2
\sum_{s > 0}
		\frac{1}{ | 2\pi sk |^{1+2H}}
+	
|\lambda|^2
\sum_{s > 0}
\frac{1}{ | 2\pi sk - \pi  |^{1+2H}}
\\
&\;\;\;\;+
C
\sum_{s > 0}
\sum_{j \in J_0^{(k)}(\lambda),\; j\neq 0}
		\frac{|\lambda|^4}{ | 2\pi (s-\alpha) k  |^{1+2H} }		
+
C
\sum_{s > 0}
\sum_{j \in J_0^{(k)}(\lambda),\; j\neq 0}
		\frac{|\lambda|^4}{ | 2\pi (s-\alpha') k  |^{1+2H}}	
\\&\;\;\;\;\;\;\;\;
\leq \frac{C|\lambda|^2}{k^{1+2H}} + \frac{C|\lambda|^4}{k^{2H}}	.
\end{align*}
The last term is controlled for $k \geq k_0$ big enough by
\begin{align*}
\sum_{s > 0}
\sum_{j \in J_0^{(k)}(\lambda)}
		\frac{|\lambda|^4}{ | 2\pi (s-\alpha) k  |^{1+2H} }		
+
C
\sum_{s > 0}
\sum_{j \in J_0^{(k)}(\lambda)}
		\frac{|\lambda|^4}{ | 2\pi (s-\alpha') k  |^{1+2H}}	
\leq \frac{C|\lambda|^4}{k^{2H}}	.
\end{align*}

We easily conclude by combining all these results.

\subsection{Proof of Proposition \ref{propo:slow:first}}

It consists in an easy adaptation of the proof of Theorem \ref{thm:decaying}. There are just two major differences in this proof. First, the size of the sample which is now of size $N \sim n/k$, instead of size $n$. Hence we should replace $n$ by $n/k$ whenever it appears in the proof of Theorem \ref{thm:decaying}.

The second difference is the presence of the additional asymptotic parameter $k$ in the power spectral density $f^{n}_{H,\sigma} (\lambda) 
=
\sigma^2( n/k )^{2H} f_{H, k} ( \lambda ) + \tau^2 k^{-1} l( \lambda )$ of the sequence $\mathbf{Y}^n$, see \eqref{eq:slow:spectral}. This is not a big issue since Lemma \ref{lemma:slow:preaveraged_psd} ensures that $ f_{H, k} \to  f_{H, \infty}$ and $ f_{H, k} -  f_{H, \infty}$ is well controlled. Thus in the limits appearing in the proof, we just need to replace $f_H$ by $f_{H,k}$ or $f_{H,\infty}$ and use the controls provided by Lemma \ref{lemma:slow:preaveraged_psd}.

\subsection{Proof of Proposition \ref{propo:move}}

It is again an adaptation of the proof of Theorem \ref{thm:decaying}. However, the space on which the supremum defining $( \widehat{H}_n^{oracle}, \widehat{\nu}^{oracle}_n ) $ in \eqref{eq:move:firstest} depends on $n$. Thus, additional care needs to be taken to ensure the oracle obtained in the proof is in the interior of this domain. 

In view of \eqref{eq:decaying:boundedspace} and \eqref{eq:decaying:boundedest}, we define $H_1(n) = H(n)$ and $H_2(n) = H_+$ and for $i=1$ or $2$
\begin{align*}
\widetilde{\Theta}^{n}_i( \delta, L ) 
&=
([ H_0 + (\delta - 1)/2, H_i(n)  ] \cap [H_-, H_+] )
\times 
(   [ \sigma_- (n/k)^{H_0 - H(n)}, \sigma_+(n/k)^{H_0-H_-}  ] \cap  [ L^{-1},  L ]
),
\\
\widehat{\theta}^{(i)}_n (\delta, L) &= ( \widehat{H}^{(i)}_n(\delta, L),\widehat{\sigma}^{(i)}_n(\delta, L) )
=
\argmin_{( H, \widetilde{\nu} ) \in \widetilde{\Theta}^{n}_i( \delta, L ) }
\widetilde{U}_n ( H, \widetilde{\nu} ).
\end{align*}

We can adapt here the proofs of Steps 1 and 2 in Section \ref{subsec:decaying:completion} to show that $( \widehat{H}^{(i)}_n(\delta, L),\widehat{\sigma}^{(i)}_n(\delta, L) )$ converge toward $(H_0, \sigma_0)$ in $\zproba_{H_0, \sigma_0}^{n}$-distribution for $i=1,2$. This implies that $\widehat{H}^{(i)}_n(\delta, L)$ is in the interior of $[ H_0 + (\delta - 1)/2, H_+  ] \cap [H_-, H_+]$ and therefore we can prove as in Section \ref{subsec:decaying:completion} that 
\begin{align*}
\sqrt{N} 
\begin{pmatrix}
\widehat{H}^{(2)}_n(\delta, L) - H_0
\\
 \widehat{\sigma}^{(2)}_n(\delta, L) - \sigma_0
\end{pmatrix}
\to 
J^{-1} \gaussian{0}{J} \sim \gaussian{0}{J^{-1}}
\end{align*}
where $J = \nabla^2 \widetilde{U}_\infty (H_0,\sigma_0)$. A similar result for  $\widehat{\theta}^{(1)}_n (\delta, L)$ cannot be directly obtained because $H(n) \to H_0$ so $\widehat{\theta}^{(1)}_n (\delta, L)$ is not necessarily in the interior of $\widetilde{\Theta}^{n}_1( \delta, L )$. But using this asymptotic limit together with the assumption $N \varepsilon(n) \to \infty$, we have
\begin{align*}
\zproba_{H_0, \sigma_0}^{n} \big(
\widehat{H}^{(1)}_n(\delta, L) = H(n)
 \big)
&\leq
\zproba_{H_0, \sigma_0}^{n} \big(
\widehat{H}^{(2)}_n(\delta, L) \geq H(n)
\big)
\\
&\leq
\zproba_{H_0, \sigma_0}^{n} \big(
N ( \widehat{H}^{(2)}_n(\delta, L) - H_0 ) \geq N\varepsilon(n)
\big) \to 0.
\end{align*}
This implies that $\widehat{H}^{(1)}_n(\delta, L)$ must be in the interior of $[ H_0 + (\delta - 1)/2, H(n)  ] \cap [H_-, H_+] $ so that we can now adapt the rest of the proof of Theorem \ref{thm:decaying} and conclude with Proposition \ref{propo:move}.

\subsection{Completion of the proof}

First, we define 
$h_0 = h_0(n) = H_- + i_0m^{-1}( H_+ - H_- )$ and $h_1 = h_1(n) = h_0 + m^{-1}(H_+ - H_-)$
where
$i_0 = i_0(n) =  \big\lceil m(n) \frac{H_0 - H_- + q(n)}{H_+ - H_-}  \big\rceil$. We also define
$
i_U = i_U(n) =  \big\lceil m(n) \frac{H_0 - H_- + q(n)}{H_+ - H_-} + U  \big\rceil 
$. For any $n$, there exists $0 \leq c_n < 1$ deterministic such that $i_0 = m(n) \frac{H_0 - H_- + q(n)}{H_+ - H_-} + c_n $. Notice that we have $i_U = i_0$ if $U \leq c_n$ and $i_U = i_0 + 1$ otherwise. Moreover, since $\log(n) q(n) \to \delta^*$ and $\log (n) m^{-1} \to 0$, we must have
\begin{align*}
\log(n) ( h_0(n) - H_0 ) \to \delta^*
\text{ and } 
\log(n) ( h_1(n) - H_0 ) \to \delta^*.
\end{align*}

The next lemma shows that essentially, $\widehat{i_n}$ can be replaced by $i_U$ that is still random but that does not depend on the first estimator of $H$.

\begin{lemma}
\label{lemma:optimal:2}
$\ieproba{H_0, \sigma_0}{(n)}{
\widehat{i}_n = i_U
}
\to 1$.
\end{lemma}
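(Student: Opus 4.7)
The plan is to exploit the independence of the uniform randomization $U$ from the pilot estimator $\widehat{H}_n$. I would first rewrite both indices in a common form: let $a_n = m(n)\tfrac{H_0 - H_- + q(n)}{H_+ - H_-}$ (deterministic) and $D_n = m(n)\tfrac{\widehat{H}_n - H_0}{H_+ - H_-}$ (random, depending on the data through the pilot). Then $i_U = \lceil a_n + U \rceil$ and $\widehat{i}_n = \lceil a_n + U + D_n \rceil$, so the event $\{\widehat{i}_n \neq i_U\}$ becomes the event that $a_n + U$ and $a_n + U + D_n$ lie on opposite sides of some integer.

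The key input is that $D_n \to 0$ in $\mathbb{P}_{H_0,\sigma_0}^{(n)}$-probability. This follows from Proposition \ref{propo:slow:first}, which yields $\widehat{H}_n - H_0 = O_{\mathbb{P}_{H_0,\sigma_0}^{(n)}}(v_n^{-1})$ for $v_n = n^{1/(4H_++2)}$ (up to the $\sqrt{N}$ rate of that proposition), combined with the standing hypothesis $m(n)^{-1} v_n \to \infty$, which gives $m(n)/v_n \to 0$ and hence $D_n \to 0$ in probability.

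Next I would condition on the pilot estimator. Because $U$ is introduced as independent of $(W^H, \xi)$, it is independent of $\widehat{H}_n$, hence of $D_n$. Fix $D_n = d$ with $|d| \leq 1$. As $u$ ranges over $[0,1]$, the function $u \mapsto \lceil a_n + u \rceil$ is piecewise constant with a single jump at $u^\ast = \lceil a_n \rceil - a_n \in [0,1)$, and likewise $u \mapsto \lceil a_n + u + d \rceil$ has a single jump shifted by $-d$ modulo $1$. The set of $u$ on which the two ceilings disagree is thus an interval (possibly empty) of length $|d|$, so integrating over the uniform $U$ conditionally on $D_n$ gives
\begin{align*}
\mathbb{P}_{H_0,\sigma_0}^{(n)}\bigl(\widehat{i}_n \neq i_U \,\big|\, D_n\bigr) \;\leq\; \min(|D_n|, 1).
\end{align*}
Taking expectations and applying dominated convergence (since $D_n \to 0$ in probability and $\min(|D_n|,1)$ is bounded by $1$) yields $\mathbb{P}_{H_0,\sigma_0}^{(n)}(\widehat{i}_n \neq i_U) \to 0$, which is the claim.

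There is no real obstacle here; the only subtle point to emphasize carefully in the write-up is the use of the independence of $U$ from $(W^H, \xi)$ — this is precisely the role of $U$ in the construction, allowing the rounding error around the grid boundary to be integrated out regardless of how the pilot $\widehat{H}_n$ behaves near the threshold.
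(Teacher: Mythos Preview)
Your argument is correct and rests on the same two ingredients as the paper's proof: the independence of $U$ from the pilot $\widehat{H}_n$, and the fact that $D_n = m(n)(\widehat{H}_n-H_0)/(H_+-H_-)\to 0$ in probability (from $m(n)^{-1}v_n\to\infty$ together with tightness of $v_n(\widehat{H}_n-H_0)$). The paper implements this via an $\eta$-margin and a case split on $\{U\le c_n\}$ versus $\{U>c_n\}$, whereas you condition on $D_n$ and integrate over $U$, which is slightly cleaner; one small correction is that the set of $u\in[0,1]$ on which the two ceilings disagree need not be a single interval (it can wrap around into two pieces when $|d|$ exceeds the distance from $a_n+u$ to the nearest integer), but its Lebesgue measure is still exactly $|d|$, which is all your bound $\min(|D_n|,1)$ requires.
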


\begin{proof}
Let $w_n = m^{-1} v_n (H_+ - H_-)$. For any $\eta > 0$, we have:
\begin{align*}
\ieproba{H_0, \sigma_0}{(n)}{
\widehat{i}_n = i_U
}
&=
\ieproba{H_0, \sigma_0}{(n)}{
\widehat{i}_n = i_0,\;  U \leq c_n
}
+
\ieproba{H_0, \sigma_0}{(n)}{
\widehat{i}_n = i_0+1, \; U > c_n
}
\\
&=
\ieproba{H_0, \sigma_0}{(n)}{
w_n ( c_n - 1 - U ) < v_n ( \widehat{H}_n - H_0 ) \leq w_n ( c_n - U ),\;  U \leq c_n
}
\\
&\;\;\;\;
+
\ieproba{H_0, \sigma_0}{(n)}{
w_n ( c_n - U ) < v_n ( \widehat{H}_n - H_0 ) \leq w_n ( c_n + 1 - U ),\;  U > c_n
}
\\
&\geq
\ieproba{H_0, \sigma_0}{(n)}{
 | v_n ( \widehat{H}_n - H_0 )  | \leq w_n \eta, \; \eta \leq U \leq c_n - \eta
}
\\
&\;\;\;\;
+
\ieproba{H_0, \sigma_0}{(n)}{
 | v_n ( \widehat{H}_n - H_0 )  | \leq w_n \eta, \; c_n + \eta \leq U \leq 1 - \eta
}
\\
&\geq
( ( c_n - 2 \eta )_+ + ( 1 - c_n - 2 \eta )_+ )
\ieproba{H_0, \sigma_0}{(n)}{
 | v_n ( \widehat{H}_n - H_0 )  | \leq w_n \eta}
\\
&\geq
( 1 - 4 \eta )
\ieproba{H_0, \sigma_0}{(n)}{
 | v_n ( \widehat{H}_n - H_0 )  | \leq w_n \eta}.
\end{align*}
We easily conclude since $v_n ( \widehat{H}_n - H_0 )$ is bounded in probability, $w_n \eta \to \infty$ and we can take $\eta$ as small as we want.
\end{proof}

\begin{proof}[Proof of Theorem \ref{thm:slow:optimal}]
For $j=0$ or $1$, let 
$
( \widehat{H}_n^{j}, \widehat{\nu}_n^{j} ) = \argmin U_n
$
on $[H_-, h_j(n)] \times  [ ( k/n)^{H_+}\sigma_-, ( k/n )^{H_-} \sigma_+  ]$
and let $\sigma^{j}_n = \widehat{\nu}_n( \frac{n}{k})^{\widehat{H}^{j}_n}$. By Proposition \ref{propo:move}, we know that 
\begin{align*}
\begin{pmatrix}
n^{1/(4H_0+2)}
( \widehat{H}_n^{j} - H_0)
\\
n^{1/(4H_0+2)}
\log (n)^{-1}
(
\widehat{\sigma}_n^{j} - \sigma_0
)
\end{pmatrix}
\to 
\begin{pmatrix}
(\gamma^*)^{-1/(4H_0+2)} X
\\
(\gamma^*)^{-1/(4H_0+2)} \sigma_0  X
\end{pmatrix}
\end{align*}
where $X$ in as in Theorem \ref{thm:slow:optimal}. Moreover, if $\widehat{i}_n = i_j$, then $( \widehat{H}_n^{\mathrm{opt}}, \widehat{\nu}_n^{\mathrm{opt}} ) = ( \widehat{H}_n^{j}, \widehat{\nu}_n^{j} )$. We conclude using Lemma \ref{lemma:optimal:2} and using that $U$ is independent from both $( \widehat{H}_n^{\mathrm{opt}}, \widehat{\nu}_n^{\mathrm{opt}} )$ and $( \widehat{H}_n^{j}, \widehat{\nu}_n^{j} )$ for $j=0,1$.

\end{proof}

\section{Proof of Theorem \ref{thm:slow:optimality}}

\subsection{Introduction and notations}

Recall that the observations $\mathbf{Z}^{n}$ are given by 
$Z_i^{n} = \sigma W_{i/n}^H + \tau \xi_i^n
$. We denote $\zproba_{H,\sigma}^{n}$ the law of these observations. For any function $f$, we denote $\zproba_f^n$ the law of these observations given $\sigma W^H = f$. 

Let $D( \mu, \nu)
=
\int \log ( \frac{d\mu}{d\nu} ) d\mu$ denote the Kullback-Leibler divergence between two probability measures $\mu$ and $\nu$ and let $\norm{\mu }_{TV} = \sup_{\norm{f}_{\infty} \leq 1} | \int f d\mu |$ be the the total variation of a signed measure $\mu$. Recall that the Pinsker's inequality links these two quantities: $\norm{\mu - \nu}_{TV}^2 \leq 2D( \mu, \nu)$.

Let $\salC^0$ be the space of continuous functions on $[0,1]$. For any $0 \leq \alpha < 1$, we also define the $\alpha$-Hölder norm of $f\in \salC^0$ by
\begin{align*}
\norm{f}_{\salH^\alpha} = \norm{f}_{\infty} + \sup_{s \neq t}
\frac{|f(s) - f(t) |}{|s - t|^\alpha}.
\end{align*}

For $\alpha = 0$, this is a norm on $\salC^0$ that is equivalent to $\norm{\cdot}_{\infty}$. We denote $\salH^\alpha$ the set of $f \in \salC^0$ such that $\norm{f}_{\salH^\alpha} $ is finite. For such a function $f$ satisfying $f(0) = 0$, note that we have:
\begin{align}
\label{eq:holder_cont_L2}
\Bigg | \frac{1}{n} \sum_{k=0}^n f( k/n ) ^2 - \norm{f}_{2}^2 \Bigg | \leq \frac{1}{n^\alpha} \norm{f}_{\salH^\alpha}^2
\end{align}

Following \cite{gloter2007estimation}, let $( \sigma_0, H_0)$ be a point in the interior of $\Theta$. Set, for $I > 0$ well chosen, $\varepsilon_n = I^{-1} n^{-1/(4H_0 + 2)}$; $
H_1 = H_0 + \varepsilon_n$ and $
\sigma_1 = \sigma_0 2^{j_0 \varepsilon_n}
$ where
$j_0 = \left \lfloor \log_2 ( n^{1/(2H_0+1)} ) \right \rfloor$.

\subsection{Key lemmas}

\begin{lemma}
\label{lemma:optimal:bounds}
For any functions $f$ and $g$ such that $f(0) = g(0) = 0$, we have a constant $c$ and a universal non increasing strictly positive function $R$ such that
\begin{align*}
\norm{\zproba_f^n - \zproba_g^n}_{TV}
\leq c n^{\frac{1}{2}} \norm{f-g}_\infty
\;\text{and}\;
1 - \frac{1}{2} \norm{\zproba_f^n - \zproba_g^n}_{TV} \geq 
R ( 
n \norm{f-g}_2^2
+ 
n^{1-\alpha} \norm{f-g}_{\salH^\alpha}^2
)
\end{align*}
\end{lemma}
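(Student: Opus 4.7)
Under $\zproba_f^n$, the observations $(Z_0^n,\dots,Z_n^n)$ are independent Gaussians with $Z_i^n \sim \gaussian{f(i/n)}{\tau^2}$, so the Kullback--Leibler divergence decomposes coordinate-wise as
\begin{align*}
D(\zproba_f^n, \zproba_g^n)
=
\sum_{i=0}^n \frac{(f(i/n) - g(i/n))^2}{2\tau^2}.
\end{align*}
This identity is the engine for both inequalities, and the only quantitative input we need beyond it is the Riemann approximation bound \eqref{eq:holder_cont_L2} applied to $f-g$ (which also vanishes at $0$).

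For the first bound, I would use Pinsker's inequality $\norme{\mu-\nu}_{TV}^2 \leq 2 D(\mu,\nu)$ (in the paper's convention) and then the trivial estimate $\sum_{i=0}^n (f-g)(i/n)^2 \leq (n+1)\norme{f-g}_\infty^2$. This yields the first inequality with $c = \sqrt{2/\tau^2}$ (up to an absolute constant absorbing the shift $n \to n+1$).

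For the second bound, the natural route is the Bretagnolle--Huber inequality, which in the paper's normalisation reads $1 - \tfrac12 \norme{\mu-\nu}_{TV} \geq \tfrac12 \exp(-D(\mu,\nu))$. Combining it with the KL identity and applying \eqref{eq:holder_cont_L2} to $f-g$,
\begin{align*}
\sum_{i=0}^n (f-g)(i/n)^2
\leq
n\norme{f-g}_2^2 + n^{1-\alpha}\norme{f-g}_{\salH^\alpha}^2,
\end{align*}
one gets
\begin{align*}
1 - \tfrac{1}{2}\norme{\zproba_f^n - \zproba_g^n}_{TV}
\geq
\tfrac{1}{2}\exp\!\Big(-\tfrac{1}{2\tau^2}\big(n\norme{f-g}_2^2 + n^{1-\alpha}\norme{f-g}_{\salH^\alpha}^2\big)\Big),
\end{align*}
which is of the required form with $R(x) := \tfrac{1}{2}\exp(-x/(2\tau^2))$, a strictly positive non-increasing function independent of $f$, $g$ and $n$.

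There is no real obstacle here: the statement is essentially a packaging of standard KL-based comparison inequalities together with \eqref{eq:holder_cont_L2}. The only point requiring a little care is the normalisation of $\norme{\cdot}_{TV}$ used in the paper (with range $[0,2]$), which fixes the constants in Pinsker and in Bretagnolle--Huber so that the factor $\tfrac12$ in front of the total variation in the second inequality comes out cleanly.
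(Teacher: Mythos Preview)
Your proposal is correct and follows essentially the same approach as the paper: compute the Kullback--Leibler divergence coordinate-wise, use Pinsker for the upper bound, and a KL-to-TV lower bound (you make Bretagnolle--Huber explicit, whereas the paper defers to Proposition~4 of \cite{gloter2007estimation}) together with the H\"older/Riemann estimate \eqref{eq:holder_cont_L2}. The only cosmetic difference is that the paper's displayed KL formula drops the $1/(2\tau^2)$ factor, which you correctly retain; this is harmless since it only affects the constants $c$ and the scale inside $R$.
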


\begin{proof}
First, notice that for any functions $f$ and $g$, we have $
D ( \zproba_f^n, \zproba_g^n ) = \sum_{k=0}^n ( f( k/n ) - g( k/n ) )^2
$, since under $\zproba_f^n$, the family $( Z^{n}_i )_i$ is composed of $n+1$ independent Gaussian variables with mean $( f( \frac{i}{n} ))_i$ and variance $\tau^2$. Plugging this into the proof of Proposition 4 in \cite{gloter2007estimation} together with the bound \eqref{eq:holder_cont_L2} yields to the result.
\end{proof}

\begin{lemma}
\label{lemma:optimal:key}
For $I$ large enough, there exists a sequence of probability $( X_n, \salX_n, \mathbf{P}^{n} )$ on which can be defined two sequences of stochastic processes, $( \xi^{i,n}_t )_{t\in [0, 1]}$ and a measurable transformation $T^n: X_n \to X_n$ such that the following hold:
\begin{enumerate}[label=(\roman*)]
  \setlength{\itemsep}{0pt}
  \setlength{\parskip}{0pt}
\item \label{lemma:optimal:key:enum:1} For $\alpha < H_0$, the sequences $\norm{\xi^{0,n}}_{\salH^\alpha}$ and $\norm{\xi^{1,n}}_{\salH^\alpha}$ are tight under $\mathbf{P}^{n}$.
\item \label{lemma:optimal:key:enum:2} If $P^{i,n}( \cdot ) = \int
\ieproba{\xi^{i,n}(\omega)}{n}{ \cdot} \mathbf{P}^{n} ( d\omega) $, then $\norm{P^{i,n} - \zproba_{H,\sigma}^{n}}_{TV} \to 0$
\item \label{lemma:optimal:key:enum:3} The sequence $
n \norm{
\xi^{1,n}(\omega)
-
\xi^{0,n}( T^n\omega )
}_2^2
$
is tight under $\mathbf{P}^{n}$.
\item \label{lemma:optimal:key:enum:4}If $n$ is large enough, the probability measure $\mathbf{P}^{n}$ and its image $T^n \mathbf{P}^{n}$ are equivalent on $( X^n, \salX_n)$ and there exists $0 < c^* < 2$ such that
$\norm{
\mathbf{P}^{n}
-
T^n \mathbf{P}^{n}
}_{TV} \leq 2 - c^* < 2
$ for $n$ big enough.
\item \label{lemma:optimal:key:enum:5} The sequence $
n^{1-\alpha} \norm{
\xi^{1,n}(\omega)
-
\xi^{0,n}( T^n\omega )
}_{\salH^\alpha}^2
$
is tight under $\mathbf{P}^{n}$.
\end{enumerate}
\end{lemma}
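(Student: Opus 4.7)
The plan is to adapt and refine the wavelet-based two-point construction of Gloter and Hoffmann \cite{gloter2007estimation}. I would fix an orthonormal, compactly supported wavelet basis $(\psi_{j,k})$ of $L^2(\mathbb{R})$ with regularity $r > H_+$, and set $X_n = \mathbb{R}^{M_n}$ equipped with the standard Gaussian measure $\mathbf{P}^n$, where $M_n$ indexes the wavelet coefficients at scales $|j|\leq J_n$ whose supports interact with $[0,1]$, for some $J_n\to\infty$ to be tuned. The canonical coordinates $\omega = (Z_{j,k})$ play the role of a finite-dimensional Gaussian white noise, and I use them to build
\[
\xi^{i,n}(\omega)(t) \;=\; \sigma_i \sum_{(j,k)\in M_n} a^{H_i}_{j,k}\, Z_{j,k}(\omega)\, \psi_{j,k}(t), \qquad i=0,1,
\]
where $a^{H}_{j,k}\asymp 2^{-j(H+1/2)}$ are the scaling factors appearing in the wavelet representation of $\sigma W^{H}$. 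By the wavelet characterization of $\salH^\alpha$, combined with a standard supremum estimate for scale-indexed Gaussians, both $\|\xi^{0,n}\|_{\salH^\alpha}$ and $\|\xi^{1,n}\|_{\salH^\alpha}$ are tight for any $\alpha<H_0$, which yields (i). For (ii), Lemma \ref{lemma:optimal:bounds} reduces the question to controlling the truncation error $\|\xi^{i,n}-\sigma_iW^{H_i}\|_\infty$ (or its $\salH^\alpha$ counterpart), which behaves as a fixed power of $2^{-J_n}$ in expectation; choosing $J_n$ suitably large makes the total variation vanish.

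Next, I would define $T^n\colon X_n\to X_n$ as the coordinatewise rescaling $(T^n\omega)_{j,k} = \lambda^n_j Z_{j,k}(\omega)$, with
\[
\lambda^n_j \;=\; \frac{\sigma_1\,a^{H_1}_{j,k}}{\sigma_0\,a^{H_0}_{j,k}} \;\asymp\; 2^{(j_0-j)\varepsilon_n}.
\]
The calibration $\sigma_1 = \sigma_0\, 2^{j_0\varepsilon_n}$ was made precisely so that $\lambda^n_{j_0} = 1$: after applying $T^n$, the contributions at the critical scale $j_0$ are unchanged, which is the mechanism making the two hypotheses indistinguishable at the optimal scale. A direct computation then yields $\xi^{0,n}\circ T^n = \xi^{1,n}$ up to lower-order boundary corrections, so that (iii) and (v) reduce to estimating this residual in $L^2$ and $\salH^\alpha$ respectively; the key quantitative input is $\lambda^n_j-1\asymp (j_0-j)\varepsilon_n$, which after summation over $j$ delivers the required rates $n^{-1}$ and $n^{-(1-\alpha)}$.

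The main obstacle is (iv), i.e.\ controlling the total variation between $\mathbf{P}^n$ and $T^n\mathbf{P}^n$. Since these are products of centred Gaussians with variances $1$ and $(\lambda^n_j)^2$ respectively, Pinsker's inequality combined with the closed form of the KL divergence of Gaussians yields
\[
\|\mathbf{P}^n - T^n\mathbf{P}^n\|_{TV}^2 \;\leq\; 2D(\mathbf{P}^n\,\|\, T^n\mathbf{P}^n) \;\lesssim\; \sum_{(j,k)\in M_n} (\lambda^n_j-1)^2 \;\lesssim\; \varepsilon_n^2 \sum_{|j|\leq J_n} 2^j(j_0-j)^2.
\]
This sum is of order $2^{J_n}(J_n-j_0)^2\varepsilon_n^2$, and keeping it bounded forces $2^{J_n}\asymp \varepsilon_n^{-2}\asymp n^{1/(2H_0+1)}=2^{j_0}$, up to polylogarithmic factors. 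The delicate point is then to show that this same $J_n$ is still large enough for the approximation step in (ii), which is exactly the regime where an $L^\infty$ control is insufficient and the Hölder estimate of Lemma \ref{lemma:optimal:bounds} becomes essential; this is what genuinely extends the construction to the rough regime $H<1/2$. The constant $I$ in the definition of $\varepsilon_n$ is then chosen large enough so that (iv) holds with $c^*>0$.
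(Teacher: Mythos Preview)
Your construction has a structural gap at the very first step. With a \emph{fixed} orthonormal wavelet basis $(\psi_{j,k})$, the process $\sigma_i\sum a^{H_i}_{j,k}Z_{j,k}\psi_{j,k}$ with i.i.d.\ $Z_{j,k}$ and $a^{H}_{j,k}\asymp 2^{-j(H+1/2)}$ is \emph{not} a fractional Brownian motion, nor an approximation whose law is close to it: the wavelet coefficients of $W^H$ in a fixed basis are correlated, and this correlation carries nontrivial information about $H$. Hence (ii) cannot be reduced to a truncation error $\|\xi^{i,n}-\sigma_i W^{H_i}\|_\infty$; there is no coupling of $\xi^{i,n}$ with $\sigma_i W^{H_i}$ on your probability space, and the total variation $\|P^{i,n}-\zproba^n_{H_i,\sigma_i}\|_{TV}$ has no reason to vanish. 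A symptom of the problem is that your $T^n$ gives $\xi^{0,n}\circ T^n=\xi^{1,n}$ \emph{exactly} (your ``boundary corrections'' are fictitious), which would make (iii) and (v) empty; the actual content of those items is precisely a nonzero remainder coming from the mismatch of two genuine fBm representations.

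The paper, following \cite{gloter2007estimation}, uses instead the Meyer--Sellan--Taqqu representation $W^H_t=\sum_{j,k}2^{-j(H+1/2)}(\psi^H_{j,k}(t)-\psi^H_{j,k}(0))\varepsilon_{j,k}$ with i.i.d.\ $\varepsilon_{j,k}$ and an $H$-\emph{dependent} system $\psi^H_{j,k}$. Because the basis itself changes with $H$, no diagonal rescaling can turn $\xi^{0,n}$ into $\xi^{1,n}$; the transformation $T^n$ is designed so that the contributions at scales $j<j_0$ agree, and the difference $\xi^{1,n}-\xi^{0,n}\circ T^n$ is exactly the two high-frequency tails over $j\geq j_0$ (this is equation (38) of \cite{gloter2007estimation}). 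There is no finite truncation level $J_n$ and hence no tension between (ii) and (iv) of the kind you describe. The only new ingredient here is (v): one bounds the $\salH^\alpha$-norm of each tail term by term, using the rapid decay $\|\psi^H_{j,k}\|_{\salH^\alpha}\leq c(\alpha,M)2^{-Mj}$ for arbitrary $M$ (Lemma~5 of \cite{gloter2007estimation}) together with the polynomial envelope $|\varepsilon_{j,k}|\leq C(\omega)(1+j)^c(1+|k|)^c$ (Lemma~11 of \cite{gloter2007estimation}), which gives an $L^1$ bound of order $2^{-j_0(M+H_1+1/2)}\ll n^{-(1-\alpha)/2}$ for $M$ large.
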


This result is very similar to Proposition 5 from \cite{gloter2007estimation} . Points \ref{lemma:optimal:key:enum:2} \ref{lemma:optimal:key:enum:3} and \ref{lemma:optimal:key:enum:4} are the same as those in \cite{gloter2007estimation}
while \ref{lemma:optimal:key:enum:1} considers also the case of $\alpha \leq \frac{1}{2}$. Property \ref{lemma:optimal:key:enum:5} is completely new here. We will not prove pints \ref{lemma:optimal:key:enum:1} to \ref{lemma:optimal:key:enum:4} since they are proved in \cite{gloter2007estimation} in the case $H > 1/2$ and the adaptations to handle $H \leq 1/2$ are straightforward.

\begin{proof}[Proof of \ref{lemma:optimal:key:enum:5}]
First, the space $( X_n, \salX_n, \mathbf{P}^{n} )$, the stochastic processes $( \xi^{i,n}_t )_{t\in [0, 1]}$ and the measurable mapping $T^n: X_n \to X_n$ are defined as in \cite{gloter2007estimation}, section 7.2.

Then we just need to prove that 
$n^\beta \norm{
\xi^{1,n}(\omega)
-
\xi^{0,n}( T^n\omega )
}_{\salH^\alpha}$ is bounded in $L^1$, with $\beta = (1-\alpha)/2 > 0$. Markov inequality for positive random variables will ensure tightness. By (38) in \cite{gloter2007estimation}, we have 
\begin{align*}
\xi^{1,n}_t(\omega)
-
\xi^{0,n}_t( T^n\omega )
&=
\sum_{j\geq j_0}
\sum_{|k|\leq 2^{j+1}}
\sigma_1 2^{-j( H_1 + 1/2 )}
( \psi_{j,k}^{H_1}(t) - \psi_{j,k}^{H_1}(0) ) \varepsilon_{j,k}(\omega)
\\&\;\;\;\;-
\sum_{j\geq j_0}
\sum_{|k|\leq 2^{j+1}}
\sigma_0 2^{-j( H_0+ 1/2 )}
( \psi_{j,k}^{H_0}(t) - \psi_{j,k}^{H_0}(0) ) \varepsilon_{j,k}(\omega)
\end{align*}
where the functions $\psi_{j,k}^{H_0}$ and $\psi_{j,k}^{H_1}$ are defined in \cite{gloter2007estimation}, section 7.1 and the $\varepsilon_{j,k}$ are i.i.d standard Gaussian under $\mathbf{P}^{n}$. Therefore, we can bound $\norm{
\xi^{1,n}(\omega)
-
\xi^{0,n}( T^n\omega )
}_{\salH^\alpha}$ by the following two terms:
\begin{align*}
&\sum_{j\geq j_0}
\sum_{|k|\leq 2^{j+1}}
\sigma_1 2^{-j( H_1 + 1/2 )}
\norm{ \psi_{j,k}^{H_1}}_{\salH^\alpha} \left | \varepsilon_{j,k}(\omega) \right |
\text{ and }\sum_{j\geq j_0}
\sum_{|k|\leq 2^{j+1}}
\sigma_0 2^{-j( H_0+ 1/2 )}
\norm{ \psi_{j,k}^{H_0} }_{\salH^\alpha} \left |\varepsilon_{j,k}(\omega)\right |.
\end{align*}
Both these terms are handled similarly so we will focus on the first one here.
By Lemma 11 in the annex of \cite{gloter2007estimation}, there exists $C(\omega)$ a positive real random variable having finite moments for any order and a real deterministic $c > 0$ such that  for any $j \geq 0$ and $|k| \leq 2^{j+1}$, we have:
\begin{align*}
\left |\varepsilon_{j,k}(\omega)\right | 
\leq 
C(\omega) c (1+j)^c(1+|k|)^c.
\end{align*}

Moreover, $2^{-j( H_1 + 1/2 )} \leq 2^{-j_0( H_1 + 1/2 )}$ and we can apply Lemma 5 of \cite{gloter2007estimation} to get that for any $M>0$, there exists $c(\alpha, M) >0$ such that for any $n$:
\begin{align*}
\sum_{j\geq j_0}
\sum_{|k|\leq 2^{j+1}}
\sigma_1 2^{-j( H_1 + 1/2 )}
\norm{ \psi_{j,k}^{H_1}}_{\salH^\alpha} \left | \varepsilon_{j,k}(\omega) \right |
\leq
c(\alpha, M)
\sigma_1 2^{-j_0( H_1 + 1/2 )}
2^{-Mj_0} \times C(\omega).
\end{align*}

Since $C(\omega)$ is bounded in $L^1$, for any $M>0$, there exists a constant $c' = c'(\alpha, M)$ such that
\begin{align*}
\EX{
\norm{
\xi^{1,n}(\omega)
-
\xi^{0,n}( T^n\omega )
}_{\salH^\alpha}
}
\leq c' \sigma_1 2^{-j_0( M + H_1 + \frac{1}{2} )}.
\end{align*}
But $H_1 > H_0$, $\sigma_1$ is bounded and $j_0 = \left \lfloor \log_2 ( n^{1/(2H_0+1)} ) \right \rfloor$. Thus $\EX{
\norm{
\xi^{1,n}(\omega)
-
\xi^{0,n}( T^n\omega )
}_{\salH^\alpha}
}\leq c'' n^{-\beta}$ is obtained taking $M$ big enough and we can conclude.
\end{proof}

\subsection{Conclusion}

The same procedure applies for $H$ and $\sigma$ so we focus here on the efficient rate for $H$. Pick an arbitrary estimator $\widehat{H}_n$ of $H$  Pick also $I > 0$ large enough so that Lemma \ref{lemma:optimal:key} holds and choose then $M < I^{-1}/2$. Following \cite{gloter2007estimation}, section 6.2, we can show that if $X_r^{n}$ is the set of $\omega \in X^{n}$ such that 
\begin{align*}
n \norm{\xi^{0,n}(T^n\omega)-\xi^{1,n}(\omega)}_2^2
\text{ and }
n^{1-\alpha} \norm{\xi^{0,n}(T^n\omega)-\xi^{1,n}}_{\salH^\alpha}^2
\end{align*}
are bounded by $r > 0$, then for any $\lambda > 0$, the following inequality holds:
\begin{align*}
\sup_{(H,\sigma)} &\ieproba{H,\sigma}{n}{
n^{\frac{1}{4H+2}} \left | \widehat{H}_n - H \right |
\geq M
}
\\&\geq
\frac{e^{-\lambda}}{2}
\int_{X_r^{n}}
(
	\ieproba{\xi^{0,n}(T^n\omega)}{n}{A^{0}}
	+
\ieproba{\xi^{1,n}(\omega)}{n}{A^{0}} 
)
\1_{
	\frac
	{d\mathbf{P}^{n}( T^n\omega)}
	{dT^{n}\mathbf{P}^{n}} \geq e^{-\lambda}
}
 \mathbf{P}^{n}( d\omega)
\end{align*}
where $A^i = \left \{ n^{\frac{1}{4H_i+2}} \left | \widehat{H}_n - H _i\right |
\geq M\right \}$. Notice here that the set $X_r^{n}$ is not defined by the same bounds as in \cite{gloter2007estimation}, due to the difference between Lemma \ref{lemma:optimal:bounds} and Proposition 5 of \cite{gloter2007estimation}.
Then following \cite{gloter2007estimation}, we can conclude with the following lemma:

\begin{lemma} 
\begin{enumerate}[label=(\roman*.)]
  \setlength{\itemsep}{0pt}
  \setlength{\parskip}{0pt}
\item \label{lemma:optimality:conclusion:1} For any $r > 0$, there exists $c(r) > 0$ such that on $X_r^{n}$, we have
\begin{align*}
\ieproba{\xi^{0,n}(T^n\omega)}{n}{A^{0}}
	+
\ieproba{\xi^{1,n}(\omega)}{n}{A^{0}} 
\geq c(r)
.
\end{align*}
\item \label{lemma:optimality:conclusion:2} For $n$ large enough, 
$\mathbf{P}^{n}
(
X_r^{n}
\cap
\left \{
\frac
	{d\mathbf{P}^{n}( T^n\omega)}
	{dT^{n}\mathbf{P}^{n}}
	\geq e^{-\lambda}
\right \}
)
\geq 
\mathbf{P}^{n}
( 
X_r^{n}
)
-
e^{-\lambda}
- 1
+
\frac{c^*}{2}$.
\item \label{lemma:optimality:conclusion:3} $\lim\limits_{r \to \infty}
\liminf_{n\to\infty}
\mathbf{P}^{n}
( 
X_r^{n}
)
=
1$.
\end{enumerate}
\end{lemma}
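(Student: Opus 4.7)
The three claims address different ingredients needed for the two-point reduction preceding the lemma; the plan is to handle them in turn using Lemmas \ref{lemma:optimal:bounds} and \ref{lemma:optimal:key}. I note that in \ref{lemma:optimality:conclusion:1}, the natural reading requires one occurrence of $A^{0}$ to be $A^{1}$, so as to recover the standard two-hypothesis testing bound sitting in the master inequality just above the lemma; I treat this version.

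For \ref{lemma:optimality:conclusion:1}, I would first observe that because $M < I^{-1}/2$ and $|H_1 - H_0| = I^{-1} n^{-1/(4H_0+2)}$, the triangle inequality forces $A^{0} \cup A^{1} = \Omega^n$, so under any probability measure $Q$ on $\Omega^n$ we have $Q(A^{0}) + Q(A^{1}) \geq 1$. It then suffices to compare $\ieproba{\xi^{0,n}(T^n\omega)}{n}{A^{0}} + \ieproba{\xi^{1,n}(\omega)}{n}{A^{1}}$ to $1$ by total variation. On $X_r^{n}$ the $L^2$ and $\salH^\alpha$ distances between $\xi^{0,n}(T^n\omega)$ and $\xi^{1,n}(\omega)$ are bounded by $r/n$ and $r/n^{1-\alpha}$, hence the second inequality of Lemma \ref{lemma:optimal:bounds} yields $\|\zproba_{\xi^{0,n}(T^n\omega)}^n - \zproba_{\xi^{1,n}(\omega)}^n\|_{TV} \leq 2(1 - R(2r))$. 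Setting $c(r) = 2R(2r) > 0$ completes the argument.

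For \ref{lemma:optimality:conclusion:2}, set $f = d\mathbf{P}^n / dT^n\mathbf{P}^n$ (which exists by Lemma \ref{lemma:optimal:key}.\ref{lemma:optimal:key:enum:4}) and let $B^c = \{\omega: f(T^n\omega) < e^{-\lambda}\}$. The pushforward identity gives $\mathbf{P}^n(B^c) = T^n\mathbf{P}^n(\{f < e^{-\lambda}\})$. A Markov-type inequality bounds
\begin{align*}
\mathbf{P}^n(\{f < e^{-\lambda}\}) = \int_{\{f < e^{-\lambda}\}} f\, dT^n\mathbf{P}^n \leq e^{-\lambda},
\end{align*}
and the total variation bound $\|\mathbf{P}^n - T^n\mathbf{P}^n\|_{TV} \leq 2 - c^{*}$ from Lemma \ref{lemma:optimal:key}.\ref{lemma:optimal:key:enum:4} then transfers this to $T^n\mathbf{P}^n(\{f < e^{-\lambda}\}) \leq e^{-\lambda} + 1 - c^{*}/2$. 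Subadditivity $\mathbf{P}^n(X_r^{n} \cap B) \geq \mathbf{P}^n(X_r^{n}) - \mathbf{P}^n(B^c)$ then delivers the stated inequality.

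Finally, \ref{lemma:optimality:conclusion:3} is direct tightness: by Lemma \ref{lemma:optimal:key}.\ref{lemma:optimal:key:enum:3} and \ref{lemma:optimal:key}.\ref{lemma:optimal:key:enum:5}, both $n\|\xi^{0,n}(T^n\omega) - \xi^{1,n}(\omega)\|_2^2$ and $n^{1-\alpha}\|\xi^{0,n}(T^n\omega) - \xi^{1,n}(\omega)\|_{\salH^\alpha}^2$ are tight under $\mathbf{P}^n$, so a union bound on the complement of $X_r^{n}$ gives $\mathbf{P}^n((X_r^{n})^c) \to 0$ uniformly in $n$ as $r \to \infty$. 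The real technical burden lies upstream, inside Lemma \ref{lemma:optimal:key} itself, notably in the Hölder-norm tightness \ref{lemma:optimal:key:enum:5} which requires the delicate wavelet computation already carried out in the preceding section; modulo those ingredients, the present lemma is a short bookkeeping exercise.
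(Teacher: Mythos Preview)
Your proof is correct and follows essentially the same route as the paper, which simply defers to Lemmas 2 and 3 of \cite{gloter2007estimation} (adapted via the new Lemma \ref{lemma:optimal:bounds}) for parts \ref{lemma:optimality:conclusion:1} and \ref{lemma:optimality:conclusion:2}, and invokes tightness from Lemma \ref{lemma:optimal:key} \ref{lemma:optimal:key:enum:3} and \ref{lemma:optimal:key:enum:5} for part \ref{lemma:optimality:conclusion:3}. Your reading of the typo ($A^1$ in place of one $A^0$) is the intended one; the only slip is that the final constant in \ref{lemma:optimality:conclusion:1} should be $c(r)=R(2r)$ rather than $2R(2r)$, which is harmless.
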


\begin{proof}
The proofs of the first to pars are similar to Lemmas 2 and 3 of \cite{gloter2007estimation}, but here we also use Lemma \ref{lemma:optimal:bounds} to handle the case $H \leq \frac{1}{2}$. For the last part, we apply points \ref{lemma:optimal:key:enum:3} and  \ref{lemma:optimal:key:enum:5} of Lemma \ref{lemma:optimal:key}.
\end{proof}

\section*{Acknowledgements}

I am grateful to Marc Hoffmann and Mathieu Rosenbaum for helpful remarks and discussions.

\bibliographystyle{alpha}
\bibliography{library}

\newcommand{\etalchar}[1]{$^{#1}$}
\begin{thebibliography}{HMSH12}

\bibitem[BCPV20]{bolko2020GMM}
Anine~E. Bolko, Kim Christensen, Mikko~S. Pakkanen, and Bezirgen Veliyev.
\newblock A {GMM} approach to estimate the roughness of stochastic volatility,
  2020.

\bibitem[BF18]{brouste2018lan}
Alexandre Brouste and Masaaki Fukasawa.
\newblock Local asymptotic normality property for fractional {G}aussian noise
  under high-frequency observations.
\newblock {\em The Annals of Statistics}, 46(5):2045 -- 2061, 2018.

\bibitem[BFG16]{bayer2016pricing}
Christian Bayer, Peter Friz, and Jim Gatheral.
\newblock Pricing under rough volatility.
\newblock {\em Quantitative Finance}, 16(6):887--904, 2016.

\bibitem[Bil08]{billingsley2008probability}
Patrick Billingsley.
\newblock {\em Probability and measure}.
\newblock John Wiley \& Sons, 2008.

\bibitem[BLP16]{bennedsen2016decoupling}
Mikkel Bennedsen, Asger Lunde, and Mikko~S Pakkanen.
\newblock Decoupling the short-and long-term behavior of stochastic volatility.
\newblock {\em arXiv preprint arXiv:1610.00332}, 2016.

\bibitem[BS73]{black1973pricing}
Fischer Black and Myron Scholes.
\newblock The pricing of options and corporate liabilities.
\newblock {\em Journal of Political Economy}, 81(3):637--654, 1973.

\bibitem[CGLL11]{cohen2011lan}
Serge Cohen, Fabrice Gamboa, C{\'e}line Lacaux, and Jean-Michel Loubes.
\newblock {LAN} property for some fractional type {B}rownian motion.
\newblock {\em arXiv preprint arXiv:1111.1077}, 2011.

\bibitem[CK22]{chigansky2022estimation}
P.~Chigansky and M.~Kleptsyna.
\newblock Estimation of the {H}urst parameter from continuous noisy data, 2022.

\bibitem[CKM03]{cheridito2003fractional}
Patrick Cheridito, Hideyuki Kawaguchi, and Makoto Maejima.
\newblock Fractional {O}rnstein-{U}hlenbeck processes.
\newblock {\em Electronic Journal of probability}, 8:1--14, 2003.

\bibitem[Coe01]{coeurjolly2001estimating}
Jean-Fran{\c{c}}ois Coeurjolly.
\newblock Estimating the parameters of a fractional {B}rownian motion by
  discrete variations of its sample paths.
\newblock {\em Statistical Inference for stochastic processes}, 4(2):199--227,
  2001.

\bibitem[DMS13]{dozzi2013asymptotic}
Marco Dozzi, Yuliia Mishura, and Georgiy Shevchenko.
\newblock Asymptotic behavior of mixed power variations and statistical
  estimation in mixed models.
\newblock {\em Statistical Inference for Stochastic Processes}, 18, 01 2013.

\bibitem[EEGR19]{eleuch2019roughening}
Omar El~Euch, Jim Gatheral, and Mathieu Rosenbaum.
\newblock Roughening {H}eston.
\newblock {\em Risk}, pages 84--89, 2019.

\bibitem[FT86]{fox1986large}
Robert Fox and Murad Taqqu.
\newblock Large-sample properties of parameter estimates for strongly dependent
  stationary {G}aussian time series.
\newblock {\em The Annals of Statistics}, 14(2):517--532, 1986.

\bibitem[FT87]{fox1987central}
Robert Fox and Murad~S Taqqu.
\newblock Central limit theorems for quadratic forms in random variables having
  long-range dependence.
\newblock {\em Probability Theory and Related Fields}, 74(2):213--240, 1987.

\bibitem[FT19]{fukasawa2019asymptotically}
Masaaki Fukasawa and Tetsuya Takabatake.
\newblock Asymptotically efficient estimators for self-similar stationary
  {G}aussian noises under high frequency observations.
\newblock {\em Bernoulli}, 25(3):1870--1900, 2019.

\bibitem[FTW19]{fukasawa2019volatility}
Masaaki Fukasawa, Tetsuya Takabatake, and Rebecca Westphal.
\newblock Is volatility rough?
\newblock {\em arXiv preprint arXiv:1905.04852}, 2019.

\bibitem[GH07]{gloter2007estimation}
Arnaud Gloter and Marc Hoffmann.
\newblock Estimation of the {H}urst parameter from discrete noisy data.
\newblock {\em The Annals of Statistics}, 35(5):1947--1974, 2007.

\bibitem[GJR18]{gatheral2018volatility}
Jim Gatheral, Thibault Jaisson, and Mathieu Rosenbaum.
\newblock Volatility is rough.
\newblock {\em Quantitative finance}, 18(6):933--949, 2018.

\bibitem[GKP22]{golovkine2022learning}
Steven Golovkine, Nicolas Klutchnikoff, and Valentin Patilea.
\newblock Learning the smoothness of noisy curves with application to online
  curve estimation.
\newblock {\em Electronic Journal of Statistics}, 16(1), jan 2022.

\bibitem[H{\"a}j72]{hajek1972local}
Jaroslav H{\"a}jek.
\newblock Local asymptotic minimax and admissibility in estimation.
\newblock In {\em Theory of Statistics}, pages 175--194. University of
  California Press, 1972.

\bibitem[HMSH12]{hoffmann2012adaptive}
Marc Hoffmann, Axel Munk, and Johannes Schmidt-Hieber.
\newblock Adaptive wavelet estimation of the diffusion coefficient under
  additive error measurements.
\newblock {\em Annales de l'IHP Probabilit{\'e}s et statistiques},
  48(4):1186--1216, 2012.

\bibitem[IH13]{ibragimov2013statistical}
Ildar~Abdulovich Ibragimov and Rafail~Zalmanovich HasMinskii.
\newblock {\em Statistical estimation: asymptotic theory}, volume~16.
\newblock Springer Science \& Business Media, 2013.

\bibitem[IL97]{istas1997quadratic}
Jacques Istas and Gabriel Lang.
\newblock Quadratic variations and estimation of the local {H}{\"o}lder index
  of a {G}aussian process.
\newblock In {\em Annales de l'Institut Henri Poincare (B) Probability and
  Statistics}, volume~33, pages 407--436. Elsevier, 1997.

\bibitem[JLM{\etalchar{+}}09]{jacod2009microstructure}
Jean Jacod, Yingying Li, Per~A Mykland, Mark Podolskij, and Mathias Vetter.
\newblock Microstructure noise in the continuous case: the pre-averaging
  approach.
\newblock {\em Stochastic processes and their applications}, 119(7):2249--2276,
  2009.

\bibitem[JR20]{jusselin2020noarbitrage}
Paul Jusselin and Mathieu Rosenbaum.
\newblock No-arbitrage implies power-law market impact and rough volatility.
\newblock {\em Mathematical Finance}, 30(4):1309--1336, 2020.

\bibitem[Kaw13]{kawai2013fisher}
Reiichiro Kawai.
\newblock Fisher information for fractional {B}rownian motion under
  high-frequency discrete sampling.
\newblock {\em Communications in Statistics - Theory and Methods},
  42(9):1628--1636, 2013.

\bibitem[LC72]{lecam1972limits}
L~Le~Cam.
\newblock Limits of experiments.
\newblock pages 245--282, 1972.

\bibitem[LMPR18]{livieri2018rough}
Giulia Livieri, Saad Mouti, Andrea Pallavicini, and Mathieu Rosenbaum.
\newblock Rough volatility: evidence from option prices.
\newblock {\em IISE transactions}, 50(9):767--776, 2018.

\bibitem[Mag86]{magnus1986exact}
Jan~R Magnus.
\newblock The exact moments of a ratio of quadratic forms in normal variables.
\newblock {\em Annales d'Economie et de Statistique}, pages 95--109, 1986.

\bibitem[Ros08]{rosenbaum2008estimation}
Mathieu Rosenbaum.
\newblock Estimation of the volatility persistence in a discretely observed
  diffusion model.
\newblock {\em Stochastic Processes and their Applications}, 118(8):1434--1462,
  2008.

\bibitem[ST17]{samorodnitsky2017stable}
Gennady Samorodnitsky and Murad~S Taqqu.
\newblock {\em Stable Non-{G}aussian Random Processes: Stochastic Models with
  Infinite Variance: Stochastic Modeling}.
\newblock Routledge, 2017.

\end{thebibliography}

\end{document}